\numberwithin{equation}{subsection}
\numberwithin{equation}{section}
\newcommand\numberthis{\addtocounter{equation}{1}\tag{\theequation}}
\newcommand{\nc}{\newcommand}
\newcommand{\rnc}{\renewcommand}
\nc{\R}{\mathbb R}
\nc{\C}{\mathbb C}
\nc{\N}{\mathbb N}
\nc{\Z}{\mathbb Z}
\nc{\Q}{\mathbb Q}
\rnc{\P}{\mathbb P}
\nc{\F}{\mathbb F}
\nc{\Frac}{\mathrm{Frac}}
\rnc{\O}{\mathcal O}
\nc{\Tr}{\mathrm{Tr}}
\nc{\rmU}{\mathrm{U}}
\nc{\tor}{\mathrm{tor}}
\newcommand\cB{{\mathcal B}}
\newcommand\cF{{\mathcal F}}
\newcommand\cH{{\mathcal H}}
\newcommand\cJ{{\mathcal J}}
\newcommand\cL{{\mathcal L}}
\newcommand\cN{{\mathcal N}}
\newcommand\cO{{\mathcal O}}
\newcommand\cZ{{\mathcal Z}}
\newcommand\gram{{\mathrm{Gram}}}
\newcommand\fD{{\mathfrak D}}
\newcommand\fJ{{\mathfrak J}}
\newcommand\bD{{\mathbb D}}
\newcommand\bfG{{\mathbf G}}
\newcommand\bfP{{\mathbf P}}
\newcommand{\Hom}{\mathrm{Hom}}
\newcommand{\GL}{\mathrm{GL}}
\newcommand{\trace}{\mathrm{Tr}}
\newcommand{\Res}{\mathrm{Res}}
\newcommand{\QHerm}{\mathrm{QHerMod}}
\renewcommand{\P}{\mathbb P}
\theoremstyle{plain}
\newtheorem{theorem}{Theorem}[section]
\newtheorem{proposition}[theorem]{Proposition}
\newtheorem{lemma}[theorem]{Lemma}
\newtheorem{corollary}[theorem]{Corollary}
\newtheorem{conjecture}[theorem]{Conjecture}
\newtheorem{definition}[theorem]{Definition}
\newtheorem{assumption}[theorem]{Assumption}
\theoremstyle{remark}
\newtheorem{remarque}[theorem]{Remark}
\newtheorem{example}[theorem]{Example}
\title[Cohomological Kudla conjecture]{The cohomological Kudla conjecture for unitary Shimura varieties}
\author{Fran\c{c}ois Greer}
\address{Department of Mathematics, Michigan State University, 619 Red Cedar Rd, East Lansing, MI 48824}
\email{greerfra@msu.edu}
\author{Salim Tayou}
\address{Department of Mathematics, Dartmouth College, Kemeny Hall, Hanover, NH 03755, USA}
\email{salim.tayou@dartmouth.edu}
\date{\today}
\begin{document}

\begin{abstract}
    We construct natural extensions of the Kudla--Millson generating series of cohomology classes of special cycles in compactified unitary Shimura varieties of signature $(n+1,1)$ and prove that they are holomorphic Hermitian modular forms. This proves the cohomological version of a conjecture of Kudla and Bruinier--Rosu--Zemel, in all codimensions up to the middle. We also develop the theory of Hermitian quasi-modular forms, with a particular focus on polynomial weighted theta functions, and prove that the generating series of Zariski closures of special cycles is a Hermitian quasi-modular form.
\end{abstract}

\maketitle
\setcounter{tocdepth}{1}
\tableofcontents

\section{Introduction}

In their seminal work \cite{kudla-millson}, Kudla and Millson introduced a geometric version of the classical theta lift considered by Siegel and Weil. They defined generating series of special cycles on locally symmetric spaces of orthogonal and unitary type and proved that these series are Fourier expansions of modular forms. Since the locally symmetric spaces involved are typically non-compact, it is natural to seek an extension of this modularity to compactifications. In the case where the underlying algebraic group is $\mathrm O(n,2)$ or $\mathrm U(a,b)$, the locally symmetric space is a Shimura variety, which is a quasi-projective algebraic variety by \cite{bailyborel}. Several recent results elucidate how the modularity phenomenon survives upon taking the closures of special cycles. For cycles of codimension one, there are boundary contributions which are either quasi-modular or mixed mock modular forms \cite{engel-greer-tayou}. In this paper, we generalize this behavior to higher codimension special cycles associated to the unitary groups $\mathrm U(n+1,1)$. The Shimura variety is a complex ball quotient in this case.

Our study features a natural class of compactifications of Shimura varieties called \emph{toroidal compactifications}, first introduced in \cite{amrt}. In the case of orthogonal Shimura varieties, the following conjecture was originally posed by Kudla \cite[Problem 3]{kudla_special_2004}, see also the recent reformulation of Bruinier-Rosu-Zemel \cite{bruinier-rosu-zemel}. 

\begin{conjecture}\label{main-conjecture}
Let $X_\Gamma^\Sigma$ be a toroidal compactification of an orthogonal type Shimura variety, resp. a unitary type Shimura variety of signature $(n+1,1)$. There exist boundary corrections to the generating series of special cycles of codimension $g\geq 1$ such that the resulting series is a holomorphic Siegel, resp. Hermitian,  modular form, valued in $\mathrm{CH}^g(X_\Gamma^\Sigma)$.
\end{conjecture}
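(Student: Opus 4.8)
The plan is to establish modularity \emph{directly in the Chow group}, since the cohomological statement does not suffice on its own: the cycle class map $\mathrm{CH}^g(X_\Gamma^\Sigma)\otimes\C\to H^{2g}(X_\Gamma^\Sigma)$ generally has nontrivial kernel, so a linear relation among the classes $[\widehat Z(T)]$ in cohomology need not be a relation modulo rational equivalence. I would therefore follow, in the Hermitian and compactified setting, the strategy pioneered by Bruinier and Raum in the orthogonal non-compact case: reduce genuine modularity to the \emph{formal} modularity of the Fourier--Jacobi expansion, a statement that can be formulated and verified entirely with coefficients in $\mathrm{CH}^g$. Concretely, for each positive semidefinite Hermitian matrix $T$ of size $g$ I fix the corrected cycle $\widehat Z(T)\in\mathrm{CH}^g(X_\Gamma^\Sigma)$ built from the Zariski closure of the open special cycle together with the boundary corrections prescribed by the toroidal data $\Sigma$, and assemble the formal series $\widehat Z_g(\tau)=\sum_T \widehat Z(T)\,q^T$, with $\tau$ in the Hermitian upper half space $\H_g$. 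The target is to prove that $\widehat Z_g(\tau)$ is the Fourier expansion of a holomorphic Hermitian modular form of the appropriate weight valued in $\mathrm{CH}^g(X_\Gamma^\Sigma)\otimes\C$.

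The heart of the argument is an induction on the genus $g$ through the Fourier--Jacobi expansion. Collecting the Fourier coefficients according to the last diagonal variable gives a formal expansion $\widehat Z_g(\tau)=\sum_{m\ge 0}\phi_m\,q_g^{m}$, where each $\phi_m$ depends on the remaining Hermitian variables, and I would show that every $\phi_m$ is a Chow-valued \emph{Hermitian Jacobi form}. This is where the geometry enters: the natural maps relating codimension-$g$ data to codimension-$(g-1)$ data --- intersection with a fixed special divisor, respectively pullback along an embedded sub-ball-quotient of signature $(n,1)$ --- express the Fourier--Jacobi coefficients $\phi_m$ in terms of the genus-$(g-1)$ corrected series tensored with theta kernels. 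By the inductive hypothesis these lower-genus series are modular, and one must track how the boundary corrections interact with the restriction maps, matching the quasi-modular boundary terms of the codimension-one theory of \cite{engel-greer-tayou} against the corrections imposed here, so as to conclude that the $\phi_m$ satisfy the Jacobi transformation law. The base case $g=1$ is modularity of the corrected generating series of special divisors in $\mathrm{CH}^1(X_\Gamma^\Sigma)=\Pic(X_\Gamma^\Sigma)$, which I would extract from the Borcherds--Bruinier divisor theory, the explicit boundary corrections being exactly what converts the a priori quasi-modular series into a genuine modular one.

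The final step is to pass from modularity of all Fourier--Jacobi coefficients to genuine modularity of $\widehat Z_g(\tau)$: a \emph{formal Fourier--Jacobi series whose coefficients are Hermitian Jacobi forms, subject to the natural symmetry conditions, is the expansion of a true Hermitian modular form}. I would prove this Hermitian analogue of the Bruinier--Raum theorem by interpreting formal Fourier--Jacobi series as formal sections along the rank-one boundary strata of the Hermitian modular variety $\cA=\rmU(g,g)$-quotient of $\H_g$, and invoking a Koecher principle, the finite dimensionality of spaces of Hermitian Jacobi forms of bounded index, and normality together with codimension estimates for the boundary, to force convergence and representability by an honest modular form. I expect this formal-to-genuine statement, in tandem with the $\mathrm{CH}^1$ base case, to be the main obstacle. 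Unlike the cohomological argument, it cannot be reduced to a finite-dimensional check functional-by-functional, because $\mathrm{CH}^g(X_\Gamma^\Sigma)\otimes\C$ is not known to be finite dimensional; the whole induction must be carried out with coefficients in this potentially infinite-dimensional group, and the symmetry and vanishing conditions needed to apply the formal-modularity theorem must be verified for the corrected cycles themselves rather than for their cohomology classes.
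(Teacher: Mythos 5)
First, be aware of what you are actually proving: the statement is Conjecture~1.1 of the paper, and the paper itself does \emph{not} prove it. The authors establish only its cohomological incarnation (classes in $\mathrm{H}^{2g}(X_\Gamma^\tor,\Q)$ rather than $\mathrm{CH}^g$), only for unitary groups of signature $(n+1,1)$, and only in codimension $g\leq n/2$, by a route entirely different from yours: a splitting lemma in rational homology (Lemma~3.4) that reduces all intersection numbers to computations against the boundary divisors, explicit harmonic Poincar\'e duals of the boundary special cycles on the abelian varieties $E_M$, Shimura's modularity criterion for polynomial-weighted Hermitian theta series, and an $\mathfrak{sl}_2$-projector argument that produces the boundary corrections. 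So there is no proof in the paper to compare yours against; what you have written is a program for the genuinely open Chow-valued statement, which the authors themselves restate as a conjecture at the end of their introduction.

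As a program, yours is the natural one (it is how Bruinier--Westerholt-Raum proceed in the orthogonal open case, and Liu and Xia in the unitary open case), but several of its steps are not known and are not merely technical. (i) The inductive engine --- identifying the Fourier--Jacobi coefficients of the \emph{compactified, corrected} series with genus-$(g-1)$ corrected series on compactified sub-Shimura varieties tensored with Jacobi/theta kernels --- is exactly where the boundary geometry bites: the Zariski closure in $X_\Gamma^\tor$ of an embedded signature-$(n,1)$ ball quotient is in general not a toroidal compactification compatible with the ambient toroidal data, and no one knows which correction terms make the required pullback/intersection identities hold in $\mathrm{CH}^*$; the paper's restriction to $g\leq n/2$ (forced by the splitting lemma) is a symptom of how delicate the interaction of cycles with the boundary is even in cohomology. (ii) The formal-to-genuine step you invoke, a Hermitian analogue of the Bruinier--Raum theorem for formal Fourier--Jacobi series on $\mathrm{U}(g,g)$, is itself open, not an available black box. (iii) The base case is not covered by ``Borcherds--Bruinier divisor theory,'' which concerns the open Shimura variety; what you need is modularity of corrected special divisors in $\Pic(X_\Gamma^\tor)\otimes\Q$ for the \emph{unitary toroidal compactification}, which is not in the literature (the compactified divisor results cited in the paper are for the orthogonal case). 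Finally, your closing worry is misplaced in one respect: the possible infinite-dimensionality of $\mathrm{CH}^g(X_\Gamma^\Sigma)\otimes\C$ is not the obstruction, because once every linear functional applied to the formal series yields a genuine modular form of fixed weight and type, the coefficients automatically span a finite-dimensional subspace (they pair against the finite-dimensional space of such forms); this is precisely how the Chow-valued conclusion is extracted in the orthogonal open case. The real gaps are (i)--(iii), and any honest write-up would have to present them as conjectures or new theorems, not as steps.
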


The goal of this paper is to prove the cohomological incarnation of Conjecture \ref{main-conjecture} in the unitary case, for special cycles of codimension up to the middle, as well as to provide non-holomorphic completions, as we did in our previous work in the orthogonal case for codimension 1 \cite{engel-greer-tayou}. 

\subsection{Main results}

Let $k$ be an imaginary quadratic field with ring of integers $\mathcal{O}_k$ and let $(V,h)$ be a Hermitian vector space over $k$ of signature $(n+1,1)$. Let $L\subset V$ be an $\mathcal{O}_k$-lattice, i.e., $h$ is $\mathcal{O}_k$-valued on $L$, and $L\otimes_{\mathcal{O}_k}k=V$.  One can attach to this data a unitary Shimura variety $X_\Gamma$ whose construction is recalled in \Cref{s:shimura-varieties}. Let $1\leq g\leq n+1$, $\underline\nu\in (L^\vee/L)^{g}$  and let $N\in\mathrm{Herm}_{g}(k)_{\geq 0}$ be a Hermitian semi-positive matrix. For each $(\underline{\nu},N)$, there is a special cycle $\cZ(\underline{\nu},N)\hookrightarrow X_\Gamma$ of codimension $g$. Let
\[\rho_{L,g}: \mathrm{U}(g,g)(\Z) \rightarrow \mathrm{Aut}(\C[(L^\vee/L)^g]) \]
be the genus $g$ Weil representation associated to the Hermitian lattice $(L,h)$, where $L^\vee$ denotes dual lattice of $L$ with respect to the quadratic form $\trace_{L/K}(h)$. Consider the generating series:
\begin{align}\label{gen-series-chow}
\sum_{\underset{\underline{\nu}\in (L^\vee/L)^g}{N\in \mathrm{Herm}_{g}(k)_{\geq 0}}} [\cZ(\underline{\nu},N)]q^N \mathfrak{e}_{\underline\nu}\in \mathrm{CH}^g(X_\Gamma)\otimes \C[(L^\vee/L)^g] \llbracket q \rrbracket~,
\end{align}
where $\tau\in \cH_g$ is an element of the Hermitian upper half-space and $q^N=e^{2i\pi \trace(\tau N  )}$. In this setting, with values in $\mathrm{CH}^g(X_\Gamma)$, the convergence of the series above is still open; the analogous issue was settled in the orthogonal case by Bruinier-Westerholt-Raum \cite{Bruinier-Westerholt-Raum}. For the rest of this paper, we will work with generating series valued in cohomology and we denote by $\Phi_L^g$ the series obtained by taking cohomology classes in \Cref{gen-series-chow}.
\medskip

There is a unique toroidal compactification $X_\Gamma \subset X_\Gamma^\tor$, and we denote by $\overline{\cZ}(\underline{\nu},N)$ the Zariski closure of the special cycle $\cZ(\underline{\nu},N)$ in $X^\tor_\Gamma$. Our first main result is the following.

\begin{theorem}\label{t:main-inexplicit-form}
Assume that $g\leq\frac{n}{2}$. There exists a correction $\widetilde{\cZ}(\underline{\nu},N)\subset X^\tor_\Gamma$ to each $ \overline{\cZ}(\underline{\nu},N)$, by a cycle supported in the boundary $X^\tor_\Gamma\setminus X_\Gamma$, such that the generating series: 

\[\widetilde{\Phi}_L^g(\tau) = \sum_{\underset{\underline{\nu}\in (L^\vee/L)^g}{N\in \mathrm{Herm}_{g}(k)_{\geq 0}}} [\widetilde{\cZ}(\underline{\nu},N)] q^N \mathfrak e_{\underline\nu}\in \mathrm{H}^{2g}\left(X^\tor_\Gamma \right)\otimes \C[(L^\vee/L)^g] \llbracket q \rrbracket~.\]
is a Hermitian modular form of weight $n+2$ and representation $\rho_{L,g}$ with respect to
$\mathrm{U}(g,g)(\Z)$.  
\end{theorem}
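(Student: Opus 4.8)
The plan is to prove modularity not at the level of cycle classes directly, but by realizing the corrected series as the cohomology class of a single closed current on $X^\tor_\Sigma$ obtained by extending the Kudla--Millson theta form across the boundary, and then reading off the transformation law from the theta machinery. \textbf{Interior modularity.} On the open variety $X_\Gamma$ I would begin with the Kudla--Millson Schwartz form $\varphi_{\mathrm{KM}}$ attached to the Hermitian space of signature $(n+1,1)$ and the associated theta form $\Theta_{\mathrm{KM}}(\tau)$, a closed $\C[(L^\vee/L)^g]$-valued $2g$-form on $X_\Gamma$ depending on $\tau\in\cH_g$. By Kudla--Millson, the de Rham class $[\Theta_{\mathrm{KM}}(\tau)]\in \mathrm{H}^{2g}(X_\Gamma)\otimes\C[(L^\vee/L)^g]$ has $q$-expansion $\Phi_L^g(\tau)$, the coefficient of $q^N\mathfrak{e}_{\underline\nu}$ being $[\cZ(\underline\nu,N)]$, and by the transformation law of theta kernels $\Theta_{\mathrm{KM}}(\tau)$ transforms as a Hermitian modular form of weight $n+2$ and type $\rho_{L,g}$ under $\mathrm{U}(g,g)(\Z)$. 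Thus modularity already holds in $\mathrm{H}^{2g}(X_\Gamma)$, and the entire difficulty lies in the non-compactness of $X_\Gamma$.

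\textbf{Boundary asymptotics and construction of the correction.} The core of the argument is to analyze the asymptotics of $\Theta_{\mathrm{KM}}(\tau)$ along each toroidal boundary divisor $T_c$, where $c$ ranges over the finitely many cusps, each corresponding to an isotropic $k$-line in $V$; since the attached self-adjoint cone has rank one, the fan $\Sigma$ and hence $X^\tor_\Sigma$ are canonical, which is what makes the compactification unique. I would show that the non-decaying part of $\Theta_{\mathrm{KM}}(\tau)$ near $T_c$ is governed by the theta form of the positive-definite Hermitian lattice of rank $n$ attached to the cusp $c$, and package these leading terms into an explicit boundary theta form $\Theta_\partial(\tau)$ supported near $\partial X^\tor_\Sigma$, chosen so that $\widehat\Theta(\tau):=\Theta_{\mathrm{KM}}(\tau)-\Theta_\partial(\tau)$ extends to a closed current with $L^1$, controlled singularities across the boundary. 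Since $\Theta_\partial$ is assembled from theta kernels of the cusp lattices, it again transforms as a Hermitian modular form of weight $n+2$ and type $\rho_{L,g}$, so $\widehat\Theta(\tau)$ does too. I then define $[\widetilde\cZ(\underline\nu,N)]$ as the $q^N\mathfrak{e}_{\underline\nu}$-coefficient of $[\widehat\Theta]$; by construction it differs from $[\overline\cZ(\underline\nu,N)]$ by a class supported on $\partial X^\tor_\Sigma$, namely the boundary cycle read off from the corresponding Fourier coefficient of $\Theta_\partial$.

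\textbf{Conclusion and the role of $g\le n/2$.} Because $X^\tor_\Sigma$ is smooth and projective, $\mathrm{H}^{2g}(X^\tor_\Sigma)$ is finite-dimensional, so the cohomology class of the current $\widehat\Theta(\tau)$ lives in the finite-dimensional space $\mathrm{H}^{2g}(X^\tor_\Sigma)\otimes\C[(L^\vee/L)^g]$ and inherits its $\tau$-transformation law from $\widehat\Theta$; matching $q$-expansions identifies this class with $\sum_{\underline\nu,N}[\widetilde\cZ(\underline\nu,N)]\,q^N\mathfrak{e}_{\underline\nu}$, which is therefore a Hermitian modular form of weight $n+2$ and type $\rho_{L,g}$. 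It remains to establish holomorphy, i.e. that no non-holomorphic completion is forced; this is where the hypothesis $g\le n/2$ enters. The boundary corrections are themselves generating series of codimension-$g$ special cycles on the boundary divisors $T_c$, which carry modular Shimura data of one lower dimension, and the condition $g\le n/2$ is exactly the range keeping these corrections \emph{below the middle}, so that their modularity holds inductively and the positive-definite cusp theta series converge absolutely to honest holomorphic modular forms, with none of the regularization defect responsible for the mock/quasi-modular phenomena of the codimension-one case.

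I expect the main obstacle to be the boundary analysis of the second step: computing the precise asymptotic expansion of $\varphi_{\mathrm{KM}}$ along the boundary stratum $T_c$ (and along its interaction with the cycles $\overline\cZ$), and proving simultaneously that the subtracted form $\Theta_\partial$ exactly cancels the non-integrable contribution, so that $\widehat\Theta$ defines a class on $X^\tor_\Sigma$, and that $\Theta_\partial$ is genuinely modular and, for $g\le n/2$, holomorphic. Controlling these two competing requirements at once — enough subtraction to extend the current, but only modular, holomorphic terms subtracted — is the delicate point, and it is precisely the place where the numerical bound on $g$ is consumed.
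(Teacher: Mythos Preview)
Your proposal sketches a current-theoretic approach---extend the Kudla--Millson theta form across the boundary and read off modularity---which is a legitimate strategy in principle, but it is not the paper's method, and as written it has a genuine gap at the holomorphy step.

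The paper proceeds quite differently. It never extends $\Theta_{\mathrm{KM}}$ as a current. Instead it tests the corrected series against homology classes $\alpha\in\mathrm{H}_{2g}(X_\Gamma^{\tor},\Q)$ and uses a \emph{splitting lemma} (Lemma~\ref{greer-lemma}): for $2g\le n$, Hard Lefschetz on the boundary abelian variety implies that every such $\alpha$ decomposes as $\beta+\gamma$ with $\beta$ supported in the open interior and $\gamma$ in the boundary. The pairing with $\beta$ is modular by Kudla--Millson; the pairing with $\gamma$ is computed entirely inside $\cB_{\mathfrak J}\simeq E_M$. There the paper builds an $\mathfrak{sl}_2$-module structure on a space $\cF_{n,\bullet}$ of homogeneous weighting polynomials and shows that the map $\alpha\mapsto(Z(\underline\lambda)\cdot\alpha)$ is $\mathfrak{sl}_2$-equivariant with respect to the Lefschetz action on $\mathrm H^{*,*}(E_M)$. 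The Lefschetz decomposition then isolates the primitive (harmonic) part, whose theta series is a genuine holomorphic Hermitian modular form, and the non-primitive parts are exactly the boundary corrections $W_i^\ell\cup D^{g-\ell}$. This $\mathfrak{sl}_2$-intertwining is the engine that produces holomorphy; your sketch has no analogue of it.

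Concretely, your gap is this: you assert that one can choose $\Theta_\partial$ which (i) cancels the non-integrable asymptotics of $\Theta_{\mathrm{KM}}$, (ii) is modular, and (iii) is holomorphic in $\tau$, and you attribute (iii) to the bound $g\le n/2$ via an ``inductive'' argument on boundary Shimura data. But the boundary divisors here are abelian varieties, not lower-dimensional ball quotients; there is no induction. More importantly, the natural boundary theta series---the generating series of the $Z(\underline\lambda)$ on $E_M$---is \emph{quasi}-modular, not modular (this is exactly Theorem~\ref{t:completions-of-theta-series}). Producing a holomorphic modular correction requires identifying and subtracting the non-harmonic Lefschetz components, which is the content of Theorem~\ref{explicit-main-theorem}. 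Your proposal neither mentions this mechanism nor explains why the subtraction needed for current-extension would coincide with the subtraction needed for holomorphy. Finally, the hypothesis $g\le n/2$ in the paper is consumed by Hard Lefschetz in the splitting lemma, not by convergence or holomorphy of cusp theta series; your account of where the bound enters is incorrect.
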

In particular, this resolves Conjecture \ref{main-conjecture} in cohomology in the case of $\mathrm{U}(n+1,1)$, for $g$ up to the middle codimension.
\medskip

Our second result provides non-holomorphic completions of the series of Zariski closures $\overline{\cZ}(\underline{\nu},N)$ in the spirit of \cite{engel-greer-tayou}, which realizes these objects as cycle-valued Hermitian quasi-modular forms.

\begin{theorem}\label{main-completion}
    The generating series of cycle classes for the Zariski closures $\overline{\cZ}(\underline{\nu},N)$ is a Hermitian quasi-modular form. That is, it admits a non-holomorphic completion which transforms like a Hermitian modular form of weight $n+2$ and representation $\rho_{L,g}$ with respect to
$\mathrm{U}(g,g)(\Z)$.
\end{theorem}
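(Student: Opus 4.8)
The plan is to deduce \Cref{main-completion} from \Cref{t:main-inexplicit-form} by isolating the boundary discrepancy between the Zariski closures and the corrected cycles. By construction the correction differs from $\overline{\cZ}(\underline{\nu},N)$ by a cycle $\cB(\underline{\nu},N):=\widetilde{\cZ}(\underline{\nu},N)-\overline{\cZ}(\underline{\nu},N)$ supported in the boundary $X_\Gamma^\tor\setminus X_\Gamma$. Writing
\[
\Phi^{\overline{\cZ}}(\tau)=\sum_{\underline{\nu},N}[\overline{\cZ}(\underline{\nu},N)]\,q^N\,\mathfrak e_{\underline\nu},
\qquad
\Phi^{\mathrm{bdry}}(\tau)=\sum_{\underline{\nu},N}[\cB(\underline{\nu},N)]\,q^N\,\mathfrak e_{\underline\nu},
\]
\Cref{t:main-inexplicit-form} yields the identity $\Phi^{\overline{\cZ}}=\Phi^{\widetilde{\cZ}}-\Phi^{\mathrm{bdry}}$ in which the first term is an honest Hermitian modular form of weight $n+2$ and representation $\rho_{L,g}$. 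It therefore suffices to show that $\Phi^{\mathrm{bdry}}$ is quasi-modular, i.e.\ that it admits a non-holomorphic completion $\widehat{\Phi^{\mathrm{bdry}}}$ transforming under $\rmU(g,g)(\Z)$ like a modular form of the same weight and type; granting this, $\widehat{\Phi^{\overline{\cZ}}}:=\Phi^{\widetilde{\cZ}}-\widehat{\Phi^{\mathrm{bdry}}}$ is non-holomorphic, transforms as a Hermitian modular form of weight $n+2$ and representation $\rho_{L,g}$, and has holomorphic part $\Phi^{\widetilde{\cZ}}-\Phi^{\mathrm{bdry}}=\Phi^{\overline{\cZ}}$, which is exactly the required completion.

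Next I would stratify the boundary and localize the corrections. Since $(V,h)$ has signature $(n+1,1)$, the ball quotient $X_\Gamma$ has only zero-dimensional rational boundary components, indexed by $\Gamma$-classes of isotropic $k$-lines $\ell\subset V$; to each such cusp the toroidal compactification attaches a boundary divisor, and the classes $[\cB(\underline{\nu},N)]$ decompose as a sum of contributions localized at these cusps. The essential point is to compute, cusp by cusp, the generating series of these local contributions. At a cusp $\ell$ the Levi data produces the positive-definite Hermitian $\cO_k$-lattice $K_\ell=(L\cap\ell^\perp)/(L\cap\ell)$ of rank $n$, and the degeneration of the genus-$g$ special cycles along the boundary divisor expresses the local contribution in terms of theta series of $K_\ell$. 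Because the relevant geometry is that of the normal bundle of the boundary divisor (equivalently, the toric combinatorics of the cone over the cusp), these theta series carry a polynomial insertion $P$ dictated by the local intersection multiplicities: the local contribution is a polynomial-weighted Hermitian theta function $\theta_{K_\ell,P}$ of genus $g$ attached to $K_\ell$, whose weight works out to $n+2$.

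The decisive input is then the theory of polynomial-weighted Hermitian theta functions developed earlier in the paper. For a harmonic polynomial $P$ the series $\theta_{K_\ell,P}$ would be an honest Hermitian modular form; the weights arising from the boundary are, however, non-harmonic, and the corresponding $\theta_{K_\ell,P}$ are only quasi-modular. Their completions are obtained in the standard way, by replacing $P$ with its completion under the relevant (Maass--Shimura type) raising operators — concretely, adding the $\overline\tau$-dependent terms built from generalized error functions (equivalently, incomplete Gamma factors), exactly as in the codimension-one orthogonal case \cite{engel-greer-tayou}. Summing these completions over all cusps $\ell$, with the normalizations dictated by the Weil representation $\rho_{L,g}$, yields $\widehat{\Phi^{\mathrm{bdry}}}$ and hence, by the reduction above, the completion of $\Phi^{\overline{\cZ}}$.

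The main obstacle is the explicit identification in the second step: computing the boundary behavior of the higher-codimension Zariski closures precisely enough to match $[\cB(\underline{\nu},N)]$ with a specific polynomial-weighted theta function, and pinning down the polynomial $P$ from the intersection theory on the toroidal boundary. This requires careful control of how genus-$g$ special cycles degenerate and accumulate near the zero-dimensional cusps, of the resulting cone and toric multiplicities, and of the compatibility of all local contributions with $\rho_{L,g}$ after summation over cusps. Once this matching is established, quasi-modularity and the existence of the completion follow formally from the polynomial-weighted theta function machinery.
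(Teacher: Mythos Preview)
Your overall strategy is sound and would work, but it differs from the paper's route in a way worth noting. You deduce \Cref{main-completion} from \Cref{t:main-inexplicit-form} by showing that the correction term $\Phi^{\mathrm{bdry}}$ is itself quasi-modular. The paper instead proves \Cref{t:main-inexplicit-form} and \Cref{main-completion} \emph{in parallel}, directly: one defines explicitly both the corrected cycle $\widetilde{\cZ}$ and the completed cycle $\widehat{\cZ}$ (\Cref{def:completed-cycle}), pairs the resulting generating series with an arbitrary test class $\alpha\in\mathrm{H}_{2g}(X_\Gamma^{\tor},\Q)$, and invokes the Splitting Lemma (\Cref{greer-lemma}) to write $\alpha=\beta+\gamma$ with $\beta$ supported in the open part and $\gamma$ in the boundary. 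The pairing with $\beta$ reduces to Kudla--Millson on $X_\Gamma$; the pairing with $\gamma$ reduces, via the excess intersection formula, to the boundary statements \Cref{t:completions-of-theta-series} (for the completion) and \Cref{explicit-main-theorem} (for the correction). Your argument hides the Splitting Lemma inside the black box of \Cref{t:main-inexplicit-form} and then repeats the boundary identification a second time; the paper's approach is more economical because it uses the splitting once and obtains both theorems simultaneously.

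One technical point in your proposal is off. You describe the completions as being built from ``generalized error functions (equivalently, incomplete Gamma factors)'', by analogy with the indefinite/mock situation. Here the boundary lattice $M=\fJ^\perp/\fJ$ is \emph{positive definite} of signature $(n,0)$, so no error functions appear. The completion of the polynomial-weighted theta series is given by Shimura's criterion (\Cref{th:shimura}): one replaces $P(\underline\lambda)$ by $\det(Y)^{-1}\exp(-\Delta/4\pi)(P)(\underline\lambda\cdot Y^{1/2})$, which unwinds to a finite sum of terms polynomial in the entries of $Y^{-1}$ wedged with powers of the polarization class $D$; see the explicit expression for $\varphi(\tau,\underline\lambda)$ preceding \Cref{t:completions-of-theta-series}. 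This is the mechanism that makes $\Phi^{\mathrm{bdry}}$ quasi-modular in your framework, and it is also what the paper uses directly for the completion.
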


An analogous statement was obtained for special divisors in the orthogonal case in \cite{engel-greer-tayou}. To our knowledge, Theorem \ref{main-completion} is the first instance of Hermitian quasi-modular forms appearing in the context of special cycles.
\medskip

To give a more explicit form to the corrections and completions in \Cref{t:main-inexplicit-form,main-completion}, we need to recall the structure of the toroidal compactification $X_\Gamma^\tor$. For each equivalence class of primitive isotropic line $\mathfrak J\subset L$ under the action of $\Gamma$, we have an associated boundary divisor $\cB_{\mathfrak J}$ which is isomorphic to the Serre tensor product $E_M = M\otimes E$, where $M=\mathfrak{J}^\bot/\mathfrak{J}$, and $E=\C/\cO_k$ is a CM elliptic curve. Let $\iota_\fJ:\cB_\fJ\hookrightarrow X_\Gamma^{\tor}$ denote the inclusion and let 
\[\iota_{\fJ,*}:\mathrm{H}^*(\cB_\fJ,\Q)\rightarrow \mathrm{H}^{*+2}(X_\Gamma^{\tor},\Q)~,\]
denote the Gysin morphism.

The divisor $\cB_{\mathfrak J}$ is polarized by an ample line bundle $\mathcal L$ isomorphic to the conormal bundle of $\cB_\fJ$ in $X_\Gamma^\tor$, so there is a natural Lefschetz decomposition on the cohomology of $\cB_\fJ$, where $\mathcal L$ denotes also the associated Lefschetz operator:

\[\mathrm{H}^{2g}(\cB_{\mathfrak J},\Q)\simeq \bigoplus _{0\leq \ell\leq g}\cL^{g-\ell}\mathrm{H}^{2\ell}_{\mathrm{prim}}(\mathcal{B}_\mathfrak{J},\Q)~.\]
As this is a decomposition of Hodge structures, we get an induced decomposition at the level of Hodge classes $\mathrm{Hdg}^{g}(-) = \mathrm{H}^{g,g}(-,\Q)$:

\[\mathrm{Hdg}^{g}(\cB_{\mathfrak J})\simeq \bigoplus _{0\leq \ell\leq g}\cL^{g-\ell}\mathrm{Hdg}^{\ell}_{\mathrm{prim}}(\mathcal{B}_\mathfrak{J})~.\]

For each $0\leq \ell\leq \frac{n}{2} $, consider a basis $(W_i^\ell)_{i\in I_\ell}$ of the primitive cohomology $\mathrm{Hdg}^{\ell}_{\mathrm{prim}}(\mathcal{B}_\mathfrak{J})$.  As the Hodge conjecture holds for $\cB_\mathfrak J$, the $W_i^\ell$ are classes of algebraic cycles; see \Cref{murasaki}. To each primitive cycle $W^\ell_i$,  we associate in Section \ref{s:corrections-theta-series} a harmonic polynomial\footnote{Polynomial in the real and imaginary parts of the $\lambda_i$'s.} $P^{\ell,\ell}_i$ in $(\lambda_1,\ldots,\lambda_\ell)\in (L_\R)^\ell$ that satisfies the following equivariance property for $A\in \mathrm{GL}_\ell(\C)$:
\[P\left((\lambda_1,\ldots,\lambda_\ell)\cdot A\right)=|\det(A)|^2\cdot P(\lambda_1,\ldots,\lambda_\ell)~.\]
The space of all polynomials that satisfy the previous homogeneity condition is in fact a finite-dimensional vector space denoted by $\cF_{n,\ell}$, and the direct sum 
$$\cF_{n,\bullet}=\bigoplus_{\ell=0}^{n}\cF_{n,\ell}$$
admits an $\mathfrak{sl}_2$-action introduced in \Cref{homog}, the primitive elements of which are exactly the harmonic polynomials, i.e., the span of the $(P_i^{\ell,\ell})_{\ell, i\in I_\ell}$. Let $(\Lambda,F,H)$ be the corresponding $\mathfrak{sl}_2$-triple. Our main theorem is the following.
\begin{theorem}\label{t:main-explicit-form}

Let $g\leq \frac{n}{2}$, let $\underline{\nu}\in (L^\vee/L)^{g}$, and let $N\in\mathrm{Herm}_{g}(k)_{\geq 0}$ be a Hermitian semi-positive matrix. In the cohomology group $H^{2g}(X_\Gamma^{\tor},\Q)$, the correction of the cycle $\overline{\cZ}(\underline{\nu},N)$ is trivial if $p_L^M(\underline{\nu})=0$ or if $N$ is not positive definite \footnote{This operator between Weil representations is defined in \Cref{intertwining}.} and otherwise, it is given by the formula:
\[[\widetilde{\cZ}(\underline{\nu},N)]=[\overline{\cZ}(\underline{\nu},N)]+\sum_{\fJ/\sim}\sum_{\underset{i\in I_{\ell}}{0\leq \ell\leq g-1}}\frac{r_\fJ}{d_k}P^\ell_i(\underline{\nu},N) \,\mathfrak \iota_{\fJ,*}[W^\ell_{i}\cup \mathcal{L}^{g-\ell-1}]~,\]
where by definition \[P^\ell_i(\underline{\nu},N):=\sum_{\substack{ (\lambda_1,\ldots,\lambda_g)\in (M^\vee)^g,\\ \gram(\underline \lambda)=N,\\
[\underline \lambda] =p^M_{L}(\underline{\nu})\\}} (\Lambda^{g-\ell}P^{\ell,\ell}_i)(\lambda_1,\ldots,\lambda_g)~,\]
$\Lambda$ is the raising operator on the $\mathfrak{sl}_2$-module $\cF_{n,\bullet}$, $-d_k$ is the discriminant of $k$, and $r_\fJ$ is a certain rational number associated to $\fJ$, see \Cref{boundary}.  
\end{theorem}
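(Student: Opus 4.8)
The plan is to compute the boundary correction directly from the degeneration of the special cycles at each boundary divisor $\cB_\fJ$, taking the existence of \emph{a} modularizing correction from \Cref{t:main-inexplicit-form} as given and focusing on making the boundary term explicit. First I would localize the problem at a single equivalence class $\fJ/\!\sim$ of primitive isotropic lines and analyze $\overline{\cZ}(\underline\nu,N)$ in a neighborhood of $\cB_\fJ$ using the toroidal (AMRT) coordinates recalled in \Cref{boundary}. The decisive structural fact is that $M=\fJ^\bot/\fJ$ carries a \emph{positive definite} Hermitian form of signature $(n,0)$, so $\cB_\fJ\simeq E_M$ is an abelian variety and the only vectors that degenerate into $\cB_\fJ$ are the $\underline\lambda\in (M^\vee)^g$ with $\gram(\underline\lambda)=N$ and residue $[\underline\lambda]=\uparrow_L^M(\underline\nu)$. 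This already accounts for the dichotomy in the statement: when $\uparrow_L^M(\underline\nu)=0$ there are no such vectors, so no boundary contribution appears and the correction is trivial.

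The heart of the argument is to identify this boundary contribution with a polynomial weighted theta function on the positive definite lattice $M$. I would apply Kudla--Millson theory on the abelian variety $\cB_\fJ=E_M$: special cycles on $E_M$ are represented in cohomology by theta series whose Fourier coefficients are sums $\sum_{\underline\lambda} P(\underline\lambda)$ of harmonic polynomials over lattice vectors of prescribed Gram matrix, and by \Cref{murasaki} the harmonic polynomials $P$ realize exactly the primitive Hodge classes $\mathrm{Hdg}^\ell_{\mathrm{prim}}(\cB_\fJ)$ spanned by the $W_i^\ell$. Transporting these classes into $\mathrm{H}^{2g}(X_\Gamma^\tor)$ is done by the Gysin map $\iota_{\fJ,*}$; since $\cB_\fJ$ is a divisor, $\iota_{\fJ,*}$ raises codimension by one, and the remaining codimension is filled inside $\cB_\fJ$ by powers of the conormal Lefschetz operator $\cL$. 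A bookkeeping of degrees then forces a primitive class of degree $2\ell$ to be coupled with $\cL^{g-\ell-1}$ before pushforward, which is precisely the shape $\iota_{\fJ,*}[W^\ell_i\cup \cL^{g-\ell-1}]$ in the formula.

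The crucial compatibility is between two $\mathfrak{sl}_2$-actions. Geometrically, the Lefschetz decomposition of $\mathrm{Hdg}^g(\cB_\fJ)$ is governed by the hard Lefschetz $\mathfrak{sl}_2$ attached to $\cL$; automorphically, the space $\cF_{n,\bullet}$ of homogeneous polynomials carries the triple $(E,F,H)$ of \Cref{homog}, whose primitive vectors are the harmonic polynomials $P_i^{\ell,\ell}$ attached to the $W_i^\ell$. I would show that the raising operator $E$ intertwines these structures, so that $E^{g-\ell}P_i^{\ell,\ell}$ on the polynomial side matches $\cL^{g-\ell}$ applied to the primitive class on the cohomology side, the single extra power of $\cL$ being consumed by the conormal/Gysin passage; this is exactly the content of the coefficient formula $P_i^\ell(\underline\nu,N)=\sum_{\underline\lambda}(E^{g-\ell}P_i^{\ell,\ell})(\underline\lambda)$, summed over $\underline\lambda\in(M^\vee)^g$ with $\gram(\underline\lambda)=N$ and $[\underline\lambda]=\uparrow_L^M(\underline\nu)$. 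Because $M$ is positive definite, the resulting theta series is genuinely holomorphic and modular, which is what makes the explicit correction holomorphic and guarantees it restores modularity of the series of closures.

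The last step is to pin down the constant $r_\fJ/d_k$ by a direct local computation of the Kudla--Millson Schwartz form in the toroidal chart around $\cB_\fJ$: the factor $1/d_k$ should arise from the covolume of the CM elliptic curve $E=\C/\cO_k$ and the $\sqrt{d_k}$-normalization of the Hermitian form, while $r_\fJ$ records the index of the stabilizer of $\fJ$ in $\Gamma$ together with the ramification of the toroidal coordinate, as fixed in \Cref{boundary}. The main obstacle I anticipate is precisely this matching of constants together with the proof that $E$ and $\cL$ are genuinely intertwined: both hinge on a careful analysis of the degeneration of the Kudla--Millson form at the boundary and on the way harmonic polynomials on $(M_\R)^g$ produce primitive cohomology on $E_M$, and any normalization discrepancy would propagate through every Fourier coefficient of the generating series.
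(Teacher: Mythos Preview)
Your conceptual picture is on target: the $\mathfrak{sl}_2$-intertwining between the Lefschetz action on $\mathrm{H}^{*,*}(\cB_\fJ)$ and the $(E,F,H)$-triple on $\cF_{n,\bullet}$ is indeed the engine, and the shape $\iota_{\fJ,*}[W_i^\ell\cup\cL^{g-\ell-1}]$ is forced for exactly the reasons you give. But the logical architecture has gaps. You take Theorem~\ref{t:main-inexplicit-form} as input, yet in the paper the two theorems are proved simultaneously: the correction is \emph{defined} explicitly (Definition~\ref{def:completed-cycle}) and then modularity is \emph{verified} by pairing with test classes. More importantly, you never say how ``localizing to the boundary'' is made rigorous. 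The paper does this via the Splitting Lemma (Lemma~\ref{greer-lemma}): every $\alpha\in\mathrm{H}_{2g}(X_\Gamma^\tor,\Q)$ decomposes as $\beta+\gamma$ with $\beta$ supported on $X_\Gamma$ and $\gamma$ on $\bigoplus_\fJ\cB_\fJ$, and this is precisely where $g\le n/2$ enters, via Hard Lefschetz for the anti-ample normal bundle. Pairing with $\beta$ is then Kudla--Millson on $X_\Gamma$ (the boundary corrections are invisible there), and pairing with $\gamma$ is the boundary correction Theorem~\ref{explicit-main-theorem}. Without the splitting there is no mechanism to detach the boundary contribution from the global problem.

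The constant $r_\fJ/d_k$ also does not arise from degenerating the Kudla--Millson Schwartz form or from the covolume of $E$. The paper computes $c_1(\cN_{\cB_\fJ}^\vee)=\tfrac{d_k}{r_\fJ}D$ directly from the central extension describing $\Gamma\cap N(\bfP)$ (Proposition~\ref{propo:normal-is-ample}, Corollary~\ref{c:relation-normal}); here $r_\fJ$ generates $\Gamma\cap C(\Q)\simeq r_\fJ\Z$, not a stabilizer index. The constant then drops out of Fulton's excess intersection formula for $(\iota_{\fJ,*}[-])\cdot\gamma$. The boundary input is the explicit harmonic representative $f^g(\underline\lambda)$ of $Z(\underline\lambda)$ built in Section~\ref{s:theta-lift}, for which the $\mathfrak{sl}_2$-equivariance is established by direct differential computation rather than by an appeal to Kudla--Millson on $E_M$.
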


Let $\vartheta_{M,P}(\tau)$ denote the weighted theta series of the homogeneous polynomial $P$, then we get the following corollary. 
\begin{corollary}
  
The cohomology class of the corrected sum: 
\[\widetilde{\Phi}^g_L(\tau) = \overline{\Phi}_L^g(\tau)+\sum_{\fJ/\sim}\sum_{\underset{i\in I_{\ell}}{0\leq \ell\leq g-1}}\frac{r_\fJ}{d_k}\cdot p_M^L(\vartheta_{M,{P^{\ell}_i}}(\tau))\otimes \mathfrak \iota_{\fJ,*}[W_i^\ell\cup \mathcal{L}^{g-\ell-1}]\]
is a Hermitian modular form of weight $n+2$ and representation $\rho_{L,g}$ with respect to
$\mathrm{U}(g,g)(\Z)$.  
\end{corollary}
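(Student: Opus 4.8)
The plan is to derive the corollary as a direct reformulation of Theorems \ref{t:main-inexplicit-form} and \ref{t:main-explicit-form}, the only genuine work being the identification of the generating series of the correction coefficients with arrow-transported weighted theta series. By Theorem \ref{t:main-inexplicit-form}, the series $\sum_{\underline\nu,N}[\widetilde{\cZ}(\underline\nu,N)]\,q^N\mathfrak{e}_{\underline\nu}$ is a Hermitian modular form of weight $n+2$ and representation $\rho_{L,g}$ for $\mathrm{U}(g,g)(\Z)$, valued in $H^{2g}(X^\tor_\Gamma)\otimes\C[(L^\vee/L)^g]\llbracket q\rrbracket$. First I would substitute the explicit expression for $[\widetilde{\cZ}(\underline\nu,N)]$ from Theorem \ref{t:main-explicit-form} into this series and show that the result is exactly the corrected sum appearing in the corollary; the conclusion is then immediate.

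The substitution splits the series into a main term and a correction. The main term $\sum_{\underline\nu,N}[\cZ(\underline\nu,N)]\,q^N\mathfrak{e}_{\underline\nu}$ is, under the cycle class map into $H^{2g}(X^\tor_\Gamma)$ applied to the Zariski closures, precisely the image of $\Phi_L^g$. For the correction, I observe that the outer summation — over the finitely many $\Gamma$-classes $\fJ/\!\sim$, the degrees $0\le\ell\le g-1$, and the finite index sets $i\in I_\ell$ — does not depend on $(\underline\nu,N)$. Since everything lives in a formal power series ring over a finite-dimensional space, I may interchange this finite sum with the Fourier sum coefficient-wise, with no convergence issue. This factors each correction term as the fixed cohomology class $\tfrac{r_\fJ}{d_k}\,\iota_{\fJ,*}[W^\ell_i\cup\mathcal{L}^{g-\ell-1}]$ times the scalar generating series $\sum_{\underline\nu,N}P^\ell_i(\underline\nu,N)\,q^N\mathfrak{e}_{\underline\nu}$, which is valued in $\C[(L^\vee/L)^g]\llbracket q\rrbracket$.

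The crux is to recognize this inner series as $\uparrow_M^L(\vartheta_{M,P^\ell_i})$. Here I would unwind the definition of $P^\ell_i(\underline\nu,N)$ from Theorem \ref{t:main-explicit-form}, namely the sum of $(E^{g-\ell}P^{\ell,\ell}_i)(\underline\lambda)$ over $\underline\lambda\in(M^\vee)^g$ with $\gram(\underline\lambda)=N$ and $[\underline\lambda]=\uparrow^M_L(\underline\nu)$, and compare it, coefficient by coefficient, with the $(\underline\nu,N)$-Fourier coefficient of $\vartheta_{M,P^\ell_i}$ after transport by $\uparrow_M^L$. By construction, $\vartheta_{M,P^\ell_i}$ is the $\C[(M^\vee/M)^g]$-valued series whose $(\underline\mu,N)$-coefficient collects the weight polynomial over all $\underline\lambda\in(M^\vee)^g$ of Gram matrix $N$ and residue $\underline\mu$; applying the arrow operator $\uparrow_M^L$ restricts attention to residues of the form $\uparrow^M_L(\underline\nu)$ and records them against $\mathfrak{e}_{\underline\nu}$, which is exactly the summation range defining $P^\ell_i(\underline\nu,N)$ (the normalizing constants being absorbed into $r_\fJ/d_k$). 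This comparison is purely a matter of the definitions of $\vartheta_{M,-}$ and the arrow operator, so the identity of generating series follows.

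The step I expect to require the most care, and the only genuine obstacle, is the bookkeeping of the $\mathfrak{sl}_2$ raising operator $E^{g-\ell}$: the Fourier coefficients in Theorem \ref{t:main-explicit-form} are built from $E^{g-\ell}P^{\ell,\ell}_i$ rather than from the harmonic generator $P^\ell_i$ itself. I must therefore confirm that feeding $P^\ell_i$ into the weighted theta construction reproduces these raised coefficients, i.e. that the normalization of $\vartheta_{M,-}$ is compatible with the $E$-action on $\cF_{n,\bullet}$ set up in \Cref{homog}. Once this compatibility is checked, combining the three displays shows that the corrected sum of the corollary coincides with the modular series of Theorem \ref{t:main-inexplicit-form}, and is therefore a Hermitian modular form of weight $n+2$ and representation $\rho_{L,g}$ with respect to $\mathrm{U}(g,g)(\Z)$, as claimed.
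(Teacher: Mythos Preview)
Your proposal is correct and matches the paper's (implicit) approach: the corollary is stated in the paper without proof, as an immediate repackaging of Theorems~\ref{t:main-inexplicit-form} and~\ref{t:main-explicit-form}, and your argument carries out exactly that unwinding. The one point you flag as needing care---that the weighting polynomial in $\vartheta_{M,P^\ell_i}$ must be the raised polynomial $E^{g-\ell}P^{\ell,\ell}_i=\Lambda^{g-\ell}P^{\ell,\ell}_i\in\cF_{n,g}$ rather than the primitive $P^{\ell,\ell}_i\in\cF_{n,\ell}$ itself---is a genuine notational ambiguity in the statement, and the resolution is exactly as you suspect (compare Theorem~\ref{explicit-main-theorem}, where the theta series is written explicitly as $\vartheta_{\Lambda^{g-\ell}P^{\ell,\ell}_i}$).
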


The theta series  $\vartheta_{M,{P^{\ell}_i}}(\tau)$ admit non-holomorphic completions which transform like modular forms; see \Cref{s:quasi-modular}, whence the following corollary.

\begin{corollary}\label{c:hermitian-quasi-modular}
    The generating series $\overline{\Phi}_L^g$ is a Hermitian quasi-modular form with values in the cohomology of $X^\tor_\Gamma.$
\end{corollary}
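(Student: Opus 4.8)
The plan is to deduce \Cref{c:hermitian-quasi-modular} directly from the explicit boundary formula of \Cref{t:main-explicit-form} together with the quasi-modularity of the individual weighted theta series established in \Cref{s:quasi-modular}. By the preceding corollary, the corrected series
\[
F:=\Phi_L^g+\sum_{\fJ/\sim}\sum_{\substack{i\in I_\ell\\ 0\leq \ell\leq g-1}}\frac{r_\fJ}{d_k}\cdot\uparrow_M^L(\vartheta_{M,P^\ell_i})\otimes [W_i^\ell\cup \mathcal L^{g-\ell-1}]
\]
is a genuine Hermitian modular form of weight $n+2$ and representation $\rho_{L,g}$. It therefore suffices to show that the boundary sum $B:=F-\Phi_L^g$ is itself a cohomology-valued Hermitian quasi-modular form, for then $\Phi_L^g=F-B$ is a linear combination of a modular form and a quasi-modular form, and the space of quasi-modular forms is closed under such combinations.

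The key observation is that, by the Fourier-coefficient formula of \Cref{t:main-explicit-form}, the weighted theta series $\vartheta_{M,P^\ell_i}$ appearing in $B$ is attached to the polynomial $E^{g-\ell}P^{\ell,\ell}_i$, the image under the raising operator of the harmonic polynomial $P^{\ell,\ell}_i$. For $\ell<g$ this polynomial is no longer harmonic, so $\vartheta_{M,P^\ell_i}$ is not modular on the nose; rather, it is precisely the type of object analyzed in \Cref{s:quasi-modular}, where a weighted theta series attached to a polynomial in the image of $E$ is shown to admit an explicit non-holomorphic completion $\widehat{\vartheta}_{M,P^\ell_i}$ transforming like a Hermitian modular form for the Weil representation of $M$. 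Since the arrow operator $\uparrow_M^L$ is $\C$-linear and intertwines the Weil representation of $M$ with $\rho_{L,g}$, it carries this completion to $\uparrow_M^L(\widehat{\vartheta}_{M,P^\ell_i})$, which transforms like a modular form for $\rho_{L,g}$; tensoring with the fixed cohomology classes $[W_i^\ell\cup\mathcal L^{g-\ell-1}]$ and summing over $\fJ$ and $(\ell,i)$ produces a completion $\widehat B$ of $B$ that transforms modularly.

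Assembling the pieces, set $\widehat{\Phi_L^g}:=F-\widehat B$. As the difference of the genuine modular form $F$ and the modularly transforming series $\widehat B$, the series $\widehat{\Phi_L^g}$ transforms like a Hermitian modular form of weight $n+2$ and representation $\rho_{L,g}$ with respect to $\mathrm U(g,g)(\Z)$. Moreover $\widehat B-B$ is purely the non-holomorphic correction supplied by \Cref{s:quasi-modular}, so $\widehat{\Phi_L^g}-\Phi_L^g=B-\widehat B$ is non-holomorphic, exhibiting $\widehat{\Phi_L^g}$ as a non-holomorphic completion of $\Phi_L^g$. This is exactly the definition of a Hermitian quasi-modular form, which proves the corollary.

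The only genuinely substantive input is the explicit completion of the weighted theta series for polynomials in the image of $E$, which is carried out in \Cref{s:quasi-modular}; this is where the main work lies, as one must compute the effect of the relevant Maass lowering operator on $E^{g-\ell}P^{\ell,\ell}_i$ and verify that the resulting non-holomorphic term matches the prescribed completion, and check the compatibility of $\uparrow_M^L$ with this completion. Granting those facts, the present corollary is a formal bookkeeping consequence of \Cref{t:main-explicit-form} and requires no new ideas.
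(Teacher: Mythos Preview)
Your proof is correct and follows essentially the same approach as the paper: the text immediately preceding \Cref{c:hermitian-quasi-modular} notes that ``the theta series $\vartheta_{M,P^\ell_i}$ admit non-holomorphic completions which transform like modular forms; see \Cref{s:quasi-modular}, whence the following corollary,'' and your argument spells out exactly this deduction from \Cref{t:main-explicit-form} and Shimura's criterion (\Cref{th:shimura}). One small remark: the ``substantive input'' you flag at the end is precisely \Cref{th:shimura} applied to $P=\Lambda^{g-\ell}P_i^{\ell,\ell}\in\cF_{n,g}$, which gives the completion directly without any separate Maass-operator computation.
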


We refer to \Cref{def:completed-cycle} for an explicit expression of the non-holomorphic completions of the special cycles.

In our theorems, the assumption that the codimension $g$ of the special cycles is smaller than $\frac{n+1}{2}$ is an artifact of the proof, as we don't know how to prove the Splitting Lemma \ref{greer-lemma} in general. We thus make the following conjecture, which is a refinement of earlier conjectures in the unitary case.
\begin{conjecture}
    The statements of \Cref{main-completion} and \Cref{t:main-explicit-form} still hold in the case of $\mathrm{U}(n+1,1)$, with values in the Chow groups of all codimensions. 
 \end{conjecture}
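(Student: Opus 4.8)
The plan is to localize the role of the hypothesis $g\leq \frac{n}{2}$ and to separate the two enlargements being asked for — the range of codimensions and the passage from $\mathrm{H}^{2g}$ to $\mathrm{CH}^g$. Inspecting the derivation of \Cref{t:main-explicit-form}, every ingredient other than the Splitting Lemma \ref{greer-lemma} is insensitive to the codimension: the Lefschetz decomposition of $\mathrm{Hdg}^{g}(\cB_\fJ)$ and the Hodge conjecture for $\cB_\fJ$ hold in all degrees because $\cB_\fJ\cong E_M$ is an abelian variety (isogenous to a power of the CM elliptic curve $E$), the Gysin pushforwards $\iota_{\fJ,*}$ are always defined, and the identification of the boundary contribution with the weighted theta series $\vartheta_{M,P^\ell_i}$ through the arrow operators $\uparrow_L^M$ uses only the $\mathfrak{sl}_2$-equivariance of the raising operator $E$ on $\cF_{n,\bullet}$. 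So the first step is to record the correct form of the statement past the middle: for $g>\frac n2$ the inner sum should run over $0\leq \ell\leq \min(g-1,\lfloor n/2\rfloor)$, and the representation theory of the triple $(E,F,H)$ forces $E^{g-\ell}P^{\ell,\ell}_i$ to vanish unless $\ell\leq n-g$, which is precisely the symmetry of the primitive decomposition about the middle degree. With the formula thus truncated, the modularity computation transcribes verbatim.

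The main obstacle is therefore to prove the Splitting Lemma \ref{greer-lemma} for all $g$. I would phrase it through the Gysin sequence of the boundary divisor,
\[
\bigoplus_{\fJ/\sim}\mathrm{H}^{2g-2}(\cB_\fJ)\xrightarrow{\ \oplus\,\iota_{\fJ,*}\ }\mathrm{H}^{2g}(X^\tor_\Gamma)\xrightarrow{\ j^*\ }\mathrm{H}^{2g}(X_\Gamma),
\]
and show that the indeterminacy in lifting the interior class of $\cZ(\underline\nu,N)$ to $X^\tor_\Gamma$ — the image of the Gysin maps — is spanned by the classes $\iota_{\fJ,*}[W^\ell_i\cup\cL^{g-\ell-1}]$, with coefficients pinned down by the limit of the Kudla--Millson Schwartz form along $\cB_\fJ$. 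In low codimension this is controlled by the weight filtration: the boundary terms have strictly smaller weight and the relevant extension collapses. To reach the full range I would replace this ad hoc weight bound by Saito's theory of mixed Hodge modules on $X^\tor_\Gamma$, computing the weight-graded pieces of $\mathrm{H}^{2g}(X^\tor_\Gamma)$ in every degree and reducing the lemma to a representation-theoretic identity in $\cF_{n,\bullet}\otimes\C[(L^\vee/L)^g]$ that is symmetric under $E\leftrightarrow F$, i.e.\ under Poincaré duality on $\cB_\fJ$. The essential difficulty is that past the middle the lowering operator $F$ becomes the injective one, so the clean primitive ansatz must be dualized, and one must rule out genuinely non-primitive boundary classes entering the correction.

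For the passage to Chow groups I would proceed in two stages. First, the correction cycles are honest algebraic cycles: the $W^\ell_i$ are algebraic on $\cB_\fJ$ by the Hodge conjecture for abelian varieties, and the operators $E,F,\cL$ are induced by explicit correspondences, so $\widetilde{\cZ}(\underline\nu,N)\in\mathrm{CH}^g(X^\tor_\Gamma)$ a priori. Second, I would combine the modularity in $\mathrm{CH}^g(X_\Gamma)$ of the uncorrected Kudla generating series, where available for unitary Shimura varieties, with a cycle-level refinement of the boundary argument, lifting each cohomological relation defining the correction to a rational equivalence supported on $\bigsqcup_\fJ\cB_\fJ$.

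The two genuine hurdles I expect are, in order, the Splitting Lemma beyond the middle — where the weight/Lefschetz argument degenerates and the injectivity of $E$ is lost — and the cohomology-to-Chow gap: because the cycle class map $\mathrm{CH}^g\to\mathrm{H}^{2g}$ has nontrivial kernel, the cohomological modularity of \Cref{t:main-explicit-form} does not formally imply the Chow-valued statement, and the boundary corrections must be shown to be canonical at the level of rational equivalence rather than merely cohomologous.
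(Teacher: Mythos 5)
You should first be clear about the status of the target statement: it is a \emph{conjecture} that the paper explicitly leaves open --- the authors state that the restriction $g\leq \frac n2$ is an artifact of their proof because they do not know how to prove the Splitting Lemma \ref{greer-lemma} in general --- so there is no proof in the paper to compare against. Read on its own terms, your proposal is a research outline rather than a proof: its two central steps coincide exactly with the two open problems, and neither is actually resolved. Your one solid contribution is the observation that for $g>\frac n2$ the correction formula must be truncated, since $E^{g-\ell}P^{\ell,\ell}_i$ vanishes unless $g-\ell\leq n-2\ell$, i.e.\ $\ell\leq n-g$; this is correct and consistent with \Cref{r:kashiwara-vergne}, but it only pins down what the conjectural formula should say, not why the resulting series is modular.

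The gaps are concrete. First, for the Splitting Lemma past the middle: the paper's proof rests on surjectivity of the cap product with $c_1(\cN_{\cB_\fJ})$ from $\mathrm{H}_{2g}(\cB_\fJ,\Q)$ to $\mathrm{H}_{2g-2}(\cB_\fJ,\Q)$, which Hard Lefschetz on the $n$-dimensional abelian variety $\cB_\fJ$ guarantees only up to the middle; beyond that range this map genuinely fails to be surjective in general, the Mayer--Vietoris boundary map no longer injects, and $\mathrm{H}_{2g}(X_\Gamma^{\tor},\Q)$ may contain classes that are neither interior nor supported on the boundary. Invoking Saito's mixed Hodge modules to ``compute the weight-graded pieces'' is a name, not an argument: you offer no mechanism for pairing the corrected generating series against such exotic classes, and the claim that everything reduces to ``a representation-theoretic identity symmetric under $E\leftrightarrow F$'' is a hope, not a reduction --- indeed the splitting statement itself may simply be false past the middle, in which case a different strategy is required, not a repaired proof of the same lemma. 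Second, for the Chow-valued statement: you correctly note that the cycle class map $\mathrm{CH}^g\to \mathrm{H}^{2g}$ has a kernel, but your proposed remedy --- ``lifting each cohomological relation defining the correction to a rational equivalence supported on the boundary'' --- is a restatement of the conjecture, not a step toward it; and the interior input you cite (Chow-level modularity of the open Kudla generating series, ``where available'') concerns $X_\Gamma$ only and gives no handle on how the boundary corrections behave under rational equivalence on $X_\Gamma^{\tor}$.
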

 
\subsection{Previous and related work}
The modularity of generating series of special cycles on Shimura varieties first appeared in the work of Hirzebruch and Zagier in the context of Hilbert modular surfaces \cite{hirzag}. Subsequent work of Kudla and Millson \cite{kudla-millson} showed that generating series of cohomology classes of cycles of arbitrary codimension in locally symmetric spaces associated to orthogonal (resp. unitary) groups give rise to holomorphic Siegel (resp. Hermitian) modular forms. If the locally symmetric space is associated to an orthogonal group of signature $(n,2)$ or a unitary group, then it is a Shimura variety, and stronger results are known. Borcherds \cite{borcherdszagier,borcherds-inventiones-automorphic} proved that the generating series of special divisors in orthogonal Shimura varieties is a modular form with values in the Picard group, and Bruinier--Westerholt-Raum \cite{Bruinier-Westerholt-Raum}, based on previous work of Zhang \cite{zhang-thesis-2009}, proved that the generating series of higher codimension cycles is a Siegel modular form, also valued in the Chow group. Similar results are also known in the case of unitary Shimura varieties \cite{xia,liu-yifeng,yuan-zhang-zhang}, and we refer to the introduction of \cite{bruinier-rosu-zemel} for a more detailed background.

On the other hand, the behavior of the generating series of compactified special cycles in a toroidal compactification is more mysterious. For orthogonal Shimura varieties, the case of divisors was first solved by Bruinier-Zemel \cite{bruinierzemel}; see also subsequent work \cite{engel-greer-tayou,garcia-modularity} and for divisors in unitary Shimura varieties of signature $(n,1)$ it is a consequence of \cite{bruinier-howard-kudla-yang}, see also the work of Cogdell \cite{Cogdell-Picard-modular}. For zero-cycles, Bruinier-Rosu-Zemel \cite{bruinier-rosu-zemel} have recently proven Conjecture 1.1 in the orthogonal case and under some assumptions in the unitary case, see \cite[Theorems 1.2, 1.3, 1.4]{bruinier-rosu-zemel}. Therefore, \Cref{t:main-explicit-form} is the first general result showing modularity of compactified cycles of arbitrary codimension, and it also provides an explicit non-holomorphic completion.

\subsection{Strategy of proof}
The proof of \Cref{t:main-explicit-form} relies on several ingredients: 
\begin{itemize}
    \item First, we describe the restrictions of the special cycles to the boundary of the compactified Shimura varieties, using ideas similar to those in our earlier work \cite{engel-greer-tayou}. A splitting lemma in homology reduces the proof to a computation inside the boundary. This is where the assumption that the codimension is less than $\frac{n+1}{2}$ appears, as we do not have a splitting lemma beyond that range. 
    \item The boundary divisors can be explicitly understood as finite group quotients of $E_M$ where $E$ is the CM elliptic curve $\C/\cO_k$, and $M$ is a projective $\mathcal{O}_k$ module with positive definite Hermitian pairing. We construct Poincar\'{e} dual harmonic forms to the special cycles in each boundary divisor, which appear to be an instance of theta lifting. A modularity criterion allows us to explicitly construct non-holomorphic completions.
    
    \item  To construct the corrections to the special cycles, we first develop a theory of Hermitian quasi-modular forms using theta series weighted by certain homogeneous polynomials. We show that the space of such polynomials admits an $\mathfrak{sl}_2$-action and the special cycles intertwine this action with the $\mathfrak{sl}_2$-action on the cohomology of the boundary. This compatibility leads to a proof of \Cref{t:main-explicit-form}.
\end{itemize}
\subsection{Organization of the paper}
In \Cref{s:modular-forms}, we give the necessary background on vector-valued Hermitian modular forms and introduce quasi-Hermitian modular forms which come from weighted theta series. We show that the space of weighting polynomials carries an $\mathfrak{sl}_2$-action and explain a general method for correcting quasi-modular theta series.

In \Cref{s:shimura-varieties}, we review the theory of unitary Shimura varieties associated to unitary groups of signature $(n+1,1)$ and their toroidal compactifications. In \Cref{boundary}, we analyze the structure of the boundary, introduce the special cycles in the boundary, and relate the normal bundle of the boundary to the class of the polarization induced by the lattice $(L,h)$. In \Cref{splitting}, we prove the key Splitting Lemma for the rational homology of a toroidal compactification.  

In \Cref{s:theta-lift}, we construct harmonic representatives of the special cycles in the boundary that are expressed using homogeneous polynomials valued in the algebra differential forms, and we use a modularity criterion to construct the non-holomorphic completions of the generating series of special cycles. We use the $\mathfrak{sl}_2$-action to construct the modular corrections. 

Combining these tools, the main results of the paper are proved in \Cref{s:main-results}. The arguments for the correction and the completion are brief and entirely parallel, thanks to the setup of the preceding sections.

\subsection{Acknowledgments}
We thank Philip Engel, Benjamin Howard, and Wei Zhang for useful discussions and helpful comments on a first draft of this paper. F.G. was supported by NSF grant DMS-2302548, and S.T. was supported by NSF grants DMS-2302388 and DMS-2503815. 

\section{Hermitian quasi-modular forms}\label{s:modular-forms}

\subsection{Hermitian Modular forms}
Let $k$ be an imaginary quadratic field with ring of integers $\cO_k$ and different ideal $\mathfrak{D}_k$. Let $g\geq 1$ and for a matrix $N\in \mathrm M _g(\C)$, let $N^{*}=\overline{N}^t$ denote its conjugate-transpose. 
\medskip

The Hermitian modular group $\mathrm U(g,g)$ is the algebraic group over $\Q$ whose points over any $\Q$-algebra $R$ are:
\[\mathrm U(g,g)(R)=\{N\in \mathrm M_{2g}(R\otimes_\Q k)| N^{*}JN=J\}~,\]
where $J$ denotes the matrix:
\[J=\begin{pmatrix}
    0&\mathrm I_g\\-\mathrm I_g&0\\
\end{pmatrix}~.\]
It is a reductive algebraic group over $\Q$. 
The integral unitary group is the discrete subgroup defined by: 
\[\mathrm U\left(g,g\right)(\Z)=\{N\in \mathrm M_{2g}(\cO_k)\,|\, N^* JN=J\}.\]

The symmetric space of $\mathrm U(g,g)(\R)$, denoted $\mathcal{H}_g$ and called the Hermitian upper half-space, is identified with the space of matrices $\tau \in \mathrm{M}_g(\C)$ such that $Y=\frac{\tau-\tau^*}{2i} >0$ is a positive definite Hermitian matrix. Thus, we have:
\[\mathcal{H}_g=\{\tau\in \mathrm{M}_g(\C)\,|\, Y=\frac{\tau-\tau^*}{2i} >0 \}~.\]
An element $T=\begin{pmatrix}A&B\\C&D\\\end{pmatrix}\in\mathrm U(g,g)(\R)$ acts on $\cH_g$ via fractional linear transformations:
\begin{align*}
\cH_g&\rightarrow\cH_g\\
\tau&\mapsto T\cdot \tau= (A\tau +B)(C\tau+D)^{-1}~.
\end{align*}

\begin{definition}
    A Hermitian modular form of genus $g$ and weight $k$ is a holomorphic function $ f:\cH_g\rightarrow \C$ that satisfies the transformation formula: 
    \[ f\left(T\cdot \tau \right)=\det\left(C\tau+D\right)^kf\left(\tau\right),\]
    for every element $T=\begin{pmatrix}
        A&B\\
        C&D\\
    \end{pmatrix}\in \mathrm{U}(g,g)(\Z)~.$
\end{definition}

A Hermitian modular form $f$ admits a Fourier expansion: 
\[f(\tau)=\sum_{N\in\mathrm{Herm}_{g}(k)}c(N)q^N,\quad q^N=e^{2\pi i\trace(\tau N)}~,\]
where $\mathrm{Herm}_{g}(k)$ denotes the group of $g\times g$ Hermitian matrices.
Since we will be primarily interested in modular forms arising as theta series for possibly non-unimodular Hermitian lattices, we must consider level subgroups of $\mathrm{U}(g,g)(\Z)$, or alternatively vector-valued modular forms which we now introduce; see \cite[Section 2.2]{bruinier-rosu-zemel} for more details.

Let $(L,h)$ be a Hermitian lattice over $\mathcal{O}_k$ with a Hermitian form of signature $(p,q)$. Then $(L,\trace_{k/\Q}(h))$ is naturally a $\Z$-lattice with a quadratic form of signature $(2p,2q)$. The dual lattice $L^\vee$ of the $\Z$-lattice of $L$ is isomorphic to its dual as $\mathcal{O}_k$ -lattice and admits the following equivalent descriptions, see \cite[Corollary 1.12]{zemel-hermitian-jacobi}: 
\begin{align*}L^\vee&=\{x\in L_\Q \,|\,\forall y\in L,\quad  h(x,y)\in \mathfrak{D}^{-1}_k\}\\
&=\{x\in L_\Q \,|\,\forall y\in L,\quad  \trace_{k/\Q} \left(h(x,y)\right)\in \Z\}~.\\
\end{align*} 
The Weil representation
\[\rho_{L,g}:\mathrm{U}_{g,g}(\Z)\rightarrow \mathrm{Aut}(\C\left[(L^\vee/L)^g\right])\] is defined as follows. First, let $(\mathfrak e_{\underline{\nu}})_{\underline{\nu}\in (L^\vee/L)^g}$ denote the canonical basis of $ \C[(L^\vee/L)^g]$. The group $\mathrm{U}_{g,g}(\Z)$ admits the following generators: 
\begin{align*}
    m(A)&:=\begin{pmatrix}
        A&0\\0&(A^{*})^{-1}
    \end{pmatrix},\quad A\in\mathrm{GL}_g(\cO_k)~,\\
     n(B)&:=\begin{pmatrix}
        \mathrm{I}_g&B\\0&\mathrm I_g
    \end{pmatrix},\quad  B\in \mathrm{Herm}_g(\cO_k)~,\\
     w_g&:=\begin{pmatrix}
        0&\mathrm -I_g\\\mathrm I_g& 0
    \end{pmatrix}~.
\end{align*}

and the action of these generators is given by: 
\begin{align*}
    \rho_{L,g}(m(A)) \mathfrak e_{\underline{\nu}}&=\det(A)^{-p-q}\mathfrak e_{\underline{\nu} A^{-1}}~, \\
    \rho_{L,g}(n(B))\mathfrak e_{\underline\nu}&=e(\trace(h(\underline \nu)B))\mathfrak e_{\underline{\nu}},\\
    \rho_{L,g}(w_g) \mathfrak e_{\underline{\nu}} &=\frac{\gamma_{L,f}}{|L^\vee/L|^{\frac{g}{2}}}\sum_{\underline{\mu}\in(L^\vee/L)^g}  e^{2\pi i\left(-\frac{1}{2}\trace_{k/\Q}(h( \underline{\nu}, \underline{\mu}))\right)} \mathfrak e_{\underline\mu},
\end{align*}

In the last formula, $\gamma_{L,f}$ is an $8$th root of unity (the Weil index), $h( \underline{\nu})=\left(\frac{h(\nu_i,\nu_j)}{2}\right)_{1\leq i,j\leq g}$, and $h( \underline{\nu}, \underline{\mu})=\left(h(\nu_i,\mu_j)\right)_{1\leq i,j\leq g}$.

\begin{definition}
    Let $f:\cH_g\rightarrow \C[(L^\vee/L)^g]$ be a holomorphic function. We call $f$ a Hermitian modular form of weight $k$ and representation  $\rho_{L,g}$ with respect to $\mathrm{U}(g,g)(\Z)$ if it satisfies:
     \[ f\left((A\tau+B)(C\tau+D)^{-1}\right)=\det(C\tau+D)^k\rho_{L,g}(T)\cdot f(\tau),\]
    for every element $T=\begin{pmatrix}
        A&B\\
        C&D\\
    \end{pmatrix}\in \mathrm{U}(g,g)(\Z)~.$
\end{definition}

\subsection{Intertwining operators of Weil representations}\label{intertwining}

Let $(L,h)$ be a Hermitian lattice over $\cO_k$ as in the previous section and let $\fJ\subset L$ be a maximal isotropic sublattice. Then $M=\fJ^\bot/\fJ$ admits an induced Hermitian form, still denoted $h$, which is non-degenerate. The corresponding Weil representation will be denoted $\rho_{M,g}$. 

Let $H$ be the image of $\fJ_k\cap L^\vee$ in $L^\vee/L$ and let $H^\bot$ be its orthogonal with respect to the pairing on $L^\vee/L$. It is straightforward to check that the natural projection map $p:H^\bot\rightarrow M^\vee/M$ is surjective with kernel equal to $H$. Define the following maps:

\begin{align*}
p_{L}^{M}:\, \C[(L^{\vee}/L)^g]&\rightarrow  \C[(M^{\vee}/M)^g]\\ \numberthis \label{pull}
			\mathfrak{e}_{\underline\nu}&\mapsto \begin{cases}
                                 \mathfrak e_{p(\underline\nu)} & \text{if $\underline\nu \in (H^{\bot})^g$,}\\
                                0 & \text{otherwise,}
                                \end{cases}
\end{align*}

\begin{align*}
 p_{M}^{L}:\, \C[(M^{\vee}/M)^g]&\rightarrow \C[(L^{\vee}/L)^g]\\\numberthis\label{push}
			\mathfrak{e}_{\underline \mu}&\mapsto \sum_{\underline\nu\in H^{\bot},\,p(\nu)=\mu}\mathfrak{e}_{\nu}.		 
\end{align*}

The operators $p_{M}^{L}$ and $p_{L}^M$ intertwine the Weil representations on both sides, and hence we get a linear map between the corresponding spaces of vector-valued Hermitian modular forms. 

\subsection{Homogeneous polynomials and modularity}\label{homog}

A well-known method for constructing modular forms is via theta series of positive definite Hermitian lattices. In this section, we generalize this construction to {\it weighted} theta series, where the weighting functions are homogeneous polynomials. If the weighting function is not harmonic, we will obtain quasi-modular forms, which admit {\it completions} to non-holomorphic modular forms.
\medskip

\begin{definition}
    For integers $n\geq g\geq 1$, let $\cF_{n,g}$ denote the vector space of functions $P:\mathrm M_{n\times g}(\C)\rightarrow \C$, polynomial in the matrix entries and their conjugates, that satisfy: 
\[P(UA)=|\det(A)|^2P(U),\quad  \textrm{for all}\quad  U\in\mathrm{M}_{n\times g}(\C),\, A\in\mathrm{M}_g(\C)~.\]
For $g=0$, let $\cF_{n,0}=\C$.
\end{definition}

For convenience, a matrix $U\in\mathrm{M}_{n\times g}(\C)$ will be written in terms of its column vectors as $U=(\lambda_1,\ldots,\lambda_g)$ where $\lambda_i\in\C^n$ for $1\leq i\leq g$. A polynomial $P$ is in $\cF_{n,g}$ if and only if there exists a polynomial $Q$ in $X_1,\ldots,X_g,Y_1,\ldots,Y_g\in\C^n$ such that for all $A,B\in\mathrm{GL}_g(\C)$:
\begin{align}\label{eq:bi-invariance}
    Q\big((X_1,\ldots,X_g)\cdot A,(Y_1,\ldots,Y_g)\cdot B\big)=\det(A)\det(B)Q(X_1,\ldots,X_g,Y_1,\ldots,Y_g),\end{align}
and 
\[P(\lambda_1,\ldots,\lambda_g)=Q(\lambda_1,\ldots,\lambda_g,\overline{\lambda}_1,\ldots,\overline{\lambda}_g)~.\]

\begin{proposition}
We have $\dim_\C(\cF_{n,g}) = \binom{n}{g}^2$.
\end{proposition}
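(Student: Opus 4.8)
The plan is to reduce the statement to a classical fact about maximal minors, by passing through the holomorphic polynomials $Q$ attached to each $P\in\cF_{n,g}$ in the characterization recorded just above. The first task is to make that correspondence into a linear isomorphism. Substituting $A=cI_g$ into the defining relation gives $P(cU)=|c|^{2g}P(U)$, so each $P\in\cF_{n,g}$ is homogeneous of bidegree $(g,g)$ in $(U,\overline U)$, and hence its holomorphic extension $Q(X,Y)$ has bidegree $(g,g)$, i.e. degree $g$ in the block $X$ and degree $g$ in $Y$. The assignment $Q\mapsto P$, $P(U)=Q(U,\overline U)$, is injective: a holomorphic polynomial in the independent matrix variables $(X,Y)$ that vanishes on the real locus $Y=\overline X$ vanishes identically, since the entries of $U$ and of $\overline U$ are independent as real-analytic coordinates. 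I would also verify that the relation $P(UA)=|\det A|^2P(U)$ forces the full separate bi-invariance $Q(XA,YB)=\det(A)\det(B)Q(X,Y)$ for all $A,B$: for fixed $X,Y$ the difference of the two sides is a holomorphic polynomial $D(A,B)$ vanishing on the totally real locus $B=\overline A$, hence $\sum c_{\alpha\beta}A^\alpha\overline A{}^{\beta}\equiv 0$ forces all $c_{\alpha\beta}=0$ by the same independence argument. Thus $Q\mapsto P$ is a linear isomorphism from the space $\cQ_{n,g}$ of bidegree-$(g,g)$ bi-invariant polynomials onto $\cF_{n,g}$, and it suffices to compute $\dim\cQ_{n,g}$.

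Next I would factor $\cQ_{n,g}$ as a tensor product. Setting $B=I_g$ (resp. $A=I_g$) shows that the two transformation laws in $X$ and in $Y$ are separate and jointly equivalent to the full bi-invariance. Writing $\C[\mathrm M_{n\times g}\times\mathrm M_{n\times g}]=\C[X]\otimes\C[Y]$ and expanding $Q=\sum_i f_i(X)h_i(Y)$ with the $h_i$ linearly independent, the relation $Q(XA,Y)=\det(A)Q(X,Y)$ forces each $f_i$ into the space
\[
V=\{\,f\in\C[\mathrm M_{n\times g}] : f(XA)=\det(A)f(X)\ \text{ for all } A\,\},
\]
and the symmetric relation in $Y$ forces each $Y$-factor into the same $V$. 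Hence $\cQ_{n,g}=V\otimes V$, so $\dim\cF_{n,g}=(\dim V)^2$.

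The core computation is therefore $\dim V=\binom{n}{g}$. For each subset $S\subseteq\{1,\dots,n\}$ with $|S|=g$, the maximal minor $\Delta_S(X)=\det(X_S)$ (where $X_S$ is the $g\times g$ submatrix on the rows in $S$) satisfies $\Delta_S(XA)=\det(A)\Delta_S(X)$, and the $\Delta_S$ are linearly independent because distinct $S$ involve disjoint sets of matrix entries (the monomials of $\Delta_S=\sum_{\sigma\in\mathfrak S_g}\sgn(\sigma)\prod_j X_{s_{\sigma(j)},j}$ only use rows in $S$). This gives $\dim V\ge\binom ng$. For the reverse inclusion, take $A$ diagonal to see that any $f\in V$ is homogeneous of degree one in each column $\lambda_j$, hence multilinear in $(\lambda_1,\dots,\lambda_g)$; then take $A$ a transposition matrix to see that $f$ is alternating in the columns. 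An alternating multilinear form on $(\C^n)^g$ is an element of $\bigl(\Lambda^g\C^n\bigr)^\ast$, whose dimension is $\binom ng$ and whose natural basis is exactly $\{\Delta_S\}$; conversely every such form obeys $f(XA)=\det(A)f(X)$. Thus $V=\mathrm{span}\{\Delta_S\}$ and $\dim V=\binom ng$, giving $\dim\cF_{n,g}=\binom ng^2$. The degenerate case $g=0$ is just $\cF_{n,0}=\C$, consistent with $\binom n0^2=1$.

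The step I expect to demand the most care is the complexification in the first paragraph: upgrading the real-variable homogeneity of $P$ to the fully holomorphic, separately bi-invariant relation for $Q$, together with the bijectivity of $Q\mapsto P$. Once that ``separation of holomorphic and antiholomorphic variables'' is justified, the remainder is the elementary tensor-factorization of an isotypic component and the classical identification of $\det$-covariants on $\mathrm M_{n\times g}$ with the top exterior power $\Lambda^g(\C^n)^\ast$.
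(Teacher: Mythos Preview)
Your argument is correct and takes a genuinely different, more elementary route than the paper. The paper's proof is geometric: it identifies $\cF_{n,g}$ with global sections of a line bundle on (a real form of) the Grassmannian $X=\mathrm{Gr}(g,n)$, then uses the adjunction for Weil restriction $\Res_{\C/\R}$ and flat base change to obtain $\dim\cF_{n,g}=h^0(X\times X,\O_X(1)\boxtimes\O_X(1))=h^0(X,\O_X(1))^2=\binom{n}{g}^2$. Your approach is the explicit algebraic shadow of this: your space $V$ of $\det$-covariants on $\mathrm M_{n\times g}$ is exactly $H^0(\mathrm{Gr}(g,n),\O(1))$ (the Pl\"ucker coordinates are your maximal minors $\Delta_S$), and your tensor factorization $\cQ_{n,g}=V\otimes V$ is the K\"unneth decomposition for $X\times X$. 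The paper's argument is more conceptual and situates the computation in the geometric framework used elsewhere; yours is self-contained and avoids the machinery of restriction of scalars.

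One small slip in your complexification step: as written, ``for fixed $X,Y$ the difference $D(A,B)$ vanishes on $B=\overline A$'' is not justified for \emph{arbitrary} $X,Y$ --- you only know it when $Y=\overline X$ as well. The fix is to apply your independence argument to all four matrix variables at once: the holomorphic polynomial $R(X,Y,A,B)=Q(XA,YB)-\det(A)\det(B)\,Q(X,Y)$ vanishes on the totally real locus $(Y,B)=(\overline X,\overline A)$, hence identically. You correctly flagged this step as the delicate one, and with this adjustment the argument goes through.
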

\begin{proof}
    Let $X=\mathrm{Gr}(g,n)$ be the Grassmannian of $g$-dimensional subspaces of $\C^n$. The Weil restriction of scalars $\Res_{\C/\R}$ is right adjoint to the base change functor from $\R$ to $\C$, so we have a morphism of $\R$-varieties
    $$j:\Res_{\C/\R}(X) \to \Res_{\C/\R}(\Res_{\C/\R}(X)\times_\R \C)\simeq \Res_{\C/\R}(X\times X).$$
    Explicitly, this morphism is the graph of complex conjugation.
    Now, $X$ is given by a quotient of the open subset $\mathcal U\subset \mathrm{M}_{n\times g}(\C)$ of maximal rank matrices by the right action of $\GL_g(\C)$. The natural representation $\rho_g$ of $\GL_g(\C)$ induces a tautological bundle on $X$, whose determinant is the line bundle $\O_X(-1)$, where $\O_X(1)$ is the line bundle that defines the Pl\"{u}cker embedding. The $\C$-vector space $\cF_{n,g}$ can be interpreted as:
    $$\cF_{n,g} = \mathrm{H}^0(\Res_{\C/\R}(X),j^* \O_{\Res_{\C/\R}(X\times X)}(1,1))\otimes \C.$$
    To compute its dimension, we use flat base change:
    \begin{align*}
        \dim_\C(\cF_{n,g})
        &= h^0(X\times X, \O_X(1)\boxtimes \O_X(1))\\
        &= h^0(X,\O_X(1))^2 = \binom{n}{g}^2.
    \end{align*}

\end{proof}

\begin{lemma}\label{lemma:homogeneous}
    Let $P:\mathrm M_{n\times g}(\C)\rightarrow \C$ be a polynomial function in $(\lambda_1,\ldots,\lambda_g)$ and their complex conjugates. Then $P$ is an element of $\cF_{n,g}$ if and only if:
    \[\lambda_j\cdot\frac{\partial P}{\partial \lambda_i}=\delta_{i,j}P\quad , \quad \overline{\lambda}_j\cdot\frac{\partial P}{\partial \overline{\lambda}_i}=\delta_{i,j}P~.\]
\end{lemma}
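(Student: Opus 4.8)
The plan is to prove both implications by passing between the global transformation law and its infinitesimal form, using Wirtinger calculus to decouple the holomorphic variables $\lambda_i$ from the antiholomorphic variables $\overline\lambda_i$.

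\textbf{Necessity.} Suppose $P\in\cF_{n,g}$. First I would reduce the defining relation to $A\in\GL_g(\C)$: since for fixed $U$ both $A\mapsto P(UA)$ and $A\mapsto|\det A|^2P(U)$ are continuous (indeed real-polynomial) in $A$, and $\GL_g(\C)$ is dense in $\mathrm M_g(\C)$, it suffices to know the relation on the group. Then, for each elementary direction $X=E_{kl}\in\mathfrak{gl}_g(\C)$, I would substitute $A=\mathrm I_g+zX$ with $z\in\C$ a \emph{complex} parameter and differentiate the identity $P(U(\mathrm I_g+zX))=|\det(\mathrm I_g+zX)|^2P(U)$ at $z=0$. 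The $j$-th column of $U(\mathrm I_g+zX)$ is $\lambda_j+z\sum_k X_{kj}\lambda_k$, with the conjugate column governed by $\overline z$, so applying the Wirtinger operator $\partial/\partial z$ isolates the holomorphic contribution, and together with $\partial_z\big|_{z=0}|\det(\mathrm I_g+zX)|^2=\Tr(X)$ it yields $\sum_{k,l}X_{kl}\,\lambda_k\cdot\partial P/\partial\lambda_l=\Tr(X)\,P$. As $X$ is arbitrary, comparing the coefficient of each entry $X_{kl}$ gives $\lambda_k\cdot\partial P/\partial\lambda_l=\delta_{kl}P$; applying $\partial/\partial\overline z$ instead produces the conjugate family.

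\textbf{Sufficiency.} Conversely, assume the two families of Euler equations. Fix $U$ with $P(U)\neq 0$ and consider $R(A)=P(UA)\big/\!\left(|\det A|^2P(U)\right)$ on $\GL_g(\C)$. The key point is that the Euler equations compute the logarithmic derivative of $A\mapsto P(UA)$ along any one-parameter direction: writing $V=UA$ with columns $v_i$ and differentiating $P(V\exp(zX))$ at $z=0$, the chain rule produces $\sum_{k,l}X_{kl}\,v_k\cdot\partial P/\partial\lambda_l(V)$, which by the hypothesis (evaluated at the point $V$) equals $\Tr(X)\,P(V)$. Since this matches $\partial_z\big|_{z=0}\log|\det(A\exp(zX))|^2=\Tr(X)$ exactly, and likewise for $\partial_{\overline z}$ via the conjugate equations, both Wirtinger derivatives of $\log R$ vanish in every direction. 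Hence $R$ is locally constant on the connected group $\GL_g(\C)$, and $R(\mathrm I_g)=1$ forces $R\equiv 1$, i.e.\ $P(UA)=|\det A|^2P(U)$ whenever $A\in\GL_g(\C)$ and $P(U)\neq 0$. To remove the genericity assumptions I would argue by density: for fixed $A\in\GL_g(\C)$ the identity holds on $\{P(U)\neq 0\}$ (dense when $P\not\equiv 0$, the case $P\equiv 0$ being trivial), hence for all $U$ by continuity; and for fixed $U$ the two sides are continuous in $A$ and agree on $\GL_g(\C)$, dense in $\mathrm M_g(\C)$, hence agree for all $A$.

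The main obstacle is the sufficiency direction, namely upgrading the pointwise Euler equations to the global multiplicative transformation law. The clean route I expect to use is the logarithmic-derivative/connectedness argument above, which is in essence the representation-theoretic fact that a common $\mathfrak{gl}_g$-eigenvector realizing the differential of the $\det$-character integrates, on the connected group $\GL_g(\C)$, to a genuine $\det$-eigenvector. Care is then needed only in the bookkeeping that separates the holomorphic from the antiholomorphic variables (handled uniformly by the Wirtinger operators $\partial/\partial z$ and $\partial/\partial\overline z$) and in the two density arguments that extend the identity off the nonvanishing locus of $P$ and from $\GL_g(\C)$ to $\mathrm M_g(\C)$.
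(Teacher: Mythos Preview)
Your argument is correct, with one small wrinkle in the sufficiency direction: you phrase the key computation via $\log R$, but $\log R$ is only defined where $R\neq 0$, and a priori $P(UA)$ could vanish for some $A\in\GL_g(\C)$ even when $P(U)\neq 0$. The fix is immediate---the same chain-rule computation shows directly that $dR=0$ (not just $d\log R=0$) at every $A\in\GL_g(\C)$, so $R$ is constant on the connected group without any appeal to logarithms.

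The paper's proof is packaged differently. Rather than working with Wirtinger operators on $P$ itself, the paper first passes to the polynomial $Q$ in \emph{independent} variables $X_1,\ldots,X_g,Y_1,\ldots,Y_g$ satisfying the bi-invariance condition $Q(\underline{X}A,\underline{Y}B)=\det(A)\det(B)\,Q(\underline{X},\underline{Y})$, thereby decoupling the problem into two purely algebraic $\GL_g$-equivariance conditions, and then cites a result of Roehrig for the infinitesimal characterization of each. Your route is more self-contained: it handles the holomorphic and antiholomorphic sides in one stroke via $\partial/\partial z$ and $\partial/\partial\overline z$, and it supplies the integration step (Euler equations $\Rightarrow$ group law) explicitly via connectedness of $\GL_g(\C)$ rather than importing it from a reference. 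The paper's version is terser; yours has the advantage of not depending on an external citation and of making the Lie-theoretic mechanism visible.
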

\begin{proof}
    We can write:
    \[P(\lambda_1,\ldots,\lambda_g)=Q(\lambda_1,\ldots,\lambda_g,\overline{\lambda}_1,\ldots,\overline{\lambda}_g)\]
    where $Q(X_1,\ldots,X_g,Y_1,\ldots,Y_g)$ satisfies the condition in \Cref{eq:bi-invariance}. We then apply \cite[Proposition 3.4]{roehrig}
\end{proof}

Let $1\leq \ell\leq g$, and define \[\Delta_\ell= \prescript{t}{}{\left(\frac{\partial}{\partial \lambda_\ell}\right)}\cdot\frac{\partial}{\partial \overline{\lambda}_\ell}=\sum_{i=1}^{n}\frac{\partial^2}{\partial \lambda_\ell^{(i)}\overline{\partial \lambda}_\ell^{(i)}}\] to be the Laplacian with respect to the vector 
\[\lambda_\ell=\begin{pmatrix}\lambda_\ell^{(1)}\\\vdots\\\lambda_\ell^{(n)}\end{pmatrix}\in\C^n,\]
and where $\frac{\partial}{\partial \lambda_\ell}$ is the column vector of differential operators: \[\frac{\partial}{\partial \lambda_\ell}=\begin{pmatrix}\frac{\partial}{\partial\lambda_\ell^{(1)}}\\\vdots\\\frac{\partial}{\partial\lambda_\ell^{(n)}}\end{pmatrix}~.\] 

\begin{lemma}\label{l:laplacian}
   Let $P\in\cF_{n,g}$ and let $1\leq \ell\leq g$.
   \begin{enumerate}
       \item  The polynomial $\Delta_\ell P$ is independent from $\lambda_\ell $ and $\overline{\lambda}_\ell$. It defines a function $\Delta_\ell P\in\cF_{n,g-1}$ in the vectors $\lambda_1,\ldots,\widehat{\lambda}_\ell,\ldots, \lambda_g\in\C^n$.
       \item For any $1\leq s, \ell\leq g$, we have:
      $\Delta_s P=\Delta_\ell P$ in $\cF_{n,g-1}$ as functions of $g-1$ vectors.
   \end{enumerate}
\end{lemma}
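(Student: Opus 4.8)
The plan is to verify both assertions directly from the homogeneity characterization in \Cref{lemma:homogeneous}, isolating the genuinely symmetric-function content in part (2). For part (1), I would first note that the diagonal relations $\lambda_\ell\cdot\frac{\partial P}{\partial\lambda_\ell}=P$ and $\overline\lambda_\ell\cdot\frac{\partial P}{\partial\overline\lambda_\ell}=P$ (the cases $i=j=\ell$ of \Cref{lemma:homogeneous}) are exactly Euler relations, so $P$ is homogeneous of degree $1$ in the holomorphic coordinates $\lambda_\ell^{(1)},\dots,\lambda_\ell^{(n)}$ and of degree $1$ in the antiholomorphic coordinates $\overline\lambda_\ell^{(1)},\dots,\overline\lambda_\ell^{(n)}$. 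Hence $P=\sum_{a,b}\lambda_\ell^{(a)}\overline\lambda_\ell^{(b)}B_{ab}$ with each $B_{ab}$ free of $\lambda_\ell,\overline\lambda_\ell$, and therefore $\Delta_\ell P=\sum_a B_{aa}$ is manifestly independent of $\lambda_\ell$ and $\overline\lambda_\ell$; this gives the first assertion.

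To see that the resulting function of the remaining $g-1$ vectors lies in $\cF_{n,g-1}$, I would observe that for indices $i,j\neq\ell$ the operator $\lambda_j\cdot\frac{\partial}{\partial\lambda_i}$ acts only on the variables $\{\lambda_i,\lambda_j\}$, which are disjoint from $\{\lambda_\ell,\overline\lambda_\ell\}$, so it commutes with $\Delta_\ell$. Applying $\Delta_\ell$ to the identities $\lambda_j\cdot\frac{\partial P}{\partial\lambda_i}=\delta_{i,j}P$ and their conjugates then yields $\lambda_j\cdot\frac{\partial(\Delta_\ell P)}{\partial\lambda_i}=\delta_{i,j}\Delta_\ell P$ together with the conjugate identity, which are precisely the conditions of \Cref{lemma:homogeneous} for the $g-1$ variables $\lambda_1,\dots,\widehat{\lambda}_\ell,\dots,\lambda_g$. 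This places $\Delta_\ell P$ in $\cF_{n,g-1}$.

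For part (2), the key input is that $P$ is invariant under permuting its columns: a permutation matrix $\sigma\in\GL_g(\C)$ has $|\det\sigma|^2=1$, so $P(UP_\sigma)=P(U)$. I would first treat the adjacent transposition $\sigma$ swapping columns $\ell$ and $\ell+1$, writing $R_\sigma$ for the induced column swap on functions. A direct chain-rule computation gives $\Delta_\ell(R_\sigma P)=R_\sigma(\Delta_{\ell+1}P)$, and since $R_\sigma P=P$ this reads $\Delta_\ell P=R_\sigma(\Delta_{\ell+1}P)$. Because $\Delta_{\ell+1}P$ does not involve $\lambda_{\ell+1}$ while $\Delta_\ell P$ does not involve $\lambda_\ell$ (part (1)), unwinding $R_\sigma$ shows that the two polynomials coincide once their $g-1$ arguments are listed in increasing index order, i.e.\ they are equal as elements of $\cF_{n,g-1}$. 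Composing adjacent transpositions then gives $\Delta_s P=\Delta_\ell P$ for all $s,\ell$.

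I expect the only delicate point to be part (2): one must track carefully which vector occupies which slot after the column swap, since $\Delta_\ell P$ and $\Delta_s P$ a priori depend on different sets of variables, and the claimed equality is an equality of elements of $\cF_{n,g-1}$ under the order-preserving identification of the remaining columns. Parts (1)(a) and (1)(b) are routine once the bidegree-$(1,1)$ structure and the commutation of disjoint-variable operators are in hand. An alternative, more computational route to both parts would be to invoke the classification $Q=\sum_{I,J}c_{IJ}\det(X_I)\det(Y_J)$ into products of maximal minors forced by \Cref{eq:bi-invariance}, expand each $\Delta_\ell(\det(\Lambda_I)\overline{\det(\Lambda_J)})$ by cofactors, and compare; but the symmetry argument above avoids this bookkeeping entirely.
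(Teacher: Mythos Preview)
Your proof is correct and follows essentially the same strategy as the paper: bidegree $(1,1)$ in each pair $(\lambda_\ell,\overline\lambda_\ell)$ shows $\Delta_\ell P$ drops that column, and permutation invariance of $P$ handles part (2). The only cosmetic difference is that you argue throughout via the differential characterization of \Cref{lemma:homogeneous} and operator commutation, whereas the paper works directly with the auxiliary polynomial $Q$ of \eqref{eq:bi-invariance} and verifies the $\GL_{g-1}$-equivariance using block-diagonal matrices.
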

\begin{proof}
 For $(1)$, without loss of generality, we may assume that $\ell=g$, and fix $\lambda_1,\ldots,\lambda_{g-1}\in\C^n$.  For $\lambda_g
\in \C^n$, we can write: 
 \[P(\lambda_1,\ldots,\lambda_g)=Q(\lambda_1,\ldots,\lambda_g,\overline{\lambda}_1,\ldots,\overline{\lambda}_g)~.\]
 For any complex number $u,v\in \C$, the invariance property of $Q$ yields:
 \[Q(X_1,\ldots,u\cdot X_g,Y_1,\ldots,v\cdot Y_g)=uv\cdot Q(X_1,\ldots,X_g,Y_1,\ldots,Y_g)\]
Therefore, for any $1\leq s \leq n$, $\frac{\partial^2 Q}{\partial X_g^{(s)}\partial Y_g^{(s)} }$ is independent of $X_g$ and $Y_g$. Since   
\[\Delta_gP(\lambda_1,\ldots,\lambda_g)=\sum_{s=1}^n\frac{\partial^2 Q}{\partial X_g^{(s)}\partial Y_g^{(s)} }(\lambda_1,\ldots,\lambda_g,\overline{\lambda}_1,\ldots,\overline{\lambda}_g)~,\]
 we conclude that $\Delta_gP$ is independent of $\lambda _g$ and $\overline{\lambda}_g$ and is polynomial in $\lambda_i,\overline{\lambda}_i$ for $i<g$. We will simply write $\Delta_gP(\lambda_1,\ldots,\lambda_{g-1})$.
 \medskip
  
We will now show that $\Delta_g P\in \cF_{n,g-1}$. Let $A,B\in\mathrm{GL}_{g-1}(\C) $ and define $A_0=\begin{pmatrix}
    A&0\\0&1
\end{pmatrix}\in \mathrm{GL}_g(\C)$. We define similarly $B_0\in \mathrm{GL}_g(\C)$.
Then for any $X_g,Y_g \in\C^g$, we have:
\begin{multline*}
        Q\left[(X_1,\ldots,X_{g-1})\cdot A,X_g,(Y_1,\ldots,Y_{g-1})\cdot B,Y_g\right]\\=Q\left[(X_1,\ldots,X_g)\cdot A_0,(Y_1,\ldots,Y_g)\cdot B_0\right]~\\
    =\det(A_0)\det(B_0)Q(X_1,\ldots,X_g,Y_1,\ldots,Y_g)\\
    =\det(A)\det(B)Q(X_1,\ldots,X_g,Y_1,\ldots,Y_g)~.
\end{multline*}
We conclude by taking the derivative of both sides. 
\medskip

For $(2)$, without loss of generality, we can assume that $s=g$ and $\ell=g-1$. Let $\sigma$ be the matrix of the transposition $(g-1,g)$. Let $u_1,\ldots,u_{g-1},\lambda_g\in \C^n$. The invariance property applied to $\sigma$ ensures that: 
\[P(u_1,\ldots,u_{g-1},\lambda_g)=P(u_1,\ldots,\lambda_g,u_{g-1})~.\]
Taking the Laplacian with respect to $\lambda_g$, we get: 
\[\Delta_g P(u_1,\ldots,u_{g-1})=\Delta_{g-1}P(u_1,\ldots,u_{g-1})~,\]
which is the desired result.
\end{proof}

\Cref{l:laplacian} above ensures that there exists a well-defined lowering operator for $1\leq g \leq n$:
\[\Delta_g:\cF_{n,g}\rightarrow \cF_{n,g-1}~.\] 

Setting $\cF_{n,\bullet}=\bigoplus_{\ell=0}^{n}\cF_{n,\ell}$, we can define the following operator:  
\[\
\Delta:\cF_{n,\bullet}\rightarrow \cF_{n,\bullet}~,\]
which acts as $\Delta_g$ on $\cF_{n,g}$ for $1\leq g\leq n$, and as 0 on $\cF_{n,0}$. We think of $\Delta$ as a lowering operator on $\cF_{n,\bullet}$ and we will now define a raising operator as part of an eventual $\mathfrak s \mathfrak l_2$-triple.\medskip

Let $P\in\cF_{n,g-1}$ and let $\lambda_1,\ldots,\lambda_g\in \C^n$. Consider the $g\times g$ matrix $M$ whose $i^\textrm{th}$  diagonal coefficient is:
\[m_{ii}:=P(\lambda_1,\ldots,\widehat{\lambda}_i,\ldots,\lambda_g)\]

and whose coefficient $i\neq j$ is:
\[m_{ij}=-\prescript{t}{}{\lambda_i}\cdot\frac{\partial m_{ii}}{\partial\lambda_j}=\sum_{s=1}^n{\lambda}_i^{(s)}\frac{\partial m_{ii}}{\partial \lambda_j^{(s)}}~.\]

Using the polynomial $Q$ associated to $P$ as in \Cref{eq:bi-invariance} and applying the invariance property for permutation matrices, we can write the $m_{ij}$ as follows:
\begin{align}\label{crucial}
   m_{ij}&=-Q(\lambda_1,\ldots,\widehat{\lambda}
   _i,\ldots,\underbrace{\lambda_i}_{(j-1)^{th}\, \textrm{position}},\ldots,\lambda_{g},\overline{\lambda}_1,\ldots,\widehat{\overline{\lambda}}_i,\ldots,\overline{\lambda}_{g})\\
   &=(-1)^{i+j}Q([\lambda]^g_j,[\overline{\lambda}]^g_i),
\end{align}
where we introduce the following notation: for $g$ vectors $\mu_1,\ldots,\mu_g\in \C^n$: $[\mu]_i^g$ is the ordered set of $g-1$ vectors $(\mu_1,\ldots,\widehat{\mu}_i,\ldots\mu_g)$ skipping $\mu_i$.

The lemma below gives some properties of the coefficients $m_{k,\ell}$ that we will need.
\begin{lemma}\label{Lemma:differential-relations}
   Let $1\leq k,\ell\leq g$. Then:
\begin{enumerate}
\item $\lambda_i\cdot\frac{\partial m_{\ell,k}}{\partial\lambda_i}=(1-\delta_{i k})m_{\ell k}$. 

\item If $j\neq i$, then $\lambda_j\cdot\frac{\partial m_{\ell k}}{\partial\lambda_i}=-\delta_{kj}m_{\ell,i}$.

\end{enumerate}
\end{lemma}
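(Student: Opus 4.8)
The plan is to reduce both identities to the homogeneity and antisymmetry of the auxiliary polynomial $Q$ attached to $P$ through \eqref{eq:bi-invariance}, feeding in the closed form of the matrix entries recorded in \eqref{crucial}, namely $m_{\ell k}=(-1)^{\ell+k}Q([\lambda]^g_k,[\overline\lambda]^g_\ell)$. First I would extract the infinitesimal consequences of the bi-invariance \eqref{eq:bi-invariance}: specializing $A$ to be diagonal and to $\mathrm I+uE_{pq}$ with $p\neq q$ (and $B=\mathrm I$, together with the symmetric choices in $B$) gives
\[X_p\cdot\frac{\partial Q}{\partial X_q}=\delta_{pq}\,Q,\qquad Y_p\cdot\frac{\partial Q}{\partial Y_q}=\delta_{pq}\,Q,\]
which is \Cref{lemma:homogeneous} applied to $Q$, while specializing $A$ and $B$ to permutation matrices shows that $Q$ is alternating separately in its $X$-arguments and its $Y$-arguments. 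In particular $m_{\ell k}$ does not involve $\lambda_k$ (the omitted $X$-slot) and is homogeneous of degree one in each remaining $\lambda_m$. Throughout I use the order-preserving bijection $\pi_k\colon\{1,\dots,g\}\setminus\{k\}\to\{1,\dots,g-1\}$ that records the slot of $\lambda_m$ inside $[\lambda]^g_k$.

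For part (1), I would split on whether $i=k$. If $i=k$ the left-hand side vanishes because $m_{\ell k}$ is independent of $\lambda_k$, matching $(1-\delta_{kk})m_{\ell k}=0$. If $i\neq k$, then $\lambda_i$ sits in the single $X$-slot $\pi_k(i)$, and the Euler relation $X_p\cdot\partial_{X_p}Q=Q$ gives $\lambda_i\cdot\partial_{\lambda_i}m_{\ell k}=m_{\ell k}=(1-\delta_{ik})m_{\ell k}$.

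For part (2) I would again argue by cases. If $i=k$ the statement is immediate: $m_{\ell k}$ is independent of $\lambda_i=\lambda_k$, while $\delta_{kj}=\delta_{ij}=0$ since $j\neq i$. If $i\neq k$ and $j\neq k$, then $\lambda_j$ is the $X$-slot value $X_{\pi_k(j)}$, so the mixed relation together with injectivity of $\pi_k$ yields $\lambda_j\cdot\partial_{\lambda_i}m_{\ell k}=(-1)^{\ell+k}\delta_{\pi_k(i),\pi_k(j)}Q=\delta_{ij}m_{\ell k}=0$, in agreement with $-\delta_{kj}m_{\ell i}=0$.

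The case $i\neq k$, $j=k$ carries the real content and is the step I expect to be the main obstacle. Here $\lambda_k$ is precisely the vector absent from the $X$-slots of $m_{\ell k}$, so the relations above do not apply directly; instead I would use that $m_{\ell k}$ is linear in $\lambda_i$, so contracting $\lambda_k$ against $\partial_{\lambda_i}$ simply substitutes $\lambda_k$ for $\lambda_i$ in slot $\pi_k(i)$. The resulting $X$-arguments are then exactly the vectors $(\lambda_m)_{m\neq i}$, but in a non-standard order; restoring the standard order appearing in $m_{\ell i}=(-1)^{\ell+i}Q([\lambda]^g_i,[\overline\lambda]^g_\ell)$ costs a sign $(-1)^{|k-i|-1}$ from the alternating property of $Q$. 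Collecting the three contributions $(-1)^{\ell+k}$, $(-1)^{|k-i|-1}$, and $(-1)^{\ell+i}$, the total exponent $2\ell+k+i+|k-i|-1$ is odd because $k+i+|k-i|=2\max(i,k)$ is even, so the sign is $-1$ and $\lambda_k\cdot\partial_{\lambda_i}m_{\ell k}=-m_{\ell i}=-\delta_{kk}m_{\ell i}$. The only delicate point is the transposition count in the reordering, which I would carry out separately for $i<k$ and $i>k$; everything else is a direct application of the homogeneity relations.
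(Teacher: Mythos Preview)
Your proof is correct and follows essentially the same approach as the paper: both rely on the Euler-type relations coming from the bi-invariance \eqref{eq:bi-invariance} and the closed formula \eqref{crucial}, and in the decisive case $j=k$, $i\neq k$ both arguments substitute $\lambda_k$ into the $\pi_k(i)$-slot of $Q$ and collect the sign $(-1)^{\ell+k}\cdot(-1)^{|k-i|-1}\cdot(-1)^{\ell+i}=-1$. The only organizational difference is that the paper splits part~(2) according to whether $k=\ell$ or $k\neq\ell$ (handling the diagonal case via \Cref{lemma:homogeneous} applied to $P$ directly), whereas you work uniformly with \eqref{crucial}; your treatment is slightly cleaner in this respect.
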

\begin{proof}
   For $(1)$, we distinguish two cases: if $i=k$ then $m_{\ell,k}$ does not depend on $\lambda_i$, thus $\lambda_i\cdot\frac{\partial m_{\ell,k}}{\partial\lambda_i}=0$. If $i\neq k$, then $m_{\ell,k}$ depends linearly on $\lambda_i$ and therefore $\lambda_i\cdot\frac{\partial m_{\ell,k}}{\partial\lambda_i}=m_{\ell,k}$.
   
    For $(2)$, we distinguish two cases: if $k=\ell$, then $m_{\ell,k}=P(\lambda_1,\ldots,\widehat{\lambda}_\ell,\ldots,\lambda_g)$. Therefore,

    \[\lambda_j\cdot \frac{\partial m_{\ell k}}{\partial \lambda_i}=\lambda_j\cdot \frac{\partial }{\partial \lambda_i}(P(\lambda_1,\ldots,\widehat{\lambda}_\ell,\ldots,\lambda_g))~,\]  which $0$ if $\ell=i$ (and therefore $k\neq j$). If $\ell \neq i$ and $k\neq j$, then we get $0$ by \Cref{lemma:homogeneous}. Finally, if $\ell=j(=k)$, then we get $-m_{j,i}=-m_{\ell,i}$.

    Assume now that $k\neq \ell$, then $m_{\ell,k}=-\lambda_{\ell}\cdot\frac{\partial m_{\ell\ell}}{\partial \lambda_k}$. If $j\neq k$, then $\lambda_j\cdot \frac{\partial m_{\ell k}}{\partial \lambda_i}=0$, as $m_{\ell,k}$ either does not depend on $\lambda_i$, or we can apply \Cref{lemma:homogeneous}.
    Therefore, we may assume that $j=k$. By applying  \Cref{eq:bi-invariance} to permutation matrices, we get:
    \begin{align*}
        \lambda_j\cdot \frac{\partial m_{\ell k}}{\partial \lambda_i}&=\lambda_k\cdot \frac{\partial }{\partial \lambda_i}((-1)^{\ell+k}Q([\lambda]^g_k,[\overline \lambda]^g_\ell ))\\
        &= (-1)^{\ell+k}(-1)^{i+k-1}Q([\lambda]^g_i,[\overline \lambda]^g_\ell ))\\
        &=-(-1)^{i+\ell}Q([\lambda]^g_i,[ \overline \lambda]^g_\ell ))\\
        &=-m_{\ell,i}~,
    \end{align*}

    which is the desired result.


\end{proof}

Let $U=(\lambda_1,\ldots,\lambda_g)\in(\C^n)^g\simeq \mathrm{M}_{n\times g}(\C)$  be the matrix of coordinates of the $\lambda _i$'s and define:
\[\Lambda (P)(\lambda_1,\ldots,\lambda_g)=\trace\left({^t{U}}\overline{U}M\right) ~,\]
Notice that \[{^tU}\overline U={h_{\mathrm{std}}}(\underline \lambda)=(h_{\mathrm{std}}(\lambda_i,\lambda_j))_{1\leq i,j\leq g}\] is the Gram matrix of the $g$-tuple $(\lambda_1,\dots,\lambda_g)$ with respect to the standard Hermitian pairing on $\C^n$. 
\begin{proposition}\label{p:belonging}
    We have $\Lambda (P)\in \cF_{n,g}$.
\end{proposition}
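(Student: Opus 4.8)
The plan is to verify the two first-order differential equations characterizing membership in $\cF_{n,g}$ given in \Cref{lemma:homogeneous}, namely
\[\lambda_j\cdot\tfrac{\partial}{\partial\lambda_i}\Lambda(P)=\delta_{ij}\,\Lambda(P)\qquad\text{and}\qquad \overline\lambda_j\cdot\tfrac{\partial}{\partial\overline\lambda_i}\Lambda(P)=\delta_{ij}\,\Lambda(P)\]
for all $1\le i,j\le g$; since $\Lambda(P)$ is manifestly a polynomial in the entries of the $\lambda_i$ and their conjugates, these equations are exactly what is needed. Writing the Gram matrix $G={}^tU\overline U$ with entries $G_{k\ell}=h_{\mathrm{std}}(\lambda_k,\lambda_\ell)$, I would expand $\Lambda(P)=\trace(GM)=\sum_{k,\ell}G_{k\ell}\,m_{\ell k}$ and apply the Leibniz rule to each operator, separating the contribution of the Gram factor from that of the entry $m_{\ell k}$.

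For the holomorphic equation, the Gram factor contributes $\lambda_j\cdot\tfrac{\partial}{\partial\lambda_i}G_{k\ell}=\delta_{ki}\,G_{j\ell}$, so its total contribution is $\sum_\ell G_{j\ell}\,m_{\ell i}$. The entry factor is governed by \Cref{Lemma:differential-relations}: when $i=j$, part (1) gives $\lambda_i\cdot\tfrac{\partial}{\partial\lambda_i}m_{\ell k}=(1-\delta_{ik})m_{\ell k}$, whose sum against $G_{k\ell}$ equals $\Lambda(P)-\sum_\ell G_{i\ell}m_{\ell i}$, and adding the Gram contribution recovers exactly $\Lambda(P)$; when $i\neq j$, part (2) gives $\lambda_j\cdot\tfrac{\partial}{\partial\lambda_i}m_{\ell k}=-\delta_{kj}m_{\ell i}$, whose sum is $-\sum_\ell G_{j\ell}m_{\ell i}$, cancelling the Gram contribution to give $0$. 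This is precisely the holomorphic relation.

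The anti-holomorphic equation I would handle in the same way, the only new ingredient being the conjugate analogues of \Cref{Lemma:differential-relations}. Because the bi-invariance \eqref{eq:bi-invariance} is symmetric in the holomorphic block $(X_1,\dots,X_g)$ and the anti-holomorphic block $(Y_1,\dots,Y_g)$, and because \eqref{crucial} exhibits $m_{\ell k}$ as depending on $[\lambda]^g_k$ holomorphically and on $[\overline\lambda]^g_\ell$ antiholomorphically, repeating the proof of \Cref{Lemma:differential-relations} verbatim in the conjugate variables yields $\overline\lambda_i\cdot\tfrac{\partial}{\partial\overline\lambda_i}m_{\ell k}=(1-\delta_{i\ell})m_{\ell k}$ and, for $j\neq i$, $\overline\lambda_j\cdot\tfrac{\partial}{\partial\overline\lambda_i}m_{\ell k}=-\delta_{\ell j}m_{ik}$ — the same relations with the roles of the two indices of $m_{\ell k}$ exchanged. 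Combined with $\overline\lambda_j\cdot\tfrac{\partial}{\partial\overline\lambda_i}G_{k\ell}=\delta_{\ell i}G_{kj}$, the identical bookkeeping produces $\Lambda(P)$ when $i=j$ and $0$ otherwise, and \Cref{lemma:homogeneous} then gives $\Lambda(P)\in\cF_{n,g}$.

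The step I expect to require the most care is this index swap in the anti-holomorphic mirror. Since the off-diagonal entries of $M$ are defined using \emph{holomorphic} derivatives, the matrix is not symmetric under conjugation, so one must track that the distinguished index for the conjugate variables is the first index $\ell$ of $m_{\ell k}$ rather than the second index $k$ (which governs the holomorphic derivatives). Once the conjugate analogues of \Cref{Lemma:differential-relations} are recorded with this correct convention, the two verifications are formally identical and the proof concludes.
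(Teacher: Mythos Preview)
Your proof is correct and follows essentially the same approach as the paper: verify the differential criterion of \Cref{lemma:homogeneous} by expanding $\Lambda(P)=\sum_{k,\ell}G_{k\ell}\,m_{\ell k}$, applying the Leibniz rule, and invoking \Cref{Lemma:differential-relations} for the $m_{\ell k}$ contributions in the cases $i=j$ and $i\neq j$. In fact you are more explicit than the paper on the anti-holomorphic side, where the paper simply writes ``We compute similarly''; your observation that the conjugate analogues of \Cref{Lemma:differential-relations} hold with the roles of the two indices of $m_{\ell k}$ swapped is exactly the content hidden behind that phrase.
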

\begin{proof}
It is enough to check that $\Lambda P$ satisfies the conditions of  \Cref{lemma:homogeneous}. We can expand 

\[\Lambda(P)=\sum_{1\leq \ell,s\leq g}h(\lambda_s,\lambda_\ell)m_{\ell,s}~.\]

Let $1\leq i,j\leq g$, then:
\begin{align*}
    \lambda_j\cdot\frac{\partial \Lambda(P)}{\partial \lambda_i}=\sum_{1\leq \ell,k\leq g} \lambda_j\cdot\frac{\partial}{\partial \lambda_i}\left(h(\lambda_k,\lambda_\ell)\right)m_{\ell,k}+\sum_{1\leq \ell,k\leq g} h(\lambda_k,\lambda_\ell)\lambda_j\cdot\frac{\partial m_{\ell,k}}{\partial \lambda_i}~.
\end{align*}
Since $h$ is $\C$-linear in the first variable, we have: \[\lambda_j\cdot\frac{\partial}{\partial\lambda_i}(h(\lambda_k,\lambda_\ell))=\delta_{ik}h(\lambda_j,\lambda_\ell)~.\]
We distinguish two cases: assume first that $j=i$. Then by \Cref{Lemma:differential-relations}, (1), we have:
\[\lambda_i\cdot\frac{\partial m_{\ell,k}}{\partial\lambda_i}=(1-\delta_{i k})m_{\ell k}~.\]
Therefore: 
\begin{align*}
    \lambda_i\cdot\frac{\partial \Lambda(P)}{\partial \lambda_i}&=\sum_{1\leq \ell,k\leq g} \delta_{ik}h(\lambda_k,\lambda_\ell)m_{\ell,k}+\sum_{1\leq \ell,k\leq g}(1-\delta_{ik}) h(\lambda_k,\lambda_\ell)m_{\ell,k}\\
    &=\sum_{1\leq \ell,k\leq g} h(\lambda_k,\lambda_\ell)m_{\ell,k}\\
    &= \Lambda P~.
\end{align*}

We now treat the case $i\neq j$. By \Cref{Lemma:differential-relations} (2), we have: $\lambda_j\cdot\frac{\partial m_{\ell,k}}{\partial \lambda_i}=0$ if $k\neq j$ and otherwise it is equal to $-m_{\ell,i}$. Therefore, we get:
\begin{align*}
 \lambda_j\cdot\frac{\partial \Lambda(P)}{\partial \lambda_i}&=\sum_{1\leq \ell\leq g} h(\lambda_j,\lambda_\ell)m_{\ell,i}+\sum_{{1\leq \ell\leq g}} h(\lambda_j,\lambda_\ell)\lambda_j\cdot\frac{\partial m_{\ell,j}}{\partial \lambda_i}\\
 &=\sum_{1\leq \ell\leq g} h(\lambda_j,\lambda_\ell)\left[m_{\ell,i}+\lambda_j\cdot\frac{\partial m_{\ell,j}}{\partial \lambda_i}\right]\\
 &=0~.\\
\end{align*}
We compute similarly $\overline{\lambda}_i\cdot\frac{\partial}{\partial \overline{\lambda_j}}\Lambda(P)=\delta_{ij}\Lambda P$. By \Cref{lemma:homogeneous}, we get the desired result.
\end{proof}
Finally, define the following operator on $\cF_{n,\bullet}$ \[H=-n+\sum_{i=1}^{n}{ ^{t}}\lambda_i\cdot\frac{\partial}{\partial\lambda_i}+\sum_{i=1}^{n}{ ^{t}}\overline{\lambda}_i\cdot\frac{\partial}{\partial\overline{\lambda_i}}\]
\begin{theorem}\label{proposition-sl2}
    The triple $(\Lambda,\Delta,H)$ is a $\mathfrak{sl}_2$-triple. In other words, the following relations hold:
    \[[\Lambda,\Delta]=H,\quad  [H,\Lambda]= 2\Lambda,\quad [H,\Delta]=-2\Delta~.\]
\end{theorem}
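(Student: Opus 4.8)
The plan is to exploit the fact that $H$ equips $\cF_{n,\bullet}$ with a grading. First I would record that $H$ acts as a scalar on each summand: applying the diagonal case $i=j$ of \Cref{lemma:homogeneous} to $P\in\cF_{n,g}$ gives $\sum_{i=1}^{g}{}^{t}\lambda_i\cdot\frac{\partial P}{\partial\lambda_i}=gP$ and likewise $\sum_{i=1}^{g}{}^{t}\overline\lambda_i\cdot\frac{\partial P}{\partial\overline\lambda_i}=gP$, whence $HP=(2g-n)P$. Since $\Lambda$ raises the grading by one (\Cref{p:belonging}) and $\Delta$ lowers it by one (\Cref{l:laplacian}), the two weight relations follow at once: for $P\in\cF_{n,g}$ one gets $[H,\Lambda]P=\big((2(g+1)-n)-(2g-n)\big)\Lambda P=2\Lambda P$ and $[H,\Delta]P=\big((2(g-1)-n)-(2g-n)\big)\Delta P=-2\Delta P$.

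The entire content therefore lies in the relation $[\Lambda,\Delta]=H$, which I would verify by a direct computation on a fixed $P\in\cF_{n,g}$, namely by showing $\Delta\Lambda P-\Lambda\Delta P=(n-2g)P$. By \Cref{l:laplacian} I am free to compute $\Delta$ with respect to the last variable, so I write $\Delta\Lambda P=\Delta_{g+1}(\Lambda P)$ and expand $\Lambda P=\sum_{1\le\ell,s\le g+1}h(\lambda_s,\lambda_\ell)\,m_{\ell s}$, where the $m_{\ell s}$ are the entries of the $(g+1)\times(g+1)$ matrix attached to $P$. I then split the sum according to whether $\ell$ or $s$ equals $g+1$. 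In the block $\ell,s\le g$ the factor $h(\lambda_s,\lambda_\ell)$ is independent of $\lambda_{g+1}$ and $\overline\lambda_{g+1}$, so $\Delta_{g+1}$ only hits $m_{\ell s}$; using that $\Delta_{g+1}$ commutes with the operators ${}^{t}\lambda_\ell\cdot\frac{\partial}{\partial\lambda_s}$ that define the off-diagonal entries, together with $\Delta_{g+1}m_{ss}=(\Delta P)(\lambda_1,\ldots,\widehat{\lambda}_s,\ldots,\lambda_g)$, this block reassembles exactly into $\Lambda(\Delta P)$.

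It then remains to see that the index-$(g+1)$ terms contribute $(n-2g)P$. The diagonal term equals $|\lambda_{g+1}|^2\,P$ since $m_{g+1,g+1}=P$ is free of $\lambda_{g+1}$, and contributes $(\Delta_{g+1}|\lambda_{g+1}|^2)\,P=nP$. In each mixed term $h(\lambda_{g+1},\lambda_\ell)m_{\ell,g+1}$ with $\ell\le g$ the Laplacian leaves only the cross term ${}^{t}\overline\lambda_\ell\cdot\frac{\partial}{\partial\overline\lambda_{g+1}}m_{\ell,g+1}$, and in each $h(\lambda_s,\lambda_{g+1})m_{g+1,s}$ with $s\le g$ it leaves ${}^{t}\lambda_s\cdot\frac{\partial}{\partial\lambda_{g+1}}m_{g+1,s}$; by \Cref{Lemma:differential-relations}(2), and its antiholomorphic analogue, each of these collapses to $-m_{g+1,g+1}=-P$, for a total of $-2gP$. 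Adding the three contributions gives $\Delta\Lambda P=\Lambda\Delta P+(n-2g)P$, i.e.\ $[\Lambda,\Delta]P=(2g-n)P=HP$.

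I expect the main obstacle to be exactly the bookkeeping in the previous paragraph: one must keep careful track of which holomorphic and antiholomorphic variables each of the factors $h(\lambda_s,\lambda_\ell)$ and $m_{\ell s}$ depends on, so as to isolate the surviving cross terms, and one needs the antiholomorphic counterpart of \Cref{Lemma:differential-relations}, which is not stated explicitly but follows verbatim from the same permutation-invariance argument applied to the second matrix argument in \Cref{eq:bi-invariance}. Once the grading and these relations are in hand, the verification is entirely formal.
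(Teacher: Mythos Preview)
Your proposal is correct and follows essentially the same computation as the paper. You organize the argument a bit more cleanly: you first pin down $H$ as the scalar $2g-n$ on $\cF_{n,g}$ via \Cref{lemma:homogeneous} and deduce the two weight relations from the grading shift, whereas the paper simply declares these ``straightforward''; for the commutator $[\Lambda,\Delta]$ you perform the same expansion of $\Delta_{g+1}(\Lambda P)$, split by whether indices equal $g+1$, and identify the $(\le g)$-block with $\Lambda(\Delta P)$ and the boundary terms with $nP-2gP$, exactly as in the paper's five itemized simplifications. Your remark that the antiholomorphic analogue of \Cref{Lemma:differential-relations} is needed (and follows from the same permutation argument applied to the second slot of \Cref{eq:bi-invariance}) is the one point the paper leaves implicit as well.
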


\begin{proof}
Let $P\in \cF_{n,g}$ and let $P_g=\Delta_g P$. We have: 
\begin{align}\label{eq:second-side}
\Lambda\circ\Delta_g (P)=\sum_{1\leq k,\ell\leq g}h(\lambda_k,\lambda_\ell)m_{\ell,k}(P_g)~.
\end{align}
On the other hand, we have:
 \begin{align*}
       \Delta_{g+1}\circ\Lambda( P)&= \Delta_{g+1}\left[\sum_{1\leq k,\ell\leq g+1}h(\lambda_k,\lambda_\ell)m_{\ell,k}(P)\right]\\
       &= \sum_{1\leq k,\ell\leq g+1}\Delta_{g+1}(h(\lambda_k,\lambda_\ell))m_{\ell,k}(P)+\sum_{1\leq k,\ell\leq g+1}h(\lambda_k,\lambda_\ell)\Delta_{g+1}(m_{\ell,k}(P))\\&+\sum_{1\leq k,\ell\leq g+1}\left[{\frac{^t\partial}{\partial \lambda_{g+1}}}h(\lambda_k,\lambda_\ell)\frac{\partial}{\partial \overline{\lambda}_{g+1}}m_{\ell,k}(P)+{\frac{^t\partial}{\partial \overline{\lambda}_{g+1}}}h(\lambda_k,\lambda_\ell)\frac{\partial}{\partial \lambda_{g+1}}m_{\ell,k}(P)\right]\\
       &= nP+\sum_{1\leq k,\ell\leq g+1}h(\lambda_k,\lambda_\ell)\Delta_{g+1}(m_{\ell,k}(P))\\&+\sum_{1\leq k,\ell\leq g+1}\left[{\frac{^t\partial}{\partial \lambda_{g+1}}}h(\lambda_k,\lambda_\ell)\frac{\partial}{\partial \overline{\lambda}_{g+1}}m_{\ell,k}(P)+{\frac{^t\partial}{\partial \overline{\lambda}_{g+1}}}h(\lambda_k,\lambda_\ell)\frac{\partial}{\partial \lambda_{g+1}}m_{\ell,k}(P)\right]~,\\
       \end{align*}
 In the last line and in what follows, we use the following simplifications: 
 \begin{enumerate}
     \item $m_{g+1,g+1}(P)=P$, $\Delta_{g+1}(h(\lambda_k,\lambda_\ell))=n$, if  $k=\ell=g+1$ and $0$ otherwise. 
     \item $\Delta_{g+1}(m_{g+1,k})=\Delta_{g+1}(m_{\ell,g+1})=0$ for all $1\leq \ell,k\leq g+1$, as $m_{g+1,k}$ and $m_{\ell,g+1}$ depend linearly on $\lambda_{g+1}$ and $\overline \lambda_{g+1}$ respectively.  
    \item For $\ell,k<g+1$, we have $\Delta_{g+1}(m_{\ell,k})=m_{\ell,k}(P_g)~,$ by commutation of derivatives.
    \item For $\ell,k<g+1$, using \cref{crucial}, one can check that:
    \[{\frac{^t\partial \left( h(\lambda_k,\lambda_\ell)\right)}{\partial \lambda_{g+1}}} \cdot \frac{\partial m_{\ell,k}(P)}{\partial \overline{\lambda}_{g+1}}=\delta_{k,g+1}  \, {^t\overline{\lambda}_\ell}\cdot \frac{\partial (m_{\ell,g+1}(P))}{\partial \overline{\lambda}_{g+1}}=-P\]
    and 
    \[{\frac{^t\partial(h(\lambda_k,\lambda_\ell))}{\partial \overline{\lambda}_{g+1}}}\cdot \frac{\partial(m_{\ell,k}(P))}{\partial \lambda_{g+1}}=\delta_{g+1,\ell}\, \lambda_k\cdot \frac{\partial (m_{g+1,k}(P))}{\partial \lambda_{g+1}}=-P~,\]
    \item If either $\ell=g+1$ or $k=g+1$ in the expressions in (4), then we get $0$, as $m_{g+1,g+1}(P)$ does not depend neither on $\lambda_{g+1}$, nor on $\overline{\lambda_{g+1}}$.
 \end{enumerate}

  Taking the previous relations into account, we get: 
  
       \begin{align*}
        \Delta_{g+1}\circ\Lambda( P)&= nP+\sum_{1\leq k,\ell\leq g}h(\lambda_k,\lambda_\ell)m_{\ell,k}(P_g)\\
        &+\sum_{1\leq \ell\leq g}\overline{\lambda}_\ell\cdot \frac{\partial}{\partial \overline{\lambda}_{g+1}}m_{\ell,g+1}(P)+\sum_{1\leq k\leq g}\lambda_k\cdot \frac{\partial}{\partial \lambda_{g+1}}m_{g+1,k}(P) \\
        &=(n-2g)P+\sum_{1\leq k,\ell\leq g}h(\lambda_k,\lambda_\ell)m_{\ell,k}(P_g)
   \end{align*}
 
Therefore,
\begin{align}\label{eq:one-side}
     \Delta_{g+1}\circ\Lambda(P)=(n-2g)P+\sum_{1\leq \ell,k\leq g}h(\lambda_i,\lambda_j)m_{i,j}(P_g)
\end{align}
   Finally, by subtracting \cref{eq:one-side} from \cref{eq:second-side}, we get
   \[[\Lambda,\Delta]P=(2g-n)P =H(P)~,\]
which proves the first relation. The last two relations are straightforward and we leave them to the reader.
\end{proof}

Let $\cF_{n,g}^{\mathrm{prim}}=\mathrm{Ker}(\Delta_g:\cF_{n,g}\rightarrow \cF_{n,g-1})$. By virtue of \Cref{l:laplacian}, every polynomial in $\cF_{n,g}^{\mathrm{prim}}$ is harmonic with respect to all the variables, i.e., pluriharmonic. By \Cref{proposition-sl2}, we obtain a decomposition:
\[\cF_{n,g}=\bigoplus_{\ell\leq g}\Lambda^\ell\cF_{n,\ell}^{\mathrm{prim}}~.\]
\begin{remarque}\label{r:kashiwara-vergne}
By a result of Kashiwara and Vergne \cite{ kashiwara_segal-shale-weil_1978}, the space of $\cF_{n,g}^{\mathrm{prim}}$ is generated by polynomials $P(U)=\det(AU)\det(B\overline{U})$ where $A,B$ are $g\times n$ complex matrices that satisfy $AA^t=BB^t=0$. In other words, the columns of $A^t,B^t$ generate totally isotropic subspaces of $\C^n$ with respect to the quadratic form $Q(x)=\sum_1^nx_i^2$. In particular, if $g>\frac{n}{2}$, then $\cF_{n,g}^{\mathrm{prim}}=0$. 
\end{remarque}

\subsection{Projectors for $\mathfrak{sl}_2$ representations}

By the results of the previous section, $\cF_{n,\bullet}$ admits an action of the complex Lie algebra $\mathfrak{sl}_2$, with weights lying in the range $[-n,n]$. 

In order to produce a correction to the generating series of compactified special cycles that is modular, we will use projectors onto the isotypic components of this $\mathfrak{sl}_2$ representation. This is normally done using Casimir elements, but for our application, we need only project vectors that are already in a fixed eigenspace for $H$, with eigenvalue $2g-n$, determined by the codimension of the special cycles. This simplifies the projector formulas that we recall in the general setting.

Let $(W,\rho)$ be a finite-dimensional complex representation of $\mathfrak{sl}_2$. Using complete reducibility, we have a canonical isotypic decomposition
$$W \simeq \bigoplus_{k\geq 0} (\pi_k \otimes U_k),$$
where $\pi_k \simeq \mathrm{Sym}^k(V_{std})$ is the unique irreducible representation of dimension $k+1$. Here, $V_{std}$ is the defining representation of $\mathfrak{sl}_2$, and $U_k$ is a trivial representation that encodes the multiplicity with which $\pi_k$ occurs in $W$. The eigenspaces of $H$ acting on $\pi_k$ are each one-dimensional, with integral eigenvalues called {\it weights} in $[-i,i]$. Any two weights for $\pi_k$ differ by an even integer.

\begin{lemma}\label{lemma:projector}
Let $m,k\geq 0$ be integers. There exists an element $\Pi_{m,k}$ in the universal enveloping algebra of $\mathfrak{sl}_2$ such that for any $\mathfrak{sl}_2$-representation $(W,\rho)$ as above, and any vector $v\in W$ such that $\rho(H)\cdot v=-mv$,
$$\rho(\Pi_{m,k})\cdot v \in \pi_k\otimes U_k.$$
Furthermore, $\rho(\Pi_{m,k})$ is an isotypic projector when restricted to $(-m)$-eigenspaces.
\end{lemma}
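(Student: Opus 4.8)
The plan is to separate the isotypic components of the weight space $W_{-m}:=\{v\in W:\rho(H)v=-mv\}$ using the Casimir element, and then to reduce the resulting interpolation problem to a \emph{finite} one by exploiting both the fixed weight $-m$ and the bound on the weights occurring in $W$. First I would recall that the Casimir $\Omega=\rho\big(EF+FE+\tfrac12 H^2\big)\in U(\mathfrak{sl}_2)$ is central, so it preserves the isotypic decomposition $W\simeq\bigoplus_{j\ge 0}(\pi_j\otimes U_j)$ and acts on $\pi_j\otimes U_j$ by the scalar $c_j=\tfrac12 j(j+2)$. Since $j\mapsto c_j$ is strictly increasing for $j\ge 0$, these scalars are pairwise distinct, so $\Omega$ alone distinguishes the isotypic types.

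Next I would record the effect of fixing the weight. The weight $-m$ occurs in $\pi_j$ precisely when $j\ge m$ and $j\equiv m\pmod 2$, so $W_{-m}$ meets only those components $\pi_j\otimes U_j$ with $j$ in $S=\{j: j\ge m,\ j\equiv m\ (2)\}$. Moreover all weights of $W$ lie in a bounded range (in our setting $[-n,n]$, so $j\le n$), so only finitely many $j\in S$ contribute; write $S_N=\{j\in S: j\le N\}$, where $N$ bounds the highest weights of $W$. This finiteness is the crux: the weight-zero subalgebra of $U(\mathfrak{sl}_2)$ is $\C[\Omega,H]$, and no polynomial can interpolate $\delta_{jk}$ over infinitely many distinct Casimir values, so the fixed weight together with the weight bound is exactly what makes a single element possible.

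I would then set, for $k\in S_N$,
\[
\Pi_{m,k}\;=\;\prod_{\substack{j\in S_N\\ j\neq k}}\frac{\Omega-c_j}{\,c_k-c_j\,}\;\in\;U(\mathfrak{sl}_2),
\]
and declare $\Pi_{m,k}=0$ if $k\notin S_N$ (for instance if $k<m$ or $k\not\equiv m\pmod 2$, in which case $(\pi_k\otimes U_k)\cap W_{-m}=0$). Being central, $\rho(\Pi_{m,k})$ preserves each isotypic component, and by Lagrange interpolation it acts on $\pi_j\otimes U_j$ by $\prod_{j'\neq k}\frac{c_j-c_{j'}}{c_k-c_{j'}}=\delta_{jk}$ for $j\in S_N$. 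Hence $\rho(\Pi_{m,k})$ kills $(\pi_j\otimes U_j)\cap W_{-m}$ for every $j\neq k$ and is the identity on $(\pi_k\otimes U_k)\cap W_{-m}$; in particular $\rho(\Pi_{m,k})v\in \pi_k\otimes U_k$ for every $v\in W_{-m}$, and $\rho(\Pi_{m,k})$ restricts on $W_{-m}$ to the (idempotent) isotypic projector onto $\pi_k\otimes U_k$. It is also worth recording the promised simplification: on $W_{-m}$ one has $\Omega=\rho\big(2FE+H+\tfrac12 H^2\big)=2\,\rho(FE)-m+\tfrac12 m^2$, so $\Omega$ and $\rho(FE)$ differ by the scalar $\tfrac12 m^2-m$, and $\Pi_{m,k}$ may equally be written as an explicit polynomial in $\rho(FE)$.

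The main obstacle is the one flagged above, namely producing a \emph{single} element valid for all admissible $W$ rather than a representation-dependent Casimir polynomial. This is resolved precisely by the fixed $H$-eigenvalue $-m$ together with the uniform weight bound $N$: these force the relevant Casimir spectrum $\{c_j:j\in S_N\}$ to be finite, so the same polynomial $\Pi_{m,k}$ acts correctly on $W_{-m}$ for every representation with weights in $[-N,N]$, in particular for $\cF_{n,\bullet}$ with $N=n$. The remaining verifications (centrality of $\Omega$, the eigenvalue $c_j$, and the Lagrange identity) are routine.
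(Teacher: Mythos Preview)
Your proof is correct and takes a genuinely different route from the paper. The paper works directly with the operators $E^jF^j$: it computes their action $E^iF^i\cdot v_{m+2j}=\frac{j!}{(j-i)!}\frac{(m+j+i)!}{(m+j)!}\,v_{m+2j}$ on each isotypic piece of $W_{-m}$, assembles these scalars into an upper-triangular matrix $M$ with $M_{ij}=\frac{j!}{(j-i)!}\frac{(m+i+j)!}{(m+j)!}$, and inverts that matrix explicitly to express each $\Pi_{m,k}$ as a finite linear combination of the $E^jF^j$ (for instance $\Pi_{m,m}=\sum_{j=0}^g(-1)^j\frac{(m+1)!}{j!(m+j+1)!}E^jF^j$). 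You instead invoke the Casimir $\Omega$ and Lagrange-interpolate at its pairwise distinct eigenvalues $c_j=\tfrac12 j(j+2)$. Both arguments need the same finiteness input: your weight bound $N$ plays exactly the role of the paper's implicit bound $m+2g$ on the highest weight, and both suffice for the intended application to $\cF_{n,\bullet}$. Your argument is shorter and more conceptual; the paper's buys explicit closed-form coefficients in the $E^jF^j$, which is convenient downstream since under the identification with the Lefschetz $\mathfrak{sl}_2$ action these operators have a transparent geometric meaning. Your closing remark that on $W_{-m}$ one has $\Omega=2\rho(FE)+\tfrac12 m^2-m$ makes the bridge between the two approaches explicit.
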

\begin{proof}
    For a $(-m)$-eigenvector $v\in W$ as above, its isotypic decomposition only involves nonzero summands in $\pi_k\otimes U_k$ for $k\geq m$ and $k\equiv m$ (mod 2), so we write:
    $$v = \sum_{j=0}^g v_{m+2j},$$
    where $v_k \in \pi_k\otimes U_k$ has weight $-m$. The first summand $v_m$ will coincide with the {\it primitive part} of $v$. By a computation in the irreducible representation $\pi_k$, we have
    \begin{align*}
EF\cdot v_{m+2j} &= j(m+j+1)v_{m+2j};\\
E^iF^i\cdot v_{m+2j} &= \frac{j!}{(j-i)!}\frac{(m+j+i)!}{(m+j)!} v_{m+2j}.
    \end{align*}
Here, we follow the gamma function convention $n!=\infty$ for negative integers $n$.
These relations can be arranged into an upper triangular matrix $M$:
$$M \begin{pmatrix} v_m \\ v_{m+2} \\ v_{m+4} \\ \vdots \\ v_{m+2g} \end{pmatrix} = \begin{pmatrix} v \\ EF\cdot v \\ E^2F^2 \cdot v \\ \vdots \\ E^gF^g\cdot v\end{pmatrix}.$$
where $M_{ij} = \frac{j!}{(j-i)!}\frac{(m+i+j)!}{(m+j)!}$ for $0\leq i,j\leq g$. The inverse matrix will allow us to write the projector $\Pi_{m,k}$ for each $k=m+2j$; otherwise, the projector is 0.
$$\begin{pmatrix} \Pi_{m,m} \\ \Pi_{m,m+2} \\ \Pi_{m,m+4}  \\ \vdots \\ \Pi_{m,m+2g}\end{pmatrix} = M^{-1} \begin{pmatrix} I \\ EF \\ E^2F^2  \\ \vdots \\ E^gF^g\end{pmatrix}.$$
One can verify that the entries of the inverse matrix are given by
$$M^{-1}_{ij} = (-1)^{i+j} \frac{(m+i)!(m+2i+1)}{i!(j-i)!(m+i+j+1)!}.$$
In particular, the primitive isotypic projector is given by
$$\Pi_{m,m} = \sum_{j=0}^g (-1)^j \frac{(m+1)!}{j!(m+j+1)!} E^j F^j.$$
\end{proof}

\subsection{Theta series and Hermitian (quasi)-modular forms}\label{s:quasi-modular}

We now introduce the notion of Hermitian quasi-modularity and a useful criterion in the case of weighted theta functions. We keep all notation from the previous section. 
Let $k$ be an imaginary quadratic field, and let $(M,h)$ be an $\mathcal{O}_k$-module with a Hermitian form of signature $(n,0)$ and the associated Weil representation $\rho_{M,g}$ for $g\geq 1$. 

We have an identification $(M_\R,h)\simeq (\C^n,h_{st})$ where $h_{st}$ is the standard Hermitian form on $\C^n$. This identification is obtained as follows: let $H$ be the matrix of the Hermitian pairing $h$ in a $k$-basis of $M$ and let $\sqrt{H}$ be the unique Hermitian positive definite matrix such that $\sqrt{H}^2=H$: then the above identification is given by $\lambda\mapsto \sqrt{^t H}^{-1}\lambda$~. Therefore, the differential operators considered in \Cref{homog} are written in the coordinates of $(M_\R,h)$ as follows: 
\[\Delta_i= \prescript{t}{}{\left(\frac{\partial}{\partial \lambda_i}\right)}\cdot H^{-1}\cdot \frac{\partial}{\partial \overline{\lambda_i}}~,\]
 for $\lambda_i\in M_\R$, and similarly for the directional derivative, which becomes $^t\lambda\cdot H^{-1}\cdot\frac{\partial }{\partial \mu}$, for $\lambda,\mu\in M_\R$ written with respect to a basis of $M$. 
 
 The following theorem is the Hermitian analogue of a theorem of Roehrig \cite{roehrig} in the orthogonal case. A detailed proof will appear in the upcoming work of Ben Howard.

\begin{theorem}\label{th:shimura}
    Let $P\in \cF_{n,g}$ and let  $\Delta=\sum_{i=1}^g{\Delta_i}$. Then the generating series: 
    \[\vartheta_P(\tau)=\det(Y)^{-1}\sum_{\underline{\lambda}\in (M^\vee)^g }\exp \left(-\frac{\Delta}{4\pi}\right)\left(P\right)(\underline{\lambda}\cdot Y^{\frac{1}{2}})q^{h(\underline{\lambda})}\mathfrak e_{\underline \lambda}\in \C[(M^\vee/M)^g]\llbracket q \rrbracket~,\]
    transforms like a Hermitian modular form of weight $2+n$ in the variable $\tau=X+iY\in \mathcal{H}_g$, with respect to the Weil representation $\rho_{M,g}$ of  $\rmU(g,g)(\Z)$~. 
\end{theorem}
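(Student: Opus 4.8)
The plan is to verify the automorphy directly on the three generators $m(A)$, $n(B)$, $w_g$ of $\mathrm{U}(g,g)(\Z)$ listed in \Cref{homog}, and then to invoke Shimura's criterion \cite[Appendix 7]{shimura-modularity} for the analytic heart of the argument. First I would use the isometry $(L_\R,h)\simeq(\C^n,h_{st})$ recorded just before the statement to replace $L^\vee$ by a lattice in $(\C^n)^g$ on which $\Delta=\sum_i\Delta_i$ is the standard flat Laplacian. Since $\Delta$ strictly lowers polynomial degree, $\exp(-\Delta/4\pi)$ is a \emph{finite} sum when applied to any $P\in\cF_{n,g}$, so every manipulation below is carried out with honest Schwartz functions. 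It is worth stressing at the outset that $\vartheta_P$ is in general only real-analytic in $\tau$: the corrections $\det(Y)^{-1}$, the insertion of $Y^{1/2}$, and the heat factor $\exp(-\Delta/4\pi)$ are all non-holomorphic, so ``transforms like a Hermitian modular form'' refers to the automorphy factor and not to holomorphy. When $P$ is $\Delta$-harmonic, i.e.\ primitive for the $\mathfrak{sl}_2$-action of \Cref{proposition-sl2}, one has $\exp(-\Delta/4\pi)P=P$ and $\vartheta_P$ is holomorphic; the non-harmonic components are precisely what will produce quasi-modularity in \Cref{s:quasi-modular}.

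The two parabolic generators are routine. For $n(B)$ with $B\in\mathrm{Herm}_g(\cO_k)$ the variable changes by $\tau\mapsto\tau+B$ while $Y$ is unchanged, so only $q^{h(\underline\lambda)}$ is affected, acquiring the factor $e(\mathrm{tr}(h(\underline\lambda)B))$; as this depends only on the class $\underline\nu=[\underline\lambda]\in(L^\vee/L)^g$, it reproduces $\rho_{L,g}(n(B))$ exactly. For $m(A)$ with $A\in\mathrm{GL}_g(\cO_k)$ one has $\tau\mapsto A\tau A^{*}$, hence $Y\mapsto AYA^{*}$ and $\det(Y)^{-1}\mapsto|\det A|^{-2}\det(Y)^{-1}$; after a reindexing of the lattice sum compatible with $\underline\nu\mapsto\underline\nu A^{-1}$ on classes, the homogeneity $P(UA)=|\det A|^2P(U)$ from the definition of $\cF_{n,g}$, together with the fact that $\exp(-\Delta/4\pi)$ commutes with this substitution, combines with the automorphy factor to reproduce $\det(C\tau+D)^{n+2}\,\rho_{L,g}(m(A))$. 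The bookkeeping here is also what pins down the weight: the rank of $L$ contributes $n$, and the $|\det A|^2$-homogeneity of $P$ contributes the remaining $2$, giving weight $n+2$.

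The substantive step, and the main obstacle, is the inversion $w_g$, i.e.\ $\tau\mapsto-\tau^{-1}$, which I would establish by Poisson summation over $(L^\vee)^g\subset(L_\R)^g$, whose dual (for the trace form) is $L^g$ and whose covolume produces the normalization $|L^\vee/L|^{-g/2}$. The entire analytic content is the claim that the Schwartz function $\underline x\mapsto\exp(-\Delta/4\pi)(P)(\underline x\,Y^{1/2})\,e^{2\pi i\,\mathrm{tr}(\tau\,h(\underline x))}$ is, up to the expected power of $\det\tau$ and an eighth root of unity, reproduced by its own Fourier transform with $\tau$ replaced by $-\tau^{-1}$. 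This rests on the classical Fourier-eigenfunction property of a Gaussian times a polynomial: the heat operator $\exp(-\Delta/4\pi)$ is exactly the normalization that intertwines multiplication by the Gaussian with the Fourier transform, so that $\exp(-\Delta/4\pi)(P)$ behaves like a harmonic weight even when $P$ is not harmonic. Rather than redo this computation I would check that $P$ — a polynomial on $\mathrm M_{n\times g}(\C)$ carrying the $\mathrm{GL}_g$-equivariance of \Cref{eq:bi-invariance} — meets the hypotheses of Shimura's criterion \cite[Appendix 7]{shimura-modularity} and quote its conclusion. The remaining labor is conventional rather than conceptual: matching the constant $(4\pi)^{-1}$, the factor $\det(Y)^{-1}$ and the half-power $Y^{1/2}$, and reconciling the Gauss sum with the Weil index $\gamma_{L,f}$ and the factor $|L^\vee/L|^{g/2}$ appearing in $\rho_{L,g}(w_g)$, so that the automorphy factor comes out to be exactly $\det(C\tau+D)^{n+2}$. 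Once the three generators are verified, the transformation law extends to all of $\mathrm{U}(g,g)(\Z)$, completing the proof.
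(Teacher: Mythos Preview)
The paper does not prove this theorem at all: it is stated as a quotation of Shimura's modularity criterion \cite[Appendix 7]{shimura-modularity}, with no argument given beyond the citation. Your proposal is therefore strictly more detailed than the paper itself, and is consistent with it: you reduce to the same reference for the analytic core (the $w_g$ inversion via Poisson summation and the Fourier-eigenfunction property of the heat-corrected polynomial times a Gaussian), while adding an explicit verification on the parabolic generators $n(B)$ and $m(A)$ that the paper omits entirely. Nothing in your sketch conflicts with the paper's approach, and the weight bookkeeping you describe (rank $n$ from the Gaussian plus $2$ from the $|\det A|^2$-homogeneity of $P\in\cF_{n,g}$) is exactly the reason the weight is $n+2$.
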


Notice that the holomorphic part of the series $\vartheta_P$ is equal to: 
\[\vartheta_P^+=\sum_{\underline{\lambda}\in (M^\vee)^g }P(\underline{\lambda})q^{Q(\underline{\lambda})}\mathfrak e_{\underline{\lambda}}~.\]

Therefore, the theorem provides natural non-holomorphic completions that transform like Hermitian modular forms. We make the following more general definition. 

\begin{definition}\label{def:quasi-hermitian}
    Let $f:\cH_g\rightarrow \C[(M^\vee/M)^g]$ be a holomorphic function. We say that $f$ is a Hermitian holomorphic quasi-modular form of weight $k$ with respect to $\rho_{M,g}$ if there exist holomorphic functions $g_1,\ldots,g_r$ and polynomials $P_j$ in $\det(Y)^{-1}$ and the entries of $Y^{\frac{1}{2}}$, where $Y=\frac{\tau^*-\tau}{2i}$, such that 
    \[\widehat{f}=f+\sum_{j=1}^rg_j P_j\] 
    transforms like a modular form of weight $k$ with respect to $\rho_{M,g}$ 
\end{definition}

\begin{example}
    The holomorphic function $\vartheta_P^+$ is an example of a Hermitian quasi-modular form. The span of all such functions will be denoted 
    $$\QHerm(2+n,\rho_{M,g}).$$
\end{example}


\subsection{Corrections of theta series} The previous section gives a general method for constructing Hermitian quasi-modular forms as weighted theta series, using a homogeneous weighting function $P\in \cF_{n,g}$. These theta series are holomorphic Hermitian modular forms if and only if $P$ is harmonic. 

In applications to special cycle completions, we will be presented with the following situation: we have a linear map of finite dimensional complex $\mathfrak{sl}_2$-representations:
$$u:V\to \cF_{n,\bullet}.$$
Equivalently, we have an element $u\in V^* \otimes \cF_{n,\bullet}$ such that $\mathfrak{sl}_2\cdot u=0$. This implies that for any $X\in \mathfrak{sl}_2$, 
$$0=X\cdot (\psi\otimes f) = -(\psi\circ X)\otimes f + \psi\otimes (X\cdot f)~.$$

We adopt the following notation: for a finite dimensional complex representation $W$ of $\mathfrak{sl}_2$, let $W_{\pi_k}=\pi_k\otimes U_k$ denote the $\pi_k$ isotypical component in $W$, where $\pi_k$ is the standard irreducible representation of highest weight $k$ and dimension $k+1$. Let also $W_{\pi_k,m}$ denote the weight $m$-eigenspace  of $W_{\pi_k}$. 

Let $2g\leq n$, and let $m=n-2g$. Since $u$ is a morphism of $\mathfrak{sl}_2$-representations, we get a morphism \[u_g:V_{-m}\rightarrow \cF_{n,g}~,\] which also corresponds to an element $u_g\in (V^*)_{m}\otimes \cF_{n,g}$.

Notice that \[\left[\Pi_{m,k}^*\otimes 1\right](u_g)=\left[1\otimes \Pi_{m,k}\right](u_g)\in (V^*)_{\pi_k,m}\otimes (\cF_{n,g})_{\pi_k}~,\]

and \[u_g=\sum_{k\geq m}[\Pi^*_{m,k}\otimes 1](u_g)=\sum_{k\geq m}[1\otimes \Pi_{m,k}](u_g)~,\] 
which allows to get an explicit decomposition of $u_g$ along either Lefschetz decompositions of $V^*$ or $\cF_{n,g}$.

Consider the generating series valued in $V^*$:
\[\Phi_{g}=\sum_{\underline{\lambda}\in (L^\vee)^g}u_g(\lambda_1,\ldots,\lambda_g)q^{h(\underline \lambda)}{[\underline{\lambda}]}\in (V^{*})_m\otimes\C[(L^{\vee}/L)^g]\llbracket q \rrbracket ~,\]
which, by \Cref{th:shimura}, transforms like a Hermitian quasi-modular form. Write the Lefschetz decompositions: 
\[V^{*}_{m}=\bigoplus_{k=m+2r} (E^*)^rV^{*}_{\pi_k,k}\quad \textrm{and}\quad  \cF_{n,g}=\bigoplus _{0\leq r\leq g}\Lambda^r\cF_{n,g-r}^{\mathrm{prim}}~.\] 
For $k=m+2r$, we then have $ \left[\Pi_{m,k}^*\otimes 1\right](u_g)\in  (E^*)^rV^{*}_{\pi_k,k}\otimes  \Lambda^r\cF_{n,g-r}^{\mathrm{prim}}$.
\medskip 

Let $\vartheta: \cF_{n,g}\rightarrow \QHerm(2+n,\rho_{L,g})$ be the theta lift map defined in the previous section. Then
\[\left[\Pi_{m,k}^*\otimes \vartheta\right](u_g)\in  (E^*)^rV^{*}_{\pi_k,k}\otimes  \QHerm(2+n,\rho_{L,g})\]
The previous discussion proves the following theorem.

\begin{theorem}\label{theorem:correction-general}
The series \[[\Pi_{m,m}^*\otimes \vartheta](u_g)=\Phi_g-\sum_{{k> m}} [\Pi_{m,k}^*\otimes \vartheta](u_g)\]
is a holomorphic Hermitian modular form of weight $2+n$ with respect to the Weil representation $\rho_{L,g}$ of $\mathrm{U}(g,g)(\Z)$.
\end{theorem}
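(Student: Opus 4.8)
The plan is to exploit the two complementary ways of reading the $\mathfrak{sl}_2$-invariant element $u_g$: once through the Lefschetz decomposition of $\cF_{n,g}$, where the primitive part consists of harmonic polynomials, and once through the theta lift $\vartheta$, which by \Cref{th:shimura} is modular on all of $\cF_{n,g}$ but holomorphic precisely on the harmonic (primitive) weightings. The strategy is therefore to theta-lift the isotypic decomposition of $u_g$ and to retain only the primitive summand, discarding the non-primitive pieces as the explicit corrections.

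First I would record the decomposition $u_g=\sum_{k\geq m}[1\otimes\Pi_{m,k}](u_g)$ furnished by \Cref{lemma:projector}, together with the intertwining identity $[\Pi^*_{m,k}\otimes 1](u_g)=[1\otimes\Pi_{m,k}](u_g)$, which is exactly the statement that $u$ is a morphism of $\mathfrak{sl}_2$-representations. The crucial identification is that the $k=m$ term carries harmonic weighting: since every element of $\cF_{n,g}$ is an $H$-eigenvector of eigenvalue $2g-n=-m$ by \Cref{proposition-sl2}, \Cref{lemma:projector} shows that $\Pi_{m,m}$ projects onto the $\pi_m$-isotypic component, whose weight-$(-m)$ vectors are the lowest-weight vectors $\ker(\Delta_g)=\cF_{n,g}^{\mathrm{prim}}$. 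Hence $[1\otimes\Pi_{m,m}](u_g)\in (V^*)_m\otimes\cF_{n,g}^{\mathrm{prim}}$.

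Next I would apply the theta lift. Expanding $u_g$ in a basis of $(V^*)_m$ writes $[1\otimes\vartheta](u_g)$ as a $V^*$-valued sum of completed theta series, each of which is a Hermitian modular form of weight $n+2$ and representation $\rho_{L,g}$ by \Cref{th:shimura}; the projector $\Pi^*_{m,k}$ acts only on the inert $V^*$-factor and so preserves this modularity. For $k=m$ the weighting is harmonic, so the completion is trivial, $\vartheta=\vartheta^+$ on this summand, and $[\Pi^*_{m,m}\otimes\vartheta](u_g)$ is simultaneously holomorphic and modular. It then remains to rearrange the decomposition of the modular completion $[1\otimes\vartheta](u_g)=\sum_{k\geq m}[\Pi^*_{m,k}\otimes\vartheta](u_g)$ of $\Phi_g$ so as to isolate this primitive summand as $\Phi_g$ minus the higher $k>m$ corrections, which gives the displayed identity.

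The main point requiring care is the bookkeeping between the holomorphic theta series $\vartheta^+$ that assembles the quasi-modular $\Phi_g$ and the completed lift $\vartheta$ that is genuinely modular: one must verify that the non-holomorphic contributions of all the $k>m$ corrections are precisely what the subtraction removes, so that the difference is holomorphic. This becomes automatic once everything is written in terms of completions and one uses that $\vartheta=\vartheta^+$ on the $k=m$ piece. The remaining subtlety, matching the abstract $\pi_m$-isotypic projector of \Cref{lemma:projector} with concrete harmonicity $\ker(\Delta_g)$, is likewise not a genuine obstacle, since it follows from $\cF_{n,g}$ being a pure $H$-eigenspace together with the $\mathfrak{sl}_2$-triple of \Cref{proposition-sl2}; the real work has already been carried out in establishing Shimura's criterion and the $\mathfrak{sl}_2$-structure on $\cF_{n,\bullet}$.
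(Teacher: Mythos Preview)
Your proposal is correct and follows essentially the same route as the paper: decompose $u_g$ via the isotypic projectors of \Cref{lemma:projector}, use the $\mathfrak{sl}_2$-equivariance to identify the $k=m$ summand with the harmonic piece $\cF_{n,g}^{\mathrm{prim}}$, and invoke \Cref{th:shimura} to conclude that the theta lift of a harmonic weighting is already a holomorphic modular form. One notational point: in the paper $\vartheta$ denotes the \emph{holomorphic} theta lift with target $\QHerm(2+n,\rho_{L,g})$, so $\Phi_g=[1\otimes\vartheta](u_g)$ directly, and the displayed identity is simply the rearrangement of $\sum_{k\geq m}[\Pi^*_{m,k}\otimes\vartheta](u_g)=\Phi_g$; the bookkeeping between $\vartheta$ and $\vartheta^+$ that you flag in your last paragraph is therefore not needed.
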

Indeed,  $[\Pi_{m,m}^*\otimes \vartheta]   (u_g)\in V^{*}_{\pi_m,m}\otimes \cF_{n,g}^{\mathrm{prim}}$ therefore applying the theta lifts produces holomorphic modular forms of weight $2+n$ with respect to the Weil representation $\rho_{L,g}$ of $\mathrm{U}(g,g)(\Z)$.
\begin{remarque}
 Observe that for $k> m$, $[\Pi_{m,k}^*\otimes \vartheta](u_g)$ is in the image of $E^*\otimes 1$ in $V^*\otimes \QHerm(2+n,\rho_{L,g})$,  which will be useful for corrections by the boundary components later, as $E^*$ will be the cup product with (a multiple of) the Chern class of the conormal bundle to the boundary. 
\end{remarque}



\section{Unitary Shimura varieties and toroidal compactifications}\label{s:shimura-varieties}

\subsection{Unitary Shimura varieties}
For background on unitary Shimura varieties, we refer to \cite[Section 3.3]{Kudla-rapoport-SpeCyII}, which we will follow closely.

\medskip

Let $k$ be a quadratic imaginary field with ring of integers $\mathcal{O}_k$ and let $(V,h)$ be a Hermitian vector space over $k$ of signature $(n+1,1)$. The similitude group of $(V,h)$, denoted $\bfG=\mathrm{GU}(V)$, is the reductive algebraic group over $\Q$ whose points over any $\Q$-algebra $R$ are 
\[\bfG(R)=\{g\in\mathrm{End}_k(V)\otimes_\Q R\,|\,gg^*=\mu(g)\in R^\times\}~,\] where $*$ is the involution of $\mathrm{End}_k(V)$ determined by the Hermitian pairing $h$. The character $\mu$ determines a morphism of algebraic groups over $\Q$, $\bfG\rightarrow \mathbb{G}_m$ and we let $\bfG_1=\mathrm{U}(V)$ denote the kernel of $\mu$. 
\medskip 

Let $V_\R=V\otimes_\Q \R$. Then $V_\R$ is a $k\otimes _\Q\R$-module and the choice of an embedding 
$\tau:k\hookrightarrow\C$ specifies a complex structure $J_0$ on $V_\R$ which makes $(V_\R,J_0,h)$ into a Hermitian vector space over $\C$. For each $P\subset V_\R$ a $J_0$-stable subspace on which $h$ is negative definite, the orthogonal complement $P^\bot$ is $J_0$-stable and $h$ is positive definite on $P^{\bot}$. Such a $P$ must be a real 2-plane for signature reasons. We thus consider $\bD(V)$ the set of all $J_0$-stable subspaces $P\subset V_\R$ on which $h$ is negative definite. The Lie group $G_1=\bfG_1(\R)$ acts transitively on $\bD(V)$ and the stabilizer of a point is isomorphic to $\mathrm{U}(n+1)\times \mathrm{U}(1)$. Hence, we have an isomorphism 
\[\bD(V)\simeq \mathrm{U}(n+1,1)/\mathrm{U}(n+1)\times \mathrm{U}(1)~,\]
which gives $\bD(V)$ the structure of a Hermitian symmetric domain.

Let $L\subset V$ be a $\mathcal{O}_k$-lattice, i.e., $L$ is a projective $\cO_k$-module with $L\otimes_{\mathcal{O}_k}k=V$ and such that the Hermitian form $h$ is $\mathcal{O}_k$-valued on $L$. Let $L^{\vee}\subset V$ denote the dual $\mathcal{O}_k$ lattice. 
The finite abelian group $L^{\vee}/L$ admits also a structure of an $\mathcal{O}_k$-module. Let $\Gamma_L\subset \bfG_1(\Q)$ be the group of unitary isometries that preserve  $L$ and act trivially on $L^\vee/L$. Then $\Gamma_L\subset G_1$ is an arithmetic subgroup and the quotient $X_{\Gamma_L}=\Gamma_L\backslash \mathbb D(V)$ is a complex orbifold: it is a complex unitary Shimura variety of dimension $n+1$. 

\subsection{Special cycles and modularity}

Let $1\leq g\leq n+1$. For $g$ vectors $\underline{\lambda}=(\lambda_1,\ldots,\lambda_g)$, we denote their Gram matrix by $h(\underline{\lambda})$. Let $\mathbb D_{\underline{\lambda}}(V)\subset \mathbb D(V)$ be the closed complex analytic subspace defined by the following conditions on $P\in \mathbb D(V)$: $\lambda_i\in P^{\bot}$ for all $1\leq i\leq g$. Since $P^{\bot}$ is a positive definite Hermitian lattice, $\mathbb D_{\underline{\lambda}}(V)$ is empty unless $h(\underline{\lambda})$ is a positive semidefinite Hermitian matrix. In that case, $\mathbb D_{\underline{\lambda}}(V)$ is a Hermitian symmetric subdomain of codimension equal to the rank of the matrix  $h(\underline \lambda)$.
Let $N\in \mathrm{Herm}_{g}(k)$ be a $g\times g$ Hermitian positive semidefinite matrix. We define the following cycle: 
\[\cZ^{\rm naive}(\underline{\nu},N)=\Gamma_L\big\backslash\left(\bigcup_{\underset{h(\underline \lambda)=N}{\underline{\lambda}\in \underline{\nu}+L^g}}\mathbb D(V)_{\underline{\lambda}}\right)~.\]
Then $\cZ^{\rm naive}(\underline{\nu},N)\hookrightarrow X_{\Gamma}$ is a closed algebraic subvariety of codimension $r(N)$, the rank of the matrix $N$. Let $\mathbb{E}$ be the tautological line bundle on $X_\Gamma$. To ensure that all the special cycles live in the same codimension, we define: \[\cZ(\underline{\nu},N)=\cZ^{\rm naive}(\underline{\nu},N)\cup c_1(\mathbb{E}^\vee)^{g-r(N)}~.\]

Define the generating series\footnote{It is an open problem whether this series is convergent with values in the Chow group.}: 
\begin{align}\label{generating-series}
  \Phi^g_L(\tau)=  \sum_{\underset{N\in\mathrm{Herm}_{g}(k)_{\geq 0}}{\underline{\nu}\in (L^\vee/L)^g}} [\cZ(\underline{\nu},N)]q^{N}\mathfrak e_{\underline\nu}\in \mathrm{CH}^g(X_\Gamma)\otimes\C[(L^\vee/L)^g] \llbracket q \rrbracket,
\end{align}
where $q=e^{2i\pi \Tr(N\tau))}$ and $\tau\in\mathbb{H}_g$ is an element of the Hermitian upper-half space of genus $g$. The following is the seminal theorem of Kudla and Millson \cite{kudla-millson}. 

\begin{theorem}
    The class in cohomology of the generating series (\ref{generating-series}) is a holomorphic Hermitian modular form of genus $g$, weight $2+n$ with respect to the Weil representation $\rho_L^g$ of $U(g,g)(\Z)$. 
\end{theorem}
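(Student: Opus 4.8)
The plan is to realize the cohomology class of the generating series $\Phi_L$ as the class of a theta kernel built from the Kudla--Millson Schwartz form, and to deduce modularity from the transformation properties of that kernel under the Weil representation, following the method of \cite{kudla-millson}.

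First I would recall the construction of the Kudla--Millson form for the dual pair $\big(\mathrm{U}(n+1,1),\mathrm{U}(g,g)\big)$. Using the Fock (or Schr\"odinger) model of the oscillator representation, one produces a $G_1$-invariant differential form
\[
\varphi^g_{KM}\in\big[\mathcal{S}(V_\R^g)\otimes\mathcal{A}^{(g,g)}(\mathbb{D}(V))\big]^{G_1},
\]
of Hodge type $(g,g)$ on $\mathbb{D}(V)$ with values in the Schwartz space of $V_\R^g$. Its two crucial features are: (i) it is closed, $d\varphi^g_{KM}=0$; and (ii) for every $\underline\lambda$ with positive definite Gram matrix $h(\underline\lambda)$ the form $\varphi^g_{KM}(\underline\lambda)$ is a Thom representative of the Poincar\'e dual of the special subdomain $\mathbb{D}_{\underline\lambda}(V)$. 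Moreover, under the maximal compact $\mathrm{U}(g)\times\mathrm{U}(g)\subset\mathrm{U}(g,g)(\R)$, the form $\varphi^g_{KM}$ transforms through the character whose determinant power is $n+2$, which is exactly the claimed weight (the signature $(n+1,1)$ gives $p+q=n+2$).

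Next I would assemble the theta kernel. Writing $\tau=X+iY\in\mathcal{H}_g$ and choosing $g_\tau\in\mathrm{U}(g,g)(\R)$ with $g_\tau\cdot i\mathrm I_g=\tau$, and letting $\omega$ denote the Weil representation of $\mathrm{U}(g,g)(\R)$ on $\mathcal{S}(V_\R^g)$, I set (up to a normalizing power of $\det Y$)
\[
\theta_L(\tau,z)=\sum_{\underline\nu\in(L^\vee/L)^g}\Big(\sum_{\underline\lambda\in\underline\nu+L^g}\big(\omega(g_\tau)\varphi^g_{KM}\big)(\underline\lambda)(z)\Big)\,\mathfrak{e}_{\underline\nu}.
\]
For each fixed $z$ this is a classical theta series attached to the Schwartz form, so Poisson summation shows that the lattice sum transforms under the generators $m(A),n(B),w_g$ of $\mathrm{U}(g,g)(\Z)$ precisely by $\rho_{L,g}$, while the $\mathrm{U}(g)\times\mathrm{U}(g)$-equivariance of $\varphi^g_{KM}$ supplies the automorphy factor $\det(C\tau+D)^{n+2}$. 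Since $\varphi^g_{KM}$ is rapidly decreasing in the Schwartz variable, the sum converges and descends to $X_\Gamma=\Gamma_L\backslash\mathbb{D}(V)$, so the closed $(g,g)$-form $\theta_L(\tau,\cdot)$ determines a class $[\theta_L(\tau)]\in H^{2g}(X_\Gamma)\otimes\C[(L^\vee/L)^g]$ transforming like a Hermitian modular form of weight $n+2$ and representation $\rho_{L,g}$, despite the non-compactness of $X_\Gamma$.

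The final and most delicate step is to match the Fourier expansion of $[\theta_L(\tau)]$ with $\Phi_L$. Grouping the lattice sum by the value $N=h(\underline\lambda)$ produces a $q$-expansion whose $(\underline\nu,N)$-coefficient is the class $\big[\sum_{\underline\lambda\in\underline\nu+L^g,\ h(\underline\lambda)=N}\varphi^g_{KM}(\underline\lambda)\big]$. For $N$ of full rank $g$, property (ii) identifies this with $[\cZ(\underline\nu,N)]$. I expect the main obstacle to be the degenerate terms $r(N)<g$: there the special subdomain has excess codimension and the naive Poincar\'e-dual identity fails, so one must show that the class of $\varphi^g_{KM}$ along the degenerate stratum equals $[\cZ(\underline\nu,N)]\cup c_1(\mathbb{E}^\vee)^{g-r(N)}$, i.e.\ that the Euler class of the excess normal bundle is realized through the tautological bundle $\mathbb{E}$. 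This is exactly the correction factor inserted in \eqref{generating-series}, and establishing it requires a careful analysis of the singular behavior of $\varphi^g_{KM}$ on the degenerate locus --- the heart of the Kudla--Millson computation that makes the generating series modular rather than merely quasi-modular.
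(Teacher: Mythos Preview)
The paper does not supply its own proof of this theorem; it simply records the statement and cites Kudla--Millson \cite{kudla-millson}. Your outline is precisely the strategy of that reference --- construct the Schwartz form $\varphi^g_{KM}$, form the theta kernel, read off the automorphy from the Weil representation and the $K$-type, and identify Fourier coefficients with special cycle classes --- so in that sense your proposal agrees with what the paper invokes.

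That said, there is one genuine omission in your sketch beyond the degenerate-rank issue you already flag. The theorem asserts that the cohomology-valued form is \emph{holomorphic} in $\tau$, not merely that it transforms correctly. Your argument establishes only the transformation law. Holomorphy is a separate step: one must show that applying the Maass lowering operator (equivalently $\bar\partial_\tau$) to $\theta_L(\tau,\cdot)$ yields an \emph{exact} differential form on $X_\Gamma$, so that its cohomology class vanishes. In the Kudla--Millson framework this is done by producing an explicit primitive $\psi$ with $L\varphi^g_{KM}=d\psi$ at the level of Schwartz forms. Without this, the class $[\theta_L(\tau)]$ is a priori only a non-holomorphic (real-analytic) modular form, and the conclusion as stated does not follow.
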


\subsection{Toroidal compactification}\label{boundary}
The statements from this section follow closely the reference \cite[Section 3.3]{howard-CMII}. We assume that $\Gamma$ is a neat arithmetic subgroup of $\bfG_1(\Q)$, and we refer to \Cref{remark-neat} for a discussion on how to reduce to this case. 
\medskip 

The Baily-Borel compactification of $X_\Gamma$ can be described as follows: for each primitive isotropic $\mathcal{O}_k$-line $\mathfrak{J}\subset L$, let $P_{\mathfrak{J}}$ be the corresponding point in the closure of $\mathbb D(V)$ in the space of $J_0$-stable planes of $V_\R$. Then the Baily-Borel compactification is:
\[X_\Gamma^{BB}=\Gamma_L\backslash\left(\mathbb D(V)\cup\bigcup_{\mathfrak{J}\subset L}\{P_\mathfrak J\}\right)~.\]
In particular, all the cusps are zero-dimensional. 
\medskip

We now describe the toroidal compactification of $X_\Gamma$. 
Let $\mathfrak{J}\subset L$ be a primitive isotropic $\mathcal{O}_k$-line. By \cite[pp. 671-672]{howard-CMII},  we can find a decomposition 
\begin{align}\label{decomposition}
    L=\mathfrak{J}\oplus \Lambda\oplus \mathfrak{c}~,
\end{align}
in such a way that $\mathfrak{c}$ is isotropic, $\mathfrak{J}^{\bot}=\mathfrak{J}\oplus \Lambda,$ and $\Lambda=(\mathfrak J\oplus \mathfrak c)^{\bot}$ is positive definite.

Let $\bfP\subset \bfG_1$ be the parabolic subgroup that stabilizes $\fJ_k$. Then $\bfP$ also stabilizes $\mathfrak J_k^{\bot}$, and the quotient $\mathfrak J^{\bot}/\mathfrak J$ is an $\mathcal{O}_k$-lattice with a Hermitian form of signature $(n,0)$. Let $N(\bfP)\subset \bfP$ be the unipotent radical. We have the short exact sequence
$$1 \to N(\bfP) \to \bfP  \to k^\times \times \mathrm{U}(\mathfrak J_k^\perp/\mathfrak J_k) \to 1.$$
Since $\Gamma$ is neat, this sequence collapses upon intersection with $\Gamma$\footnote{For general $\Gamma$, the cokernel will be a finite group.}:
$$1 \to \Gamma\cap N(\bfP ) \to \Gamma\cap \bfP \to 1.$$

The center $C$ of $N(\bfP)$ sits in the following exact sequence: 
\[1\rightarrow C\rightarrow N(\bfP)\rightarrow W\rightarrow 1~.\]
We will now explicitly describe the matrix groups above in a basis adapted to the decomposition \ref{decomposition}. We may choose basis elements $e,e_1,\ldots,e_n,e'$ of $V$ over $k$ such that $\fJ_k=ke$, $\mathfrak c_k=ke'$, and $e_1,\ldots,e_{n}$ is a basis of $\fJ_k^{\bot}/\fJ_k$. The matrix of the Hermitian form on $V$ has the following shape in this particular basis:
\[\begin{pmatrix}
    & &\delta_k\\
    &A&\\
    -\delta_k& &\\
\end{pmatrix},\]
for a diagonal matrix $A\in \mathrm{M}_n(\Q)$ with positive diagonal entries and $\delta_k=i\sqrt{d_k}$, where $-d_k$ is the discriminant of $k$. The points of $N(\bfP)$ are given by: 
\begin{align*}
N(\bfP)(\Q)=\left\{\begin{pmatrix}
    1& ^tT& X\\ 
    &\mathrm{I}_{n} & S\\
     & & 1\\
\end{pmatrix}\middle|\,\begin{array}{cc} T,S\in k^{n}, & \delta_kT=-A\overline S,\\
X\in k,& \delta_k(X-\overline X)+^tSA\overline S=0\end{array} \right\}~.\end{align*}

The center is the subgroup where $T=S=0$, hence $X=\overline X$. Thus $C(\R)\simeq \R$ and the cone decomposition is trivial in this situation.

The intersection $\Gamma\cap C(\Q)$ takes the following form: 

\[\Gamma\cap C(\Q)=\left\{\begin{pmatrix}
    1& 0& X\\ 
    &\mathrm{I}_{n} & 0\\
     & & 1\\
\end{pmatrix}|\, X\in r_\fJ\Z\right\}~,\]
for a unique $r_\fJ\in \Q^+$ that depends on the choice of $e\in\mathfrak{J}_k$. 
Explicitly, $\Gamma\cap C(\Q)$ is an infinite cyclic group generated by $\gamma_0$:
$$\gamma_0(x) = x + \frac{r_\fJ}{\delta_k} h(x,e)e.$$
We will assume that $e\in \mathfrak J$ so that $\cO_k\subset \mathfrak a_0$.
The algebraic group $W(\Q)$ is identified with $(\mathfrak J_k^\perp/\mathfrak J_k,+)$ and the intersection $\Gamma\cap W(\Q)$ is identified with $(\mathfrak J^\perp/\mathfrak J,+)$. The quotient $W(\R)/\Gamma\cap W$ is then isomorphic to $E^{n-1}\times E'$ where $E=\C/\mathcal{O}_k$ and $E'=\C/\mathfrak{a}$ for some fractional $\mathcal{O}_k$-ideal $\mathfrak{a}$. 

Let $X_\Gamma^{\tor}$ be the toroidal compactification determined by the unique, trivial cone decomposition above. For each cusp parametrized by a primitive isotropic $\mathcal{O}_k$-submodule $\mathfrak{J}\subset L$ of rank one, the corresponding boundary divisor $\cB_{\mathfrak J}$ can be described as a finite group quotient of the following abelian variety: let $M = \mathfrak J^\bot/\mathfrak J$ and let
\[E_M := E \otimes_{\mathcal{O}_k} M \simeq E^{n-1}\times E',  \]
where $E=\C/\mathcal{O}_k$ and $E'=\C/\mathfrak{a}$.

\begin{proposition}\label{propo:normal-is-ample}
    The normal bundle of each boundary component $\cB_{\mathfrak J}\subset X_\Gamma^\tor$ in the toroidal compactification pulls back to an anti-ample line bundle on $E_M$ whose class is a negative multiple of the polarization given by the positive definite quadratic form on $M$, viewed as a free $\Z$-module.
\end{proposition}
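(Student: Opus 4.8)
The plan is to compute the normal bundle directly in the explicit unbounded (Siegel-domain) model of $\bD(V)$ near the cusp $\fJ$, identify the pullback of $N_{\cB_\fJ/X_\Gamma^\tor}$ to $E_M$ with a line bundle given by an explicit factor of automorphy, and then read off its Chern class and sign via the Appell--Humbert theorem. All the input data are already in hand: the matrix description of $N(\bfP)$, the relations $\delta_k T=-A\overline S$ and $\delta_k(X-\overline X)+{}^tSA\overline S=0$, and the constant $r_\fJ$ generating $\Gamma\cap C$.

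First I would set up coordinates. Writing a point of $\bD(V)$ as the line spanned by $z=e'+\sum_i u_ie_i+we$ with $(u,w)\in\C^n\times\C$, a direct computation with the Gram matrix of \Cref{boundary} gives
\[ h(z,z)=q(u)-2\sqrt{d_k}\,\mathrm{Im}(w), \]
where $q(u)={}^tuA\overline u=h(u,u)$ is the positive definite form attached to $M$. Hence $\bD(V)$ is the Siegel domain $\{\mathrm{Im}(w)>q(u)/(2\sqrt{d_k})\}$, and the cusp corresponds to $\mathrm{Im}(w)\to+\infty$. Reading the action of $N(\bfP)$ off the matrices, the element with parameters $(T,S,X)$ sends $u\mapsto u+S$ and $w\mapsto w+{}^tTu+X$.

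Next I would take the two quotients that build the compactification. Quotienting by the center $\Gamma\cap C$, generated by $w\mapsto w+r_\fJ$, the function $t=e^{2\pi i w/r_\fJ}$ descends to a coordinate on the punctured-disk bundle, with $\mathrm{Im}(w)\to+\infty$ becoming $t\to0$; for the trivial cone decomposition the AMRT construction simply adjoins $t=0$, so near the cusp $X_\Gamma^\tor$ is the total space of a line bundle over the $u$-quotient with $\cB_\fJ$ as its zero section. Quotienting the $u$-space by $\Gamma\cap W\simeq M$ turns it into $E_M$ (the finite quotient relating $E_M$ to $\cB_\fJ$ is harmless, as we only pull back), and since the normal bundle of the zero section of a line bundle is the line bundle itself, the pullback of $N_{\cB_\fJ/X_\Gamma^\tor}$ to $E_M$ is the line bundle $\mathcal L$ whose fibre coordinate is $t$. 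Using the constraints, the lift $g_s$ of $s\in M$ acts by $w\mapsto w-\delta_k^{-1}h(u,s)+X_s$ with $\mathrm{Im}(X_s)=h(s,s)/(2\sqrt{d_k})$, whence the factor of automorphy of $t$ is
\[ j_s(u)=\exp\!\Big(\tfrac{2\pi i}{r_\fJ}\big(-\delta_k^{-1}h(u,s)+X_s\big)\Big)=\chi(s)\,\exp\!\Big(-\tfrac{2\pi}{r_\fJ\sqrt{d_k}}\,h(u,s)-\tfrac{\pi}{r_\fJ\sqrt{d_k}}\,h(s,s)\Big), \]
where $\chi(s)=e^{2\pi i\,\mathrm{Re}(X_s)/r_\fJ}$ is a unitary phase. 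This is exactly the canonical Appell--Humbert factor $\chi(s)\exp(\pi H(u,s)+\tfrac{\pi}{2}H(s,s))$ for the Hermitian form $H=-\tfrac{2}{r_\fJ\sqrt{d_k}}\,h$ on $M_\R$. Since $h$ is positive definite, $H<0$, so $\mathcal L=L(H,\chi)$ is anti-ample and $c_1(\mathcal L)=\mathrm{Im}(H)=-\tfrac{2}{r_\fJ\sqrt{d_k}}\,\mathrm{Im}(h)$ is a negative multiple of the polarization of $E_M$ induced by the positive definite form on $M$, which is the assertion.

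The step I expect to be the main obstacle is pinning down this factor of automorphy with the correct sign. Two points carry all the weight: identifying the AMRT local model so that $\cB_\fJ$ is genuinely the zero section of $\mathcal L$ rather than of $\mathcal L^\vee$, and correctly propagating the signs of $\delta_k^{-1}$ and of $\mathrm{Im}(w)$ at the cusp through the exponential. The consistency check that confirms the computation is the matching of the quadratic term $\tfrac{\pi}{2}H(s,s)$ with the imaginary part of $X_s$ forced by $\delta_k(X-\overline X)+{}^tSA\overline S=0$; this same relation shows that the would-be semicharacter $\chi$ is well defined, which is precisely what the neatness of $\Gamma$ guarantees.
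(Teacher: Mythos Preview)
Your approach is correct and arrives at the same conclusion, but the route is genuinely different from the paper's. The paper argues topologically: it identifies the punctured normal neighborhood as a circle bundle over $E_M$, recognizes that the Chern class of $\cN_{\cB_\fJ}$ equals the extension class of the central extension
\[
0\to\Z\to\Gamma\cap N(\bfP)\to M\to 0,
\]
and computes that class as a group $2$-cocycle by choosing an explicit set-theoretic section $\lambda\mapsto T_\lambda$ of the extension and measuring its failure to be a homomorphism. This produces the alternating form $\tfrac{\sqrt{d_k}}{r_\fJ}\Im h$ directly; the sign is then fixed a posteriori by observing that $\cB_\fJ$ contracts to the Baily--Borel cusp, forcing the normal bundle to be anti-ample.

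Your argument instead stays in the holomorphic category: you write down the Siegel-domain coordinates, exponentiate to get the fiber coordinate $t$, and compute the factor of automorphy of $t$ under the lattice $M$. Matching with the Appell--Humbert canonical form gives the Hermitian form $H=-\tfrac{2}{r_\fJ\sqrt{d_k}}h$, and since $H<0$ you get anti-ampleness and the Chern class in one stroke. The payoff is that the sign is intrinsic to the computation rather than supplied by an external geometric argument. The price is that the bookkeeping of signs and of which bundle ($\mathcal L$ versus $\mathcal L^\vee$) carries the fiber coordinate $t$ is delicate, exactly as you flag; your consistency check via the quadratic term in $X_s$ is the right way to confirm it. The overall constant you obtain differs from the paper's by a harmless factor coming from normalization conventions for $\Im h$ versus the alternating form, but the proposition only asserts a negative multiple, so this is immaterial.
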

\begin{proof}
    An analytic neighborhood of the cusp corresponding to $\mathfrak J\subset L$ is diffeomorphic to the quotient
    $$\mathbb D(V)/(\Gamma\cap \bfP).$$
    If we view $\mathbb D(V)$ as an open subset of the complex projective space $\P(V_\C)$, then linear projection from the point $[\mathfrak J_\C]$ gives a surjective morphism
    $$\mathbb D(V) \to \P(V/\mathfrak J_\C) \setminus \P(\mathfrak J^\bot/\mathfrak J_\C),$$
    where each fiber is isomorphic to the Poincar\'{e} upper half plane. The base is the affine space that is a torsor for $\Hom(M_\C, \overline{\mathfrak J}_\C)$. Accordingly, $\Gamma\cap N(\bfP)$ sits as a central extension
    \begin{equation}\label{centralext}0 \to \Z \to \Gamma\cap N(\bfP) \to M\to 0.\end{equation}
    The quotient $\mathbb D(V)/(\Gamma \cap N(\bfP))$ is an oriented punctured disk bundle over the complex torus
    $E_M$, which realizes the fibration sequence of classifying spaces 
    $$B\Z \to B(\Gamma \cap N(\bfP)) \to BM$$
    in terms of the analytic spaces
    $$\Delta^* \to \mathbb D(V)/(\Gamma \cap N(\bfP)) \to E_M.$$
    The toroidal boundary component $\cB_{\mathfrak J}$ is the filling of this punctured disk bundle, so the normal bundle to the boundary has Chern class $c_1(\cN_{\cB_{\mathfrak J}})\in \mathrm{H}^2(E_M,\Z)$ given by the central extension class of \ref{centralext}. We explicitly compute a 2-cocycle for this extension by choosing a set-theoretic section for the sequence \ref{centralext}. For any $\lambda\in \mathfrak J^\perp_k$, let
    $$T_\lambda(x) = x+ h(x,e)\lambda - h(x,\lambda) e - \frac{1}{2}h(\lambda,\lambda) h(x,e)e.$$
    One can check that $T_\lambda\in N(\bfP)$, and that it only depends on $\lambda$ via its equivalence class $[\lambda] \in \mathfrak J_k^\perp/\Q e$. Furthermore, for purely imaginary multiples of $e$ we have:
    $$T_{\Q\delta_k e} \in C(\Q).$$
    Choosing any right inverse for the quotient map $\mathfrak J_k^\perp/\Q e \to \mathfrak J_k^\perp/\mathfrak J_k$, the formula $T_\lambda$ then gives a set-theoretic section of \ref{centralext}.
    The failure of $[\lambda] \mapsto T_\lambda$ to be a group homomorphism is measured by the following 2-cocycle for $M$:
    \begin{align*}
        T_{-\lambda_1 - \lambda_2} \circ T_{\lambda_2}\circ T_{\lambda_1}(x) &= x  - \omega(\lambda_1,\lambda_2)h(x,e)e;\\
        \omega(\lambda_1,\lambda_2) &= h(\lambda_1,\lambda_2) - \Re h(\lambda_1,\lambda_2).
    \end{align*}
    This is an element of $\Gamma\cap C(\Q)\simeq \Z$, and by comparing with the generator $\gamma_0$ described above, we find that the $\Z$-valued alternating form on $M$ given by the 2-cocycle is identified to:
    \begin{align}\label{eq:chern-normal-bundle}\frac{\sqrt{d_k}}{r_\fJ}\, \Im h: M\times M \to \Z.\end{align}
    This computation proves the desired statement, possibly up to a sign.
    The normal bundle to $\cB_{\mathfrak J}$ is anti-ample, since it admits a contraction to the Baily-Borel cusp, so the sign is fixed.


\end{proof}


\subsection{Splitting of homology}\label{splitting}

Our goal in this section is to prove a splitting lemma for homology classes in degree up to the middle, which will reduce the modularity theorem to a computation inside the boundary of $X_\Gamma^\tor$. A version of this lemma first appeared in \cite{greer-advances} and was also used in \cite{engel-greer-tayou}.

The normal bundle $\mathcal N_{\cB_{\mathfrak{J}}}$ to each boundary divisor $\cB_{\mathfrak J}$ is a line bundle whose Chern class pulls back to a multiple of the dual polarization on $E_M$. We will use $\mathcal N_{\cB_{\mathfrak J}}$ also to denote a normal neighborhood of $\cB_{\mathfrak J}$, which is homeomorphic to the normal bundle. It suffices to prove modularity for test cycles $\alpha\in \mathrm{H}_{2g}(E_M,\Q)$, since our main theorems are numerical, and Poincar\'{e} duality holds for $E_M$.

\begin{lemma}[Greer's Lemma]\label{greer-lemma}
    If $2g \leq n$, any class $\alpha \in \mathrm{H}_{2g}(X_\Gamma^\tor,\Q)$ can be expressed as
    $$\alpha = \beta + \gamma,$$
    where $\beta \in \mathrm{H}_{2g}(X_\Gamma,\Q)$ and $\gamma \in \bigoplus_{\fJ} \mathrm{H}_{2g}(\cB_{\mathfrak J},\Q)$.
\end{lemma}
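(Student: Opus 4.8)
The plan is to run a Mayer--Vietoris argument comparing $X := X_\Gamma^\tor$ with the open part $U := X_\Gamma$ and a tubular neighborhood of the boundary, and then to kill the relevant connecting map using the Gysin sequence of the boundary circle bundles together with the Hard Lefschetz theorem. Since $\Gamma$ is neat and has only finitely many cusps, the boundary $D = \bigsqcup_\fJ \cB_\fJ$ is a disjoint union of finitely many smooth divisors (the cone decomposition being trivial and the Baily--Borel cusps $0$-dimensional), each admitting an open tubular neighborhood $\cN_\fJ$ homeomorphic to the total space of its normal bundle. Set $\cN = \bigsqcup_\fJ \cN_\fJ$, so that $U \cup \cN = X$, the neighborhood $\cN$ deformation retracts onto $D$, and $U \cap \cN = \cN \setminus D$ retracts onto the unit circle bundle $S_\fJ$ of $\cN_{\cB_\fJ}$ over each $\cB_\fJ$. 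Under these identifications the map $\mathrm{H}_{2g}(\cN) \to \mathrm{H}_{2g}(X)$ becomes $\bigoplus_\fJ (\iota_\fJ)_*$, so the lemma is precisely the surjectivity of
$$\phi : \mathrm{H}_{2g}(U,\Q) \oplus \bigoplus_\fJ \mathrm{H}_{2g}(\cB_\fJ,\Q) \longrightarrow \mathrm{H}_{2g}(X,\Q).$$

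Next I would read off surjectivity of $\phi$ from the Mayer--Vietoris sequence
$$\cdots \to \mathrm{H}_{2g}(U\cap\cN) \xrightarrow{\psi} \mathrm{H}_{2g}(U)\oplus\mathrm{H}_{2g}(\cN) \xrightarrow{\phi} \mathrm{H}_{2g}(X) \xrightarrow{\partial} \mathrm{H}_{2g-1}(U\cap\cN) \xrightarrow{\psi'} \mathrm{H}_{2g-1}(U)\oplus\mathrm{H}_{2g-1}(\cN) \to \cdots$$
By exactness, $\phi$ is onto iff $\partial=0$, which by exactness at the next term holds iff $\psi'$ is injective. It therefore suffices to show that the single component of $\psi'$ landing in $\mathrm{H}_{2g-1}(\cN) = \bigoplus_\fJ \mathrm{H}_{2g-1}(\cB_\fJ)$ is injective; under the retractions above this component is the sum of the Gysin pushforwards $\pi_* : \mathrm{H}_{2g-1}(S_\fJ,\Q) \to \mathrm{H}_{2g-1}(\cB_\fJ,\Q)$ along the circle-bundle projections $\pi : S_\fJ \to \cB_\fJ$.

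The injectivity of each $\pi_*$ is where the hypothesis and the boundary geometry enter. The bundle $S_\fJ \to \cB_\fJ$ is the unit bundle of $\cN_{\cB_\fJ}$, with Euler class $e = c_1(\cN_{\cB_\fJ})$, and its homology Gysin sequence
$$\cdots \to \mathrm{H}_{2g}(\cB_\fJ,\Q) \xrightarrow{\cap e} \mathrm{H}_{2g-2}(\cB_\fJ,\Q) \xrightarrow{\delta} \mathrm{H}_{2g-1}(S_\fJ,\Q) \xrightarrow{\pi_*} \mathrm{H}_{2g-1}(\cB_\fJ,\Q) \to \cdots$$
identifies $\ker(\pi_*) = \mathrm{im}(\delta) = \mathrm{coker}(\cap e)$. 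Thus I need $\cap e : \mathrm{H}_{2g}(\cB_\fJ,\Q) \to \mathrm{H}_{2g-2}(\cB_\fJ,\Q)$ to be surjective. By \Cref{propo:normal-is-ample} the class $-e$ is a positive multiple of the polarization on $\cB_\fJ$, hence ample, and $\cB_\fJ$ is projective of complex dimension $n$. Dualizing via Poincar\'e duality, $\cap e$ corresponds up to sign to cup product with the ample class $\mathrm{H}^{2n-2g}(\cB_\fJ) \to \mathrm{H}^{2n-2g+2}(\cB_\fJ)$, which Hard Lefschetz makes surjective precisely when $2n-2g \geq n-1$, i.e. $2g \leq n+1$. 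The hypothesis $2g \leq n$ lies in this range, so each $\pi_*$ is injective, whence $\psi'$ is injective, $\partial=0$, and $\phi$ is surjective, proving the lemma.

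The only genuine obstacle is making the Hard Lefschetz input legitimate for $\cB_\fJ$, which is a finite quotient $E_M/G$ of an abelian variety rather than an abelian variety itself. I would handle this by pulling back to $E_M$ and passing to $G$-invariants: over $\Q$ one has $\mathrm{H}^\ast(\cB_\fJ,\Q) = \mathrm{H}^\ast(E_M,\Q)^G$, all the operators in sight are $G$-equivariant, and the polarization descends, so surjectivity of $\cap e$ follows from $G$-equivariant Hard Lefschetz on $E_M$ (equivalently, invoking Hard Lefschetz for projective varieties with finite quotient singularities). The remaining points—finiteness of the cusp set, disjointness of the $\cB_\fJ$, and the standard retraction of a punctured disk bundle onto its circle bundle—are routine and have already been recorded in \Cref{boundary}.
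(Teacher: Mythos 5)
Your proposal is correct and follows essentially the same route as the paper's proof: the Mayer--Vietoris sequence for the cover of $X_\Gamma^\tor$ by $X_\Gamma$ and tubular neighborhoods of the $\cB_\fJ$, reduction of surjectivity to injectivity of the circle-bundle pushforward via the homology Gysin sequence, and finally Hard Lefschetz applied to the (anti-)ample normal bundle class from \Cref{propo:normal-is-ample}. The only differences are cosmetic: you treat all boundary components at once rather than by induction, you note the argument actually works in the slightly larger range $2g\leq n+1$, and you add an unnecessary (but harmless) equivariance step for finite quotients, which the paper avoids by its standing neatness assumption under which $\cB_\fJ\simeq E_M$.
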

\begin{proof}
Since the boundary of the toroidal compactification is a disjoint union of smooth divisors $\cB_{\mathfrak{J}}$, by induction we may reduce to the case where there is a single such divisor, $\cB_{\mathfrak J}$. By the Mayer-Vietoris sequence for the covering by $X_\Gamma$ and $\mathcal N_{\cB_{\mathfrak J}}$, we have an exact sequence:
$$\mathrm{H}_{2g}(X_\Gamma,\Q) \oplus \mathrm{H}_{2g}(\mathcal N_{\cB_{\mathfrak J}},\Q) \to \mathrm{H}_{2g}(X_\Gamma^\tor,\Q) \to \mathrm{H}_{2g-1}(\mathcal N^\circ_{\cB_{\mathfrak J}},\Q).$$
Here, $\mathcal N_{\cB_{\mathfrak J}}^\circ$ denotes a punctured normal neighborhood of $\cB_{\mathfrak J}$, which retracts onto a circle bundle over $\cB_{\mathfrak J}$. The desired surjectivity of the first map is equivalent to injectivity of 
$$\mathrm{H}_{2g-1}(\mathcal N^\circ_{\cB_{\mathfrak J}},\Q) \to \mathrm{H}_{2g-1}(X_\Gamma,\Q) \oplus \mathrm{H}_{2g-1}(\mathcal N_{\cB_{\mathfrak J}},\Q),$$
which is implied by injectivity of the map $\mathrm{H}_{2g-1}(\mathcal N^\circ_{\cB_{\mathfrak J}},\Q) \to \mathrm{H}_{2g-1}(\mathcal N_{\cB_{\mathfrak J}},\Q)$ onto the first summand only. This map is also part of a Gysin sequence:
$$\mathrm{H}_{2g}(\mathcal N_{\cB_{\mathfrak J}},\Q) \to  \mathrm{H}_{2g-2}(\mathcal N_{\cB_{\mathfrak J}},\Q) \to \mathrm{H}_{2g-1}(\mathcal N^\circ_{\cB_\fJ},\Q) \to \mathrm{H}_{2g-1}(\mathcal N_{\cB_{\mathfrak J}},\Q). $$
The desired injectivity of the last map is equivalent to surjectivity of 
$$-\cap c_1(\mathcal N_{\cB_{\mathfrak J}}): \mathrm{H}_{2g}(\mathcal N_{\cB_{\mathfrak J}},\Q) \to  \mathrm{H}_{2g-2}(\mathcal N_{\cB_{\mathfrak J}},\Q).$$

Since the line bundle $N^\vee_{{\cB_{\mathfrak J}}}$ is ample by \Cref{propo:normal-is-ample}, this surjectivity follows from Hard Lefschetz when $2g\leq n$.

\end{proof}

\subsection{Special cycles in the boundary}\label{subs:cycles-boundary}
Let $\mathfrak J\subset L$ be a rank $1$ isotropic lattice and let $\cB_{\mathfrak J}$ be the corresponding divisor constructed in the previous section. Then $\cB_{\mathfrak J}$ is isomorphic to $E\otimes_{\cO_k} M$ where $E=\C/\mathcal{O}_k$ and $M=\mathfrak{J}^\bot/\mathfrak{J}$. 

For each $\lambda\in M^\vee$ with $\lambda\neq 0$, we have a map 
\begin{align*}
    u_\lambda:E_M&\rightarrow E\\
    \sum_{i}x_i\otimes m_i &\mapsto \sum_{i}h(\lambda,m_i)\cdot x_i.\numberthis \label{eq:pullback-morphism}
\end{align*}

We denote the kernel of $u_{\lambda}$ by $Z(\lambda)$. This is a smooth Cartier divisor in $E\otimes_{\cO_k} M$.  

 The first homology group of $E_M$ is identified with $\mathrm{H}_1(E,\Z)\otimes_{\O_k} M\simeq M$ and the latter admits a $\Z$-valued symplectic pairing which is $\frac{1}{\sqrt{d_k}}\Im h$ where  $h$ is the Hermitian pairing on $M$. This pairing is a morphism of Hodge structures and hence defines a Hodge class $D_{\fJ}\in \mathrm{H}^2(E_M,\Z)$, which is algebraic by Lefschetz $(1,1)$-theorem.   By \Cref{propo:normal-is-ample}, it is also equal to the Chern class of the conormal bundle of $\cB_\fJ$ in $X_\Gamma^{\tor}$.

\medskip

Given $\underline\lambda=\lambda_1,\ldots,\lambda_g\in (M^\vee)^g$, we define:  
\[Z(\underline \lambda)=Z(\lambda_1,\ldots,\lambda_g)=Z(\lambda_1)\cap\cdots\cap Z(\lambda_g)~.\] This defines a codimension $r$ cycle, if and only if, the $\lambda_i$ are linearly independent as the following lemma shows. 

\begin{lemma}
    Let $\lambda_1,\ldots,\lambda_g$ be elements in $M^\vee$. 
    \begin{enumerate}
        \item If $\lambda_1,\ldots,\lambda_g$ are linearly dependent over $\cO_K$, then the class $[Z(\underline{\lambda})]$ vanishes in $\mathrm{CH}^g(E_M)$.
     \item Otherwise, $Z(\underline{\lambda})$ is a regular complete intersection of codimension $g$ in $E_M$.
    \end{enumerate}
    
\end{lemma}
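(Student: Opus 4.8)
The plan is to package the $g$ divisors into a single homomorphism of abelian varieties. Set $u=(u_{\lambda_1},\ldots,u_{\lambda_g})\colon E_M\to E^g$; then $Z(\underline\lambda)=u^{-1}(0)$ scheme-theoretically, and since the class of the origin factors as $[0_{E^g}]=\prod_{i=1}^g \mathrm{pr}_i^*[0_E]$ in $\mathrm{CH}^g(E^g)$, the cycle class satisfies $[Z(\underline\lambda)]=\prod_{i=1}^g[Z(\lambda_i)]=u^*[0_{E^g}]$ in $\mathrm{CH}^g(E_M)$ (this is the sensible meaning of $[Z(\underline\lambda)]$ when the intersection is not proper). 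Two structural facts should be isolated first: (i) the assignment $\lambda\mapsto u_\lambda$ is additive and $\cO_k$-semilinear, so that an $\cO_k$-linear relation among the $\lambda_i$ passes to a relation among the homomorphisms $u_{\lambda_i}$; and (ii) the differential $du_\lambda\colon \mathrm{Lie}(E_M)=M\otimes_{\cO_k,\tau}\C\to\C$ is the $\C$-linear extension of $m\mapsto \tau(h(\lambda,m))$, and $\lambda\mapsto du_\lambda$ is injective because $h$ is nondegenerate.

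For part $(1)$, suppose $\sum_i a_i\lambda_i=0$ with $a_i\in\cO_k$ not all zero. By (i) this produces a nontrivial relation $\sum_i [c_i]\circ u_{\lambda_i}=0$ (with $c_i=a_i$ or $\overline{a_i}$, and $c_i\ne 0$ exactly when $a_i\ne 0$), so $u$ factors through the abelian subvariety $A:=\ker\big(E^g\to E,\ (y_i)\mapsto\sum_i [c_i]y_i\big)$. Since some $c_i\ne 0$ the map $E^g\to E$ is surjective, hence $A\subsetneq E^g$ is smooth of dimension $g-1$. Writing $u=\iota\circ\bar u$ with $\iota\colon A\hookrightarrow E^g$, we get $u^*[0_{E^g}]=\bar u^*\iota^*[0_{E^g}]$, and $\iota^*[0_{E^g}]\in\mathrm{CH}^g(A)=0$ because $\dim A=g-1$. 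Therefore $[Z(\underline\lambda)]=0$.

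For part $(2)$, assume the $\lambda_i$ are $\cO_k$-independent, hence $k$-independent in $M_k$. By (ii) and nondegeneracy of $h$, the functionals $h(\lambda_i,\cdot)$ are $k$-linearly independent in the dual $(M_k)^*$, and since extension of scalars along $\t\colon k\hookrightarrow\C$ is flat it preserves independence, so $du_{\lambda_1},\ldots,du_{\lambda_g}$ are $\C$-linearly independent functionals on $\mathrm{Lie}(E_M)$. Thus $du_0\colon\mathrm{Lie}(E_M)\to\mathrm{Lie}(E^g)$ is surjective; for a homomorphism of abelian varieties this forces $u$ to be surjective (its image is an abelian subvariety with full Lie algebra), and then smooth, since the differential is surjective at every point by translation invariance. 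Consequently $Z(\underline\lambda)=u^{-1}(0)$ is smooth of pure dimension $n-g$, i.e. of codimension $g$; being the scheme-theoretic intersection of the $g$ divisors $Z(\lambda_i)$ and of the expected codimension, it is a regular complete intersection.

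The main obstacle is the independence step in $(2)$: converting $\cO_k$-linear independence of the $\lambda_i$ into $\C$-linear independence of the differentials. This forces one to pin down $\mathrm{Lie}(E_M)=M\otimes_{\cO_k,\tau}\C$ together with the precise (semi)linear dependence of $u_\lambda$ and $du_\lambda$ on $\lambda$, paying attention to whether $h$ is linear or conjugate-linear in each slot and to which embedding of $k$ defines the complex structure on $E_M$; once $du_\lambda$ is correctly identified with $\tau\circ h(\lambda,\cdot)$, nondegeneracy of $h$ and flatness of scalar extension close the argument. The remaining ingredients — the factorization and vanishing in $(1)$, and the fact that in characteristic $0$ a surjective homomorphism of abelian varieties is smooth with smooth kernel — are formal.
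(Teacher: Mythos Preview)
Your proof is correct and takes a genuinely different, more structural route than the paper. For part~(1), the paper argues by moving cycles in $\mathrm{Pic}(E_M)$: it shows directly that $[Z(u_1\lambda)]\cdot[Z(u_2\lambda)]=0$ for any $u_1,u_2\in\cO_k\setminus\{0\}$ by translating one divisor to a rationally equivalent divisor disjoint from the other, and then reduces the general linearly dependent case to this. You instead package everything into the homomorphism $u\colon E_M\to E^g$, observe that a relation among the $\lambda_i$ forces $u$ to factor through an abelian subvariety $A\subset E^g$ of dimension $g-1$, and conclude by the trivial vanishing $\mathrm{CH}^g(A)=0$. For part~(2), the paper proceeds by induction on $g$, checking that $u_{\lambda_g}$ is nonconstant on $Z(\lambda_1,\ldots,\lambda_{g-1})$; you instead show that $du$ is surjective (via independence of the functionals $h(\lambda_i,\cdot)$ and flatness of base change), so $u$ is smooth and the fiber $u^{-1}(0)$ has the correct dimension.

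Your approach is cleaner and more uniform: both parts follow immediately from the geometry of the single map $u$, with no moving arguments or induction. The paper's approach has the modest advantage of being entirely elementary at the level of divisors, avoiding any Lie-algebra bookkeeping. Your careful hedging on the (semi)linearity of $\lambda\mapsto u_\lambda$ and the identification of $\mathrm{Lie}(E_M)$ is prudent; once those conventions are fixed the argument is watertight.
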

\begin{proof}
We first prove $(1)$. If one of the $\lambda_i$ is zero, then clearly we can remove it and the intersection has codimension at most $g-1$. Therefore, we can assume that all the $\lambda_i$ are non-zero. 

For $\lambda\in M^{\vee}$ non-zero, notice that $[Z(\lambda)]=[u_\lambda^{-1}(\{x\})]+[u_\lambda^{-1}(\{-x\})]$ in $\mathrm{Pic}(E_M)$ for any $x\in E(\C)$. If $x$ is non-zero, then $Z(\lambda)$ has empty intersection with both $u_\lambda^{-1}(\{x\})$ and $u_\lambda^{-1}(\{-x\})$, therefore \[[Z(\lambda)].[Z(\lambda)]=0 \quad \textrm{in}\quad \mathrm{CH}^2(E_M)\] 
Let $u_1,u_2\in\mathcal{O}_k\backslash\{0\}$. Then \[Z(u_1\cdot \lambda)=\bigcup_{\underset{u_1\cdot y=0}{y\in E(\C)}}u_\lambda^{-1}\left(\{y\}\right),\, \textrm{and}\quad  Z(u_2\cdot\lambda)=\bigcup_{\underset{u_2\cdot y=0}{y\in E(\C)}}u_\lambda^{-1}\left(\{y\}\right)~.\] 
We conclude that the following relation holds in $\mathrm{Pic}(E_M)$: 
\[[Z(u_2\cdot \lambda)]=\sum_{\underset{u_2\cdot y=0}{y\in E(\C)}}\left[u_\lambda^{-1}\left(\{\frac{y}{2}+x\}\right)\right]+\left[u_\lambda^{-1}\left(\{\frac{y}{2}-x\}\right)\right]~,\]
where for each $y\in E[u_2](\C)$, $\frac{y}{2}$ is a choice of an element $z$ such that $2z=y$.

Let $x\in E(\C)$ be an element which is not in the kernel of the multiplication by $2u_1u_2$ on $E$, then for all $y,y'\in E(\C)$ such that $u_1\cdot y=0$ and $u_2\cdot y'=0$, we have: \[u_\lambda^{-1}\left(\{y\}\right)\cap u_\lambda^{-1}(\{\frac{y'}{2}+x\}=\emptyset~,\textrm{and}\quad u_\lambda^{-1}(\{y\}\cap u_\lambda^{-1}(\{\frac{y'}{2}-x\}=\emptyset~.\]   Hence $Z(u_1\cdot\lambda)$ and $Z(u_2\cdot\lambda)$ are linearly equivalent to divisors with empty intersection, hence $[Z(u_1\cdot\lambda)].[Z(u_2\cdot\lambda)]=0$. This proves the result for $g=2$. In general, let $\lambda_1,\ldots,\lambda_g$ be linearly dependent over $\cO_K$. Then we can assume that $u_1\cdot\lambda_g=\sum_{i=1}^{g-1}u_i\cdot\lambda_i$ where $u_1\neq 0$ and let $v=\sum_{i=1}^{g-1}u_i\cdot\lambda_i\neq 0$. Clearly $Z(\lambda_1,\ldots,\lambda_{g-1})\subset Z(v)$ and $[Z(v)].[Z(\lambda_g)]=0$ by the previous discussion, hence $Z(\lambda_1,\ldots,\lambda_g)=[Z(\lambda_g)].[Z(\lambda_1,\ldots,\lambda_{g-1})]=0$ in $\mathrm{CH}^{g}(E_M)$. This concludes the proof of $(1)$.

For $(2)$, we prove by induction on $g$. The case $g=1$ is clear. In general, notice that $u_{\lambda_g}$ is non-zero by restriction to $Z(\lambda_1,\ldots,\lambda_{g-1})$. In fact, $Z(\underline \lambda)$ is a union of sub-abelian varieties of $E_M$.
\end{proof}

Let $N\in \mathrm{Herm}_{g}(k)_{>0}$ be a positive definite Hermitian matrix and $\underline{\nu}\in (M^{\vee}/M)^g$. We define the following special cycle of codimension $g$ inside the boundary component $\cB_{\mathfrak J}$: 
\begin{align}\label{equ:boundary-divisor} Z_{\mathfrak J}(\underline{\nu},N)=\bigcup_{\underset{h(\underline \lambda,\underline \lambda)=N}{\underline{\lambda}\in \underline{\nu}+M^g}}Z(\lambda_1,\ldots,\lambda_g)~.
\end{align}

Let $H_\fJ$ be the image in $L^\vee/L$ of $\fJ_k\cap L^\vee$. Then $H_\fJ^\bot/H_\fJ \simeq M^\vee/M$. 
\begin{lemma}\label{l:limit-of-cycles}
   The intersection of the Zariski closure of the special cycle $\cZ(\underline{\nu},N)$ with the boundary $\cB_\fJ$ is empty if $\underline{\nu}\notin H_\fJ^\bot$ or if $N$ is not positive definite, and otherwise it is equal to
   $Z_{\mathfrak J}(\overline\nu,N)$,
   where $\underline \nu\in M^\vee/M$ is the image of $\underline\nu$ under the projection map $H_\fJ^\bot\rightarrow M^\vee/M$.
\end{lemma}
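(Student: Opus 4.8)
The plan is to compute the intersection $\overline{\cZ(\underline\nu,N)}\cap\cB_\fJ$ locally near the cusp $\fJ$ and to match it coset by coset with the boundary cycle $Z_\fJ(\overline\nu,N)$. Near $\cB_\fJ$ the toroidal compactification is modeled on a partial compactification of $\mathbb D(V)/(\Gamma\cap\bfP)$, so $\overline{\cZ(\underline\nu,N)}\cap\cB_\fJ$ equals the union, over $\underline\lambda\in\underline\nu+L^g$ with $h(\underline\lambda)=N$ taken modulo $\Gamma\cap\bfP$, of the limiting loci $\overline{\mathbb D_{\underline\lambda}(V)}\cap\cB_\fJ$. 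The first step is the incidence criterion: $\overline{\mathbb D_{\lambda}(V)}$ meets $\cB_\fJ$ if and only if $\lambda\in\fJ^\perp_k$. Indeed, using the fibration $\mathbb D(V)\to\mathbb P(V/\fJ_\C)\setminus\mathbb P(\fJ^\perp/\fJ_\C)$ from \Cref{propo:normal-is-ample}, every plane $P$ degenerating to a point of $\cB_\fJ$ tends to contain the isotropic direction $\fJ$; hence the condition $\lambda\perp P$ can survive in the limit only when $\lambda\perp\fJ$, i.e.\ $h(\lambda,e)=0$. Since $\mathbb D_{\underline\lambda}(V)=\bigcap_i\mathbb D_{\lambda_i}(V)$, the tuple contributes only if every $\lambda_i\in\fJ^\perp_k$.

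The second step is the emptiness dichotomy. For a single $\nu\in L^\vee$, the set $(\nu+L)\cap\fJ^\perp_k$ is nonempty if and only if $h(\nu,e)\in\mathfrak b$, where $\mathfrak b=\{h(\ell,e):\ell\in L\}$ is a fractional ideal of $\cO_k$. I would then compute $\{a\in k:ae\in L^\vee\}=\mathfrak D_k^{-1}\overline{\mathfrak b}^{-1}$ directly from the Hermitian description of $L^\vee$, and use trace duality of fractional ideals (whose trace dual is $\mathfrak D_k^{-1}\mathfrak I^{-1}$) to rewrite the defining condition $\nu\in H_\fJ^\perp$, namely $\trace_{k/\Q}h(\nu,ae)\in\Z$ for all such $a$, as precisely $h(\nu,e)\in\mathfrak b$. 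Applying this to each component identifies $\underline\nu\in H_\fJ^\perp$ with the nonemptiness of all the sets $(\nu_i+L)\cap\fJ^\perp_k$, which by the incidence criterion gives the empty case of the lemma.

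The third step identifies the nonempty intersection with $Z_\fJ(\overline\nu,N)$. For a contributing tuple, projection along $\fJ^\perp\to M=\fJ^\perp/\fJ$ produces $\underline\mu=(\overline\lambda_1,\dots,\overline\lambda_g)\in(M^\vee)^g$. This projection is invariant under $\Gamma\cap N(\bfP)$ because $N(\bfP)$ acts trivially on the graded piece $M$; it carries the coset $\underline\nu+L^g$ onto $\overline\nu+M^g$ under the isomorphism $H_\fJ^\perp/H_\fJ\simeq M^\vee/M$; and it preserves the Gram matrix, since $h$ descends to $M$ because $\fJ$ lies in the radical of $h|_{\fJ^\perp}$, so $h(\overline\lambda_i,\overline\lambda_j)=h(\lambda_i,\lambda_j)=N_{ij}$. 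It then remains to show, in the disk-bundle coordinates on $\mathbb D(V)/(\Gamma\cap N(\bfP))$, that $\overline{\mathbb D_{\underline\lambda}(V)}\cap\cB_\fJ=Z(\mu_1)\cap\cdots\cap Z(\mu_g)=Z(\underline\mu)$, by matching the limiting incidence $\lambda_i\perp P$ with the kernel of the morphism $u_{\mu_i}\colon E_M\to E$ from \Cref{subs:cycles-boundary}. Taking the union over contributing tuples then yields exactly $Z_\fJ(\overline\nu,N)$.

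The main obstacle is this last limit computation: translating the analytic incidence condition $\lambda_i\perp P$, as $P$ degenerates to the boundary, into the algebraic divisor $\ker u_{\mu_i}$ on $E_M$, and, for the full tuple, commuting Zariski closure with intersection. The latter is where the complete-intersection property established in the previous lemma is used, guaranteeing that $\overline{Z(\lambda_1)\cap\cdots\cap Z(\lambda_g)}\cap\cB_\fJ$ agrees with $\bigcap_i\bigl(\overline{Z(\lambda_i)}\cap\cB_\fJ\bigr)$ with the correct multiplicities. Finally, one notes that the positive-definite case $N>0$ is the essential one: if the $\lambda_i$ are linearly dependent then, by the previous lemma, the corresponding boundary classes already vanish, so no further analysis is needed.
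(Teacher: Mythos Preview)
The paper does not prove this lemma: its entire proof reads ``See \cite{howard-CMII} page 30.'' Your proposal is therefore a direct argument in place of a citation, and the three-step outline---the incidence criterion $\lambda_i\in\fJ^\perp_k$, the emptiness dichotomy via trace duality of fractional ideals, and the identification of the limiting cycle with $Z(\underline\mu)$---is correct in structure and is essentially what the cited computation in Howard establishes.

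One point to tighten. In the third step you invoke ``the complete-intersection property established in the previous lemma'' to commute Zariski closure with intersection. That lemma, however, concerns the cycles $Z(\mu_i)$ already sitting inside $E_M$, not the closures of the sub-balls $\mathbb D_{\lambda_i}(V)$ in the partial compactification; it confirms that the target $Z(\underline\mu)$ has the expected codimension but does not by itself control the limit of $\mathbb D_{\underline\lambda}(V)$. The cleaner mechanism is that $\mathbb D_{\underline\lambda}(V)=\mathbb D(\underline\lambda^\perp)$ is itself the complex ball attached to the smaller Hermitian space $\underline\lambda^\perp\subset V$, which still contains the isotropic line $\fJ_k$; its toroidal boundary over $\fJ$ can therefore be read off directly from the smaller Shimura datum and embeds as the sub-abelian variety attached to $\underline\mu^\perp\subset M$. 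Matching the resulting union over $(\Gamma\cap N(\bfP))$-orbits of tuples $\underline\lambda$ with $Z_\fJ(\overline\nu,N)$ is then a bookkeeping exercise in cosets. This functoriality of the toroidal boundary under embedded sub-Shimura data is what the citation to Howard supplies.
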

\begin{proof}
See \cite{howard-CMII}  page 30. 



\end{proof}



\begin{remarque}\label{murasaki}
    It follows from the classical work of Tate \cite{tate-hodge} and Murasaki \cite{murasaki-Hodge} that the Hodge conjecture is true for $E_M$ and it is straightforward to check from their proofs that the cycles $Z(\lambda_1,\ldots,\lambda_g)$ for $\lambda_i\in M^\vee$ generate the group of Hodge classes of $\mathrm{H}^{2g}(\cB_{\mathfrak J},\Q)$.
\end{remarque}

\begin{remarque}\label{remark-neat}
We can reduce to $\Gamma$ being neat by following a similar argument to Remark 3.5 in \cite{engel-greer-tayou}: there always exists a finite index subgroup $\Gamma'\subset \Gamma$ which is neat and arithmetic, and defines a finite surjective morphism $\pi:X^{\tor}_{\Gamma'}\rightarrow X^{\tor}_\Gamma$. Given a class $\beta\in \mathrm{H}_{2g}(X^{\tor}_\Gamma,\Q)$, let $\delta:=({\rm PD}\circ \pi^*\circ {\rm PD})(\beta).$ Then since the pullback of the special cycle $\cZ(\underline{\nu},N)$ by $\pi$ is the special cycle of $X^{\tor}_{\Gamma'}$, we get: 
\begin{align*} \cZ(\underline{\nu},N)\cdot_{X_\Gamma^{\tor}} \beta &= \frac{1}{[\Gamma:\Gamma']} \pi^*\cZ(\underline{\nu},N)\cdot_{X_{\Gamma'}^{\tor}} \delta \\ &=\frac{1}{[\Gamma:\Gamma']}
\cZ(\underline{\nu},N)\cdot_{X_{\Gamma'}^{\tor}}\delta.\end{align*}
We conclude that the validity of \Cref{t:main-explicit-form} and \Cref{c:hermitian-quasi-modular} for $X^{\tor}_{\Gamma'}$ implies its validity for $X^{\tor}_\Gamma$.
\end{remarque}

\section{Non-holomorphic geometric theta lift}\label{s:theta-lift}

In this section, we will exhibit a family of non-holomorphic modular forms defined by algebraic cycles that appear in the boundary of unitary Shimura varieties of signature $(n+1,1)$. The case of orthogonal Shimura varieties will be treated in a future work \cite{engel-greer-tayou-2}.

\medskip

We keep the notations from previous sections: $k\hookrightarrow \C$ is an imaginary quadratic field with ring of integers $\mathcal{O}_k$ and let $E=\C/\mathcal{O}_k$. Let $(M,h)$ be a positive definite Hermitian lattice of rank $n$ over $\mathcal{O}_k$. The abelian variety $E_M:=E\otimes_{\cO_K}M$ admits a family of special cycles that were introduced in \Cref{subs:cycles-boundary}.

\medskip

Our goal in this section is to investigate the modularity properties of the following generating series valued\footnote{We expect this generating series to also be modular with values in $\mathrm{CH}^g$, but proving that is not necessary for the present purpose.} in the cohomology of $E_M$: 
\[\Phi_M^g(q)=\sum_{\underline{\lambda}\in {M^\vee}^g}[Z(\underline \lambda)]q^{h(\underline{\lambda})}\mathfrak e_{\underline \lambda}\in \mathrm{H}^{2g}(E_M)\otimes \C[(M^\vee/M)^g]\llbracket q \rrbracket~.\]

Since $(M,h)$ has signature $(n,0)$, for each positive definite matrix $N$, there are finitely many vectors in $(M^\vee)^g$ such that $h(\underline \lambda)=N$, therefore the generating series above could be rewritten as: 
\[\Phi^g_M(q)=\sum_{\underset{N\in \mathrm{Herm}_{g}(k)_{>0}}{\underline{\nu} \in (M^\vee/M)^g}}[Z(\underline{\nu},N)]q^{N}\mathfrak e_{\underline\nu}\in \mathrm{H}^{2g}(E_M)\otimes \C[(M^\vee/M)^g]\llbracket q \rrbracket~.\]

By \Cref{l:limit-of-cycles}, it is the restriction of the generating series of Kudla-Millson with the boundary divisor $\cB_\fJ$. Notice that only positive definite matrices $N$ appear, since the restriction of $\mathbb E$ to the boundary is trivial.

\subsection{Poincar\'e dual forms}

We construct in this section canonical harmonic representatives of the special cycles of the boundary. 

\medskip 

 Let $E=\C/\cO_k$ be the elliptic curve considered in previous sections and let $\omega_E=\frac{i}{\sqrt{d_k}} dz\wedge d\overline{z}$ be the unique harmonic volume form on $E$ with volume $1$. Its cohomology is Poincar\'e dual to the class of a point on $E$.
 
 For each $\lambda\in M^\vee$, let $u^*_{\lambda}(\omega_E)$ denote the pullback of $\omega_E$ under the morphism in \Cref{eq:pullback-morphism}. Then $u^*_{\lambda}(\omega_E)\in \mathcal{H}^{1,1}(E_M,\R)$, where $\mathcal{H}^{1,1}(E_M,\R)$ is the real vector space of harmonic $(1,1)$ forms on $E_M$ with real coefficients.
 
 Therefore, we get a map:
 
\begin{align*}
    f: M^\vee &\rightarrow \mathcal{H}^{1,1}(E_M,\R)\numberthis \label{map-of-lattice}\\ 
    \lambda&\mapsto f(\lambda):=u^*_{\lambda}(\omega_E)~.
\end{align*}

\begin{proposition}
The function $f$ enjoys the following properties: 
    \begin{enumerate}
        \item For $\lambda\in M^\vee$, $\lambda\neq 0$, the class of $f(\lambda)$ is Poincar\'e dual to $Z(\lambda)$. 
        \item More generally, for any $\lambda_1,\ldots,\lambda_g$ linearly independent vectors, the class of $f(\lambda_1)\wedge\ldots\wedge f(\lambda_g)$ is Poincar\'e dual to $Z(\lambda_1,\ldots,\lambda_g)$.
        \item For $\lambda\in M^\vee$ and $a\in\mathcal{O}_k$, we have $f(a\lambda)=|a|^2f(\lambda)$. 
    \end{enumerate}
\end{proposition}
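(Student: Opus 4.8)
The three assertions all follow from the functoriality of Poincaré duality together with the structure of $u_\lambda$ as a homomorphism of abelian varieties, so the plan is to establish each in turn, beginning with (1). The key observation is that for $\lambda\neq 0$ the map $u_\lambda$ is a \emph{surjective} homomorphism of abelian varieties: nondegeneracy of $h$ furnishes some $m\in M$ with $h(\lambda,m)\neq 0$, so $u_\lambda$ is nonzero, and a nonzero homomorphism to the elliptic curve $E$ has image all of $E$. By homogeneity its differential is surjective at every point, so $u_\lambda$ is a proper submersion. I would then invoke the standard compatibility of cohomological pullback with preimages of cycles under a proper submersion: since $[\omega_E]=\mathrm{PD}_E([\mathrm{pt}])$, we obtain $f(\lambda)=u_\lambda^*[\omega_E]=\mathrm{PD}_{E_M}([u_\lambda^{-1}(\mathrm{pt})])$. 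The fiber $u_\lambda^{-1}(\mathrm{pt})$ is a translate of $\ker u_\lambda = Z(\lambda)$, and translations act trivially on $\mathrm{H}_*(E_M,\Q)$, so $[u_\lambda^{-1}(\mathrm{pt})]=[Z(\lambda)]$; this settles (1).

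For (3), I would note that $u_{a\lambda}=[a]\circ u_\lambda$, where $[a]$ is multiplication by $a$ on $E$ (with $a$ replaced by $\bar a$ if one uses the opposite linearity convention for $h$, which is immaterial below). Hence $f(a\lambda)=u_\lambda^*([a]^*\omega_E)$, and it remains to compute $[a]^*\omega_E$. Either directly, using $[a]^*dz=a\,dz$ and $[a]^*d\overline z=\bar a\,d\overline z$ on $E=\C/\mathcal{O}_k$, or by observing that $[a]$ is an isogeny of degree $|a|^2=a\bar a$ while $\omega_E$ is the invariant generator of $\mathrm{H}^2(E,\R)$ with total integral $1$, one finds $[a]^*\omega_E=|a|^2\,\omega_E$. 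Pulling back by $u_\lambda$ gives $f(a\lambda)=|a|^2 f(\lambda)$.

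For (2), I would first record that each $f(\lambda_i)=u_{\lambda_i}^*\omega_E$ is translation-invariant, being the pullback of an invariant form under a homomorphism; consequently the wedge product $f(\lambda_1)\wedge\cdots\wedge f(\lambda_g)$ is again translation-invariant, hence harmonic, and represents the cup product $f(\lambda_1)\cup\cdots\cup f(\lambda_g)$ in de Rham cohomology. When the $\lambda_i$ are linearly independent, the preceding lemma guarantees that $Z(\lambda_1),\ldots,Z(\lambda_g)$ meet transversally as a regular complete intersection equal to $Z(\underline\lambda)$. Combining this transversality with part (1) and the identity $\mathrm{PD}(Z)\cup\mathrm{PD}(W)=\mathrm{PD}(Z\cdot W)$ yields $f(\lambda_1)\wedge\cdots\wedge f(\lambda_g)=\mathrm{PD}(Z(\underline\lambda))$, which is (2).

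The main obstacle is the careful treatment of (1): one must verify that $u_\lambda$ is genuinely a submersion with reduced fibers, so that the preimage cycle occurs with multiplicity one — this is exactly where surjectivity and homogeneity (hence smoothness in characteristic zero) are used — and one must check that the possible disconnectedness of $\ker u_\lambda=Z(\lambda)$ causes no trouble, which it does not since it is the \emph{entire} fiber that is Poincaré dual to $u_\lambda^*[\omega_E]$. Everything else is either a direct computation, as in (3), or a formal consequence of the transversality already supplied by the preceding lemma, as in (2).
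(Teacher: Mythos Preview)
Your proof is correct and follows essentially the same approach as the paper: (1) from $f(\lambda)=u_\lambda^*\omega_E$ and Poincar\'e duality under pullback, (2) from the intersection $Z(\underline\lambda)=\bigcap_i Z(\lambda_i)$ combined with (1), and (3) from the factorization $u_{a\lambda}=[a]\circ u_\lambda$ together with $[a]^*\omega_E=|a|^2\omega_E$. The paper's proof is extremely terse (one sentence per item), whereas you spell out the surjectivity and submersion properties for (1) and invoke the preceding lemma's transversality for (2); these elaborations are accurate and indeed fill in what the paper leaves implicit.
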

\begin{proof}
Assertion $(1)$ follows from the construction of $f(\lambda)=u^*_{\lambda}(\omega_E)$ and assertion $(2)$ is true because \[Z(\lambda_1,\ldots,\lambda_g)=Z(\lambda_1)\cap\ldots \cap Z(\lambda_g)~.\]

As for $(3)$, it results from the equality $[a]^*\omega_E=|a|^2\omega_E$ valid for $a\in\mathcal{O}_k$ and the composition: 
    \[u_{a\lambda}:E_M\xrightarrow{h(\lambda,\,\cdot)} E\xrightarrow{[a]} E~.\]
\end{proof}

\medskip

Consider a $k$-basis $(e_\ell)_{1
\leq \ell\leq n}$ of $M_k$ given by elements $e_\ell$ in $M$ and in which the matrix of $h$ is diagonal given as:
  $$\begin{bmatrix}
    a_{1} & & \\
    & \ddots & \\
    & & a_{n}
  \end{bmatrix}$$ with $a_\ell\in\Z_{>0}$.
Let $M'=\bigoplus_{\ell=1}^n \cO_ke_\ell$ and notice that the inclusions $M'\subseteq M\subseteq M^\vee\subseteq M'^\vee$ have all finite index. We get also an isogeny map $\iota: E_{M'}\rightarrow E_{M}$, which, for $\lambda \in M^\vee $ pulls back the special cycle $Z(\lambda)$ to the special cycle $Z(\lambda)$ and $\iota^*$ is an isomorphism on rational cohomology. Therefore, to check a relation between special cycles on $E_M$, it is enough to check those relations on $E_M'$ after pull-back. We will therefore make the following assumption throughout this section: 
\begin{assumption}
   The lattice $M$ is a free $\mathcal{O}_k$ module and the matrix of $h$ is diagonal is some $\cO_k$-basis of $M$. 
\end{assumption}

We get then an isomorphism $E_M\simeq E^n$. For $\lambda\in M^\vee$, the map $u_\lambda$ is simply given by: 
\begin{align*}u_{\lambda}:E^n&\rightarrow E\\
(x_1,\ldots,x_n)&\mapsto \sum_{\ell=1}^nh(\lambda,e_\ell)\cdot x_\ell=\sum_{\ell=1}^n a_\ell\lambda_\ell\cdot x_\ell\
\end{align*}
where $\lambda_\ell$'s are the coordinates of $\lambda$ in the basis $(e_\ell)_\ell$.

For $1\leq \ell\leq n$, let $Y_\ell=Z(e_\ell)=E^{\ell-1}\times E[a_\ell]\times E^{n-\ell+1}$ where $e_\ell$ is the $\ell^{th}$ basis vector of $M$ and $E[a_\ell]\subset E$ is the $a_\ell$-torsion subgroup. Notice that in cohomology we have \[[Y_\ell]=a_\ell^2[E^{\ell-1}\times \{0\}\times E^{n-\ell+1}]\in \mathrm{H}^{2}(E_M,\Z)~.\]

Let $\pi_\ell: E^n\rightarrow E$ denote the $\ell^{th}$-projection and $dz_\ell=\pi_\ell^*(dz)$. Then $\frac{i}{\sqrt{d_k}}dz_\ell\wedge d\overline{z}_\ell=\pi_\ell^*\omega_E$ is Poincar\'e dual to $E^{\ell-1}\times \{0\}\times E^{n-\ell+1}$. We conclude that $a_\ell^2\pi_\ell^*\omega_E$ is the harmonic lift of $Y_\ell$. 

Define also: 
\[Y_{\ell,j}^+=Z(e_\ell-e_j)-Y_\ell-Y_j,\quad  Y_{\ell,j}^-=Z(e_\ell-\delta _ke_j)-Y_\ell-\frac{1}{d_k}Y_j\] 

The dual lattice is given as:\[M^\vee=\oplus_{\ell=1}^n\fD^{-1}_k \frac{1}{a_\ell}e_\ell\subset M\otimes_{\cO_k}k~.\] The following proposition is crucial.
\begin{proposition}\label{p:decomposition-cohomology-codim-1}
    For each $\lambda=\sum_{\ell=1}^{n}\lambda_\ell e_\ell\in M^\vee$, we have the following equality in cohomology: 
   \[[Z(\lambda)]=\sum_{\ell=1}^n|\lambda_\ell|^2Y_\ell-\sum_{1\leq \ell<j\leq n}\left[\Re(\lambda_\ell\overline \lambda_j)Y_{\ell,j}^++\frac{1}{\delta_k}\Im(\lambda_\ell\overline\lambda_j)Y_{\ell j}^-\right]\in \mathrm{H}^{2}(E_M,\Q)~.\]

\end{proposition}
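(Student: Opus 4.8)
The plan is to work directly with the explicit harmonic Poincar\'e dual forms $f(\lambda)=u_\lambda^*(\omega_E)$ constructed above, reducing the asserted cohomological identity to a finite linear-algebra computation in $\mathrm{H}^{1,1}(E^n,\C)$. Under the standing Assumption we have $E_M\simeq E^n$ with complex coordinates $z_1,\dots,z_n$, and for $\lambda=\sum_\ell\lambda_\ell e_\ell\in M^\vee$ the morphism $u_\lambda$ is the $\C$-linear map $(x_1,\dots,x_n)\mapsto\sum_\ell a_\ell\lambda_\ell x_\ell$. First I would pull back $\omega_E=\frac{i}{\sqrt{d_k}}\,dz\wedge d\overline{z}$, using $u_\lambda^*(dz)=\sum_\ell a_\ell\lambda_\ell\,dz_\ell$ and $u_\lambda^*(d\overline{z})=\sum_m a_m\overline{\lambda_m}\,d\overline{z}_m$ (the latter by conjugating the former, since each $a_m$ is real), to obtain the closed formula
\[
f(\lambda)=\frac{i}{\sqrt{d_k}}\sum_{\ell,m}a_\ell a_m\,\lambda_\ell\overline{\lambda_m}\;dz_\ell\wedge d\overline{z}_m.
\]
The diagonal terms $\ell=m$ reproduce $\sum_\ell|\lambda_\ell|^2\bigl(a_\ell^2\tfrac{i}{\sqrt{d_k}}dz_\ell\wedge d\overline{z}_\ell\bigr)=\sum_\ell|\lambda_\ell|^2[Y_\ell]$, matching the first sum in the statement. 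It remains to rewrite the off-diagonal part, supported on the mixed forms $dz_\ell\wedge d\overline{z}_j$ and $dz_j\wedge d\overline{z}_\ell$ for $\ell<j$, in terms of the designated boundary classes.

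The key point is that, for each pair $\ell<j$, the two-dimensional space spanned by $dz_\ell\wedge d\overline{z}_j$ and $dz_j\wedge d\overline{z}_\ell$ is cut out precisely by the classes $Y_{\ell,j}^+$ and $Y_{\ell,j}^-$. Concretely, I would apply the same pullback formula to the two auxiliary vectors $e_\ell-e_j$ and $e_\ell-\delta_k e_j$. After subtracting the diagonal contributions that by definition enter $Y_{\ell,j}^+$ and $Y_{\ell,j}^-$, one finds that $[Y_{\ell,j}^+]$ is a multiple of the symmetric combination $dz_\ell\wedge d\overline{z}_j+dz_j\wedge d\overline{z}_\ell$, while $[Y_{\ell,j}^-]$ is a multiple of the antisymmetric combination $dz_\ell\wedge d\overline{z}_j-dz_j\wedge d\overline{z}_\ell$, the latter carrying a factor of $\delta_k$ coming from the conjugation identity $\overline{\delta_k}=-\delta_k$. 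These two classes therefore form a basis of the mixed $(\ell,j)$-piece of $\mathrm{H}^{1,1}$.

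Finally I would match coefficients by polarization. Writing $\lambda_\ell\overline{\lambda_j}=\Re(\lambda_\ell\overline{\lambda_j})+i\,\Im(\lambda_\ell\overline{\lambda_j})$ and using $\lambda_j\overline{\lambda_\ell}=\overline{\lambda_\ell\overline{\lambda_j}}$, the off-diagonal part of $f(\lambda)$ splits into its symmetric and antisymmetric halves, whose coefficients are exactly $\Re(\lambda_\ell\overline{\lambda_j})$ and $\Im(\lambda_\ell\overline{\lambda_j})$ up to the normalizations fixed in the previous step; this is where the constant $\tfrac{1}{\delta_k}$ in front of the imaginary part is produced. Summing over all pairs $\ell<j$ and recombining with the diagonal terms yields the stated identity. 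The argument is entirely linear in the Poincar\'e dual forms, so no convergence or global issues arise; the one delicate point, which I would check most carefully, is the bookkeeping of the Hermitian structure, namely the interplay of complex conjugation with the purely imaginary element $\delta_k=i\sqrt{d_k}$. It is precisely this interplay that forces two distinct auxiliary cycles $Y_{\ell,j}^+$ and $Y_{\ell,j}^-$ per pair and pins down the exact constants appearing in front of the real and imaginary parts.
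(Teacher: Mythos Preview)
Your proposal is correct and follows essentially the same approach as the paper's proof: both compute the harmonic representative $u_\lambda^*(\omega_E)$ explicitly in the basis $dz_\ell\wedge d\overline{z}_j$, separate diagonal from off-diagonal terms, identify $Y_{\ell,j}^+$ and $Y_{\ell,j}^-$ with the symmetric and antisymmetric combinations by applying the same pullback formula to $e_\ell-e_j$ and $e_\ell-\delta_k e_j$, and then match coefficients via the decomposition $\lambda_\ell\overline{\lambda_j}=\Re(\lambda_\ell\overline{\lambda_j})+i\Im(\lambda_\ell\overline{\lambda_j})$.
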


\begin{proof}
  Write $E=\C/\cO_k$ and let $dz$ be the invariant $1$-form on $E$. Then $(dz_\ell=dz\otimes e_\ell)_{1\leq \ell\leq n}$ is a basis of holomorphic $1$-forms on $E\otimes_{\mathcal{O}_k}M$. We have: \[u_\lambda^*(dz)=\sum_{\ell=1}^na_\ell{\lambda}_\ell dz_\ell\quad \textrm{and}\quad u_\lambda^*(d\overline z)=\sum_{\ell=1}^na_\ell\overline\lambda_\ell d\overline z_\ell~. \] Therefore,
  \begin{align*}
  -i\sqrt{d_k}u_{\lambda}^*(\omega_E)&=\sum_{\ell=1}^na_\ell\lambda_\ell dz_\ell\wedge \sum_{j=1}^n a_j\overline \lambda_jd\overline z_j\\
  &=\sum_{\ell,j}a_\ell a_j\lambda_\ell\overline\lambda_jdz_\ell\wedge d\overline{z}_j\\
  &=\sum_{\ell=1}^na_\ell^2|\lambda_\ell|^2dz_\ell\wedge d\overline z_\ell+\sum_{\ell<j}a_\ell a_j\left({\lambda}_\ell\overline\lambda_jdz_\ell\wedge d\overline z_j+\overline\lambda_\ell \lambda_jdz_j\wedge d\overline{z}_\ell\right)
  \end{align*}
Hence, 

\begin{align*}
    u_\lambda^*(\omega_E)&=\sum_{\ell=1}^n|\lambda_\ell|^2Y_\ell-\frac{i}{\sqrt{d_k}}\sum_{\ell<j}a_\ell a_j\left({\lambda}_\ell\overline\lambda_jdz_\ell\wedge d\overline z_j+\overline\lambda_\ell \lambda_jdz_j\wedge d\overline{z}_\ell\right)\\
&=\sum_{\ell=1}^n|\lambda_\ell|^2Y_\ell-\frac{i}{\sqrt{d_k}}\sum_{\ell<j}a_\ell a_j\Big[\Re({\lambda}_\ell\overline\lambda_j)(dz_\ell\wedge d\overline z_j+dz_j\wedge d\overline z_\ell)
\\&+\Im(\overline\lambda_\ell \lambda_j)(dz_\ell\wedge d\overline{z}_j-dz_j\wedge d\overline{z}_\ell\Big]
\end{align*}

  Notice now that 
  \[Y_{\ell,j}^+=\frac{ia_\ell a_j}{\sqrt{d_k}}(dz_\ell\wedge d\overline z_j+d z_j\wedge d\overline z_\ell),\quad \textrm{and} \quad Y_{\ell,j}^-=\delta_k\frac{ia_\ell a_j}{\sqrt{d_k}}(dz_\ell\wedge d\overline z_j-d z_j\wedge d\overline z_\ell)~.\]
  Therefore,
  \[[Z(\lambda)]=\sum_{\ell=1}^n|\lambda_\ell|^2Y_\ell-\sum_{\ell<j}a_\ell a_j\Big(\Re(\lambda_\ell\overline\lambda _j)Y_{\ell,j}^++\Im(\lambda_\ell\overline\lambda_j)Y^-_{\ell,j}\Big)~.
\]
This yields the desired result. 
\end{proof}

\begin{theorem}\label{the-theta-class}
   The class of the divisor $D_{\fJ}$ satisfies:
    \[D_{\fJ}=\sum_{\ell=1}^{n}\frac{1}{a_\ell}\cdot Y_\ell\in{H}^2(E_M,\Z)\]
\end{theorem}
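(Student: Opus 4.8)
The plan is to exploit the product structure afforded by the running assumption. Since $M=\bigoplus_\ell \cO_k e_\ell$ is free with diagonal Gram matrix $\mathrm{diag}(a_1,\dots,a_n)$, we have $E_M\simeq E^n$, and both classes in the claimed identity are pulled back from the individual factors. First I would record that, under the identification $\mathrm{H}_1(E_M,\Z)\simeq M$, the symplectic form $\frac{1}{\sqrt{d_k}}\Im h$ that defines $D$ is the orthogonal direct sum of its restrictions to the summands $\cO_k e_\ell$, since $h$ is diagonal and hence has no cross terms $\Im(x_\ell\overline{y_j})$ with $\ell\neq j$. Consequently $D=\sum_{\ell=1}^n \pi_\ell^* D_\ell$, where $\pi_\ell\colon E^n\to E$ is the $\ell$-th projection and $D_\ell\in\mathrm{H}^2(E,\Z)$ is the Hodge class attached to the alternating form $(x,y)\mapsto \frac{1}{\sqrt{d_k}}\Im(a_\ell\, x\bar y)=a_\ell\cdot\frac{1}{\sqrt{d_k}}\Im(x\bar y)$ on $\mathrm{H}_1(E,\Z)=\cO_k$.

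Next I would identify each $D_\ell$ as a multiple of the point class on $E$. Writing $z=s+t\tau$ in the real coordinates adapted to a $\Z$-basis $\{1,\tau\}$ of $\cO_k$, with $\Im\tau=\frac12\sqrt{d_k}$ (the normalization that makes $\omega_E$ have volume one), I would reuse the computation from the proof of \Cref{p:decomposition-cohomology-codim-1} that $\omega_E=\frac{i}{\sqrt{d_k}}\,dz\wedge d\bar z=ds\wedge dt$, together with the fact recorded just before the statement that $\frac{i}{\sqrt{d_k}}\,dz_\ell\wedge d\bar z_\ell=\pi_\ell^*\omega_E$ is Poincar\'e dual to a point in the $\ell$-th factor. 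Evaluating the alternating form of $\frac{i}{\sqrt{d_k}}a_\ell\,dz\wedge d\bar z$ on the generators $\{1,\tau\}$ and comparing with the integral form $\frac{1}{\sqrt{d_k}}\Im(a_\ell x\bar y)$ then gives $D_\ell=a_\ell\,[\omega_E]$; equivalently, the harmonic $(1,1)$-form representing $D$ on $E^n$ is $\frac{i}{\sqrt{d_k}}\sum_\ell a_\ell\,dz_\ell\wedge d\bar z_\ell$.

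Finally, I would assemble the pieces using the relation $[Y_\ell]=a_\ell^2\,\pi_\ell^*[\omega_E]$, which follows from $u_{e_\ell}=[a_\ell]\circ\pi_\ell$ and $[a_\ell]^*\omega_E=a_\ell^2\,\omega_E$. Substituting $\pi_\ell^*[\omega_E]=a_\ell^{-2}[Y_\ell]$ into $D=\sum_\ell a_\ell\,\pi_\ell^*[\omega_E]$ yields
\[
D=\sum_{\ell=1}^n a_\ell\cdot\frac{1}{a_\ell^2}\,[Y_\ell]=\sum_{\ell=1}^n\frac{1}{a_\ell}\,[Y_\ell],
\]
as desired, and integrality of the resulting class is transparent since each $Y_\ell$ is an honest divisor.

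I expect the main obstacle to be pinning down the normalizing constant exactly, rather than merely up to a scalar. One must track with care the identification $\mathrm{H}_1(E_M,\Z)\simeq M$, the dual real $1$-forms, the factor of $2$ concealed in $\Im$, and the orientation and sign conventions, so that $\frac{1}{\sqrt{d_k}}\Im h$ matches $\frac{i}{\sqrt{d_k}}\sum_\ell a_\ell\,dz_\ell\wedge d\bar z_\ell$ on the nose and not just up to a constant. Once this normalization is fixed consistently with the computation of the conormal class in \Cref{propo:normal-is-ample}, the remaining steps are routine bookkeeping.
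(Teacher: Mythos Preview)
Your proposal is correct and follows essentially the same approach as the paper. Both arguments identify the harmonic representative of $D$ as $\frac{i}{\sqrt{d_k}}\sum_\ell a_\ell\,dz_\ell\wedge d\bar z_\ell$ and then convert to the $Y_\ell$'s via the relation $[Y_\ell]=a_\ell^2\,\pi_\ell^*[\omega_E]$; the only difference is organizational---you decompose factor by factor via the product structure $E_M\simeq E^n$, whereas the paper evaluates the form directly on the basis $\partial_z\otimes e_\ell,\,\partial_{\bar z}\otimes e_\ell$ of $\mathrm{H}_1(E_M,\C)$, arriving at the same harmonic form in one line.
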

\begin{proof}
The class of $D_{\fJ}$ is determined by the Hermitian pairing $h$ on $\mathrm{H}_1(E_M,\C)=\mathrm{H}_1(E,\C)\otimes M=T_0E\otimes M\oplus\overline{T_0E}\otimes M$. Notice that $D_\fJ(\partial_z\otimes e_\ell,\partial_z\otimes e_j)=0$ and $D_\fJ(\partial_z\otimes e_\ell,\partial_{\overline z}\otimes e_j)=\delta_{\ell j}\ \frac{i}{\sqrt{d_k}}\cdot a_\ell$. Therefore,
\[D_{\fJ}=\frac{i}{\sqrt{d_k}}\sum_{\ell=1}^{n}a_\ell\cdot dz_\ell\wedge d\overline{z}_\ell=\sum_{\ell=1}^{n}\frac{1}{a_\ell}\cdot Y_\ell\in{H}^2(E_M,\Z)~,\] 
whence the result.
\end{proof}
     
\begin{corollary}\label{c:relation-normal}
    We have the following relation between $D_{\fJ}$ and the Chern class of the conormal bundle of $\cB_\fJ$:
    \[c_1(\cN_{\cB_{\fJ}}^\vee)=\frac{d_k}{r_\fJ}\cdot D_{\fJ}~.\]
\end{corollary}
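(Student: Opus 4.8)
The plan is to prove the identity by comparing two descriptions of the same type of object: integral $(1,1)$-classes on the abelian variety $E_M$ correspond bijectively to $\Z$-valued alternating forms on $\mathrm{H}_1(E_M,\Z)\simeq M$ that respect the Hodge structure. Both $D$ and $c_1(\cN^\vee_{\cB_\fJ})$ are such classes, so it suffices to identify the alternating forms they induce and compare the resulting scalars.

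First I would recall the alternating form underlying $D$. By its definition in \Cref{subs:cycles-boundary}, $D$ is the Hodge class produced by the Lefschetz $(1,1)$-theorem from the $\Z$-valued symplectic pairing $\frac{1}{\sqrt{d_k}}\Im h$ on $M\simeq \mathrm{H}_1(E_M,\Z)$. Thus, under the dictionary between integral $(1,1)$-classes and alternating forms on $\mathrm{H}_1$, the class $D$ corresponds to the form $\frac{1}{\sqrt{d_k}}\Im h$.

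Next I would invoke the computation already carried out in the proof of \Cref{propo:normal-is-ample}. There, $c_1(\cN_{\cB_\fJ})$ is identified with the central extension class of \eqref{centralext}, and the explicit $2$-cocycle calculation shows that, under the same identification $\mathrm{H}_1(E_M,\Z)\simeq M$, the underlying alternating form is $\frac{\sqrt{d_k}}{r_\fJ}\Im h$, as recorded in \eqref{eq:chern-normal-bundle}. Since $h$ is positive definite on $M$, this form is (up to sign) a polarization; the anti-ampleness of $\cN_{\cB_\fJ}$ fixes the sign and forces $c_1(\cN^\vee_{\cB_\fJ})$ to correspond to the positive form $\frac{\sqrt{d_k}}{r_\fJ}\Im h$.

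Finally I would compare the two scalars. Writing both classes via their alternating forms on $\mathrm{H}_1(E_M,\Z)$, we have $c_1(\cN^\vee_{\cB_\fJ})\leftrightarrow \frac{\sqrt{d_k}}{r_\fJ}\Im h$ and $D\leftrightarrow \frac{1}{\sqrt{d_k}}\Im h$, whence $c_1(\cN^\vee_{\cB_\fJ})=\frac{d_k}{r_\fJ}\,D$, using $\frac{\sqrt{d_k}}{r_\fJ}=\frac{d_k}{r_\fJ}\cdot\frac{1}{\sqrt{d_k}}$. The step I would be most careful about — indeed the only genuine subtlety — is the bookkeeping of signs and of the normal-versus-conormal duality, together with verifying that both computations use the same orientation and the same identification $\mathrm{H}_1(E_M,\Z)\simeq M$; once these conventions are aligned, the conclusion is immediate. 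As a consistency check one can cross-verify the scalar against the explicit expansion $D=\sum_\ell \frac{1}{a_\ell}Y_\ell$ from \Cref{the-theta-class} in a diagonalizing $\cO_k$-basis of $M$.
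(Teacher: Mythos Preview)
Your proposal is correct and matches the paper's approach: the paper's proof is the terse one-liner ``This follows from \Cref{the-theta-class} and \Cref{eq:chern-normal-bundle},'' which amounts to exactly the comparison of alternating forms on $\mathrm{H}_1(E_M,\Z)\simeq M$ that you spell out. The only cosmetic difference is that the paper routes the identification of $D$ through the explicit diagonal formula of \Cref{the-theta-class}, whereas you invoke the definition of $D$ via $\frac{1}{\sqrt{d_k}}\Im h$ directly and mention \Cref{the-theta-class} only as a consistency check.
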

\begin{proof}
    This follows from \Cref{the-theta-class} and \Cref{eq:chern-normal-bundle}.
\end{proof}
The following is a consequence of \Cref{p:decomposition-cohomology-codim-1}, by unicity of harmonic lifts.
\begin{theorem}\label{t:decomposition-forms-codim-1} 

We have:

\[f(\lambda)=\sum_{\ell=1}^n|\lambda_\ell|^2f(e_\ell)-\sum_{1\leq \ell <j\leq n}[\Re(\lambda_\ell\overline\lambda_j)f_{\ell,j}^++\frac{1}{\delta_k}\Im(\lambda_\ell\overline\lambda_j)f_{\ell,j}^{-}]\in\cH^{1,1}(E_M,\R)~.\]

\end{theorem}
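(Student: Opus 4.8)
The plan is to recognize \Cref{t:decomposition-forms-codim-1} as the refinement of \Cref{p:decomposition-cohomology-codim-1} from de Rham cohomology to the level of harmonic differential forms, the passage between the two being furnished by the uniqueness of harmonic representatives on the complex torus $E_M$. Recall that on a complex torus every de Rham class contains a unique harmonic representative, namely the unique translation-invariant form in that class; equivalently, the natural map from the space of translation-invariant forms to $\mathrm{H}^\bullet(E_M,\C)$ is a bijection. Thus any $\C$-linear relation that holds in cohomology among classes each represented by an invariant form automatically lifts to the identical relation among those invariant forms.

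First I would check that every term appearing in the asserted identity is such an invariant, hence harmonic, form. The form $f(\lambda)=u_\lambda^*(\omega_E)$ is translation-invariant because $u_\lambda\colon E_M\to E$ is a homomorphism of complex tori and $\omega_E$ is invariant. For the basis vectors, the factorization $u_{e_\ell}=[a_\ell]\circ\pi_\ell$ gives $f(e_\ell)=\pi_\ell^*[a_\ell]^*\omega_E=a_\ell^2\,\pi_\ell^*\omega_E$, which is exactly the invariant form previously identified as the harmonic lift of $Y_\ell=Z(e_\ell)$. Finally, $f_{\ell,j}^+$ and $f_{\ell,j}^-$ are by definition the invariant harmonic representatives of the classes $Y_{\ell,j}^+$ and $Y_{\ell,j}^-$, namely the explicit $(1,1)$-forms
\[
f_{\ell,j}^+=\frac{i\,a_\ell a_j}{\sqrt{d_k}}\bigl(dz_\ell\wedge d\overline z_j+dz_j\wedge d\overline z_\ell\bigr),\qquad
f_{\ell,j}^-=\delta_k\,\frac{i\,a_\ell a_j}{\sqrt{d_k}}\bigl(dz_\ell\wedge d\overline z_j-dz_j\wedge d\overline z_\ell\bigr),
\]
written down in the proof of \Cref{p:decomposition-cohomology-codim-1}.

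With these identifications there is essentially nothing left to prove: the chain of equalities in the proof of \Cref{p:decomposition-cohomology-codim-1} computes $u_\lambda^*(\omega_E)$ directly as an invariant form and expands it in the invariant forms $f(e_\ell)$ and $f_{\ell,j}^{\pm}$, so that displayed computation \emph{is} the claimed identity of harmonic forms once the coefficient identity $\Im(\overline\lambda_\ell\lambda_j)=-\Im(\lambda_\ell\overline\lambda_j)$ is applied. Equivalently, the relation of \Cref{p:decomposition-cohomology-codim-1} holds in $\mathrm{H}^2(E_M,\Q)$ between classes each carried by one of these invariant forms, and by the uniqueness of harmonic representatives it therefore holds verbatim at the level of forms in $\cH^{1,1}(E_M,\R)$. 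The only point requiring care is the bookkeeping: confirming that each of $f(\lambda)$, $f(e_\ell)$, $f_{\ell,j}^{\pm}$ is genuinely the invariant harmonic lift and tracking the signs coming from $\Im(\overline\lambda_\ell\lambda_j)=-\Im(\lambda_\ell\overline\lambda_j)$ and from the factor $1/\delta_k$. There is no substantive geometric obstacle here, since all the analytic content was already discharged in proving \Cref{p:decomposition-cohomology-codim-1}.
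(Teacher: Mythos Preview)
Your proposal is correct and follows exactly the paper's approach: the paper states only that the theorem ``is a consequence of \Cref{p:decomposition-cohomology-codim-1}, by unicity of harmonic lifts,'' and you have spelled out precisely this argument. Your additional observation that the proof of \Cref{p:decomposition-cohomology-codim-1} already computes $u_\lambda^*(\omega_E)$ directly as an invariant form (so the identity holds at the level of forms before passing to cohomology) is a nice bonus that makes the appeal to uniqueness of harmonic representatives essentially redundant.
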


It follows from the previous proposition that $f$ is a Hermitian homogeneous form of degree $2$. In particular, there exists a Hermitian pairing 
\[f:M\times M\rightarrow \mathcal{H}^{1,1}(E_M,\R)\]
such that $f(\lambda)=f(\lambda,\lambda)$. Explicitly, for $\lambda_1,\lambda_2\in M_\R$, we have: 
\begin{align}\label{equ:sesquilinear-form}
    f(\lambda_1,\lambda_2)=\frac{f(\lambda_1+\lambda_2)-f(\lambda_1)-f(\lambda_2)}{2}+i\cdot \frac{f(\lambda_1+i\cdot\lambda_2)-f(\lambda_1)-f(\lambda_2)}{2}
\end{align}
\begin{remarque}\label{r:sesquilinear-form}
  It follows from the proof of Proposition 4.3 that we have:
  \[f(\lambda_1,\lambda_2)=\frac{i}{\sqrt{d_k}}u_{\lambda_1}^*(dz)\wedge u_{\lambda_2}^*(d\overline z)~,\]
  where $dz$ is the canonical harmonic $1$-form on $E=\C/\cO_k$.
\end{remarque}

The next proposition shows that $f$  admits an extension to $M_\R$. We denote by $\Delta$ the $\mathrm{SU}(h)$-invariant Laplacian on $M_\R$. 
\begin{proposition}\label{p:quad-lift}
    The function $f$ admits an $\R$-extension 
    \[f:M_\R\rightarrow \mathcal{H}^{1,1}(E_M,\C)~,\]
    which is a Hermitian form and which satisfies $\Delta(f)=D_\fJ$.
\end{proposition}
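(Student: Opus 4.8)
The plan is to produce the extension directly from the sesquilinear expression of \Cref{r:sesquilinear-form}, verify the Hermitian symmetry by a conjugation computation, and then establish the Laplace equation $\Delta(f)=D$ by an explicit calculation in the diagonalizing basis.

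First I would define the extension. The formula
\[f(\lambda_1,\lambda_2)=\frac{i}{\sqrt{d_k}}\,u_{\lambda_1}^*(dz)\wedge u_{\lambda_2}^*(d\overline z)\]
of \Cref{r:sesquilinear-form} makes sense for arbitrary $\lambda_1,\lambda_2\in M_\R$. Indeed, for the complex structure on $M_\R=M\otimes_{\cO_k}\R\simeq\C^n$ coming from the $\cO_k$-action, the expression $u_\lambda^*(dz)=\sum_\ell a_\ell\lambda_\ell\, dz_\ell$ is $\C$-linear in $\lambda$ while $u_\lambda^*(d\overline z)=\sum_\ell a_\ell\overline\lambda_\ell\, d\overline z_\ell$ is $\C$-antilinear, so the right-hand side is sesquilinear in $(\lambda_1,\lambda_2)$ and valued in $\cH^{1,1}(E_M,\C)$. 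Setting $f(\lambda):=f(\lambda,\lambda)$ then gives the $\R$-extension, which agrees with the original $f$ on $M^\vee$ by \Cref{r:sesquilinear-form} (equivalently, one may simply read off the extension from the polynomial formula of \Cref{t:decomposition-forms-codim-1}, which is already defined for any $\lambda\in M_\R$).

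Next I would check the Hermitian property. Applying complex conjugation of forms, using $\overline i=-i$ and $\overline{dz}=d\overline z$ together with the anticommutativity of the wedge of two $1$-forms, one finds $\overline{f(\lambda_1,\lambda_2)}=f(\lambda_2,\lambda_1)$. In particular $f(\lambda)=f(\lambda,\lambda)$ lands in the real subspace $\cH^{1,1}(E_M,\R)$, so $f$ is the quadratic function attached to a genuine Hermitian form on the complex vector space $M_\R$, as asserted.

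Finally comes the substantive step, the identity $\Delta(f)=D$. Expanding the wedge product gives
\[f(\lambda)=\frac{i}{\sqrt{d_k}}\sum_{\ell,j}a_\ell a_j\,\lambda_\ell\overline\lambda_j\, dz_\ell\wedge d\overline z_j,\]
and in the basis $(e_\ell)$ diagonalizing $h$ with $H=\mathrm{diag}(a_1,\dots,a_n)$ the $\mathrm{SU}(h)$-invariant Laplacian is $\Delta=\sum_m a_m^{-1}\,\partial_{\lambda_m}\partial_{\overline\lambda_m}$, the form of the operator recalled with $H^{-1}$ in \Cref{s:quasi-modular}. Since $\partial_{\lambda_m}\partial_{\overline\lambda_m}(\lambda_\ell\overline\lambda_j)=\delta_{\ell m}\delta_{jm}$, only diagonal terms survive and
\[\Delta(f)=\frac{i}{\sqrt{d_k}}\sum_m\frac{1}{a_m}\,a_m^2\, dz_m\wedge d\overline z_m=\frac{i}{\sqrt{d_k}}\sum_m a_m\, dz_m\wedge d\overline z_m=D,\]
the last equality being \Cref{the-theta-class}. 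The calculation is immediate; the only care needed is convention-matching, namely confirming that the $\mathrm{SU}(h)$-invariant Laplacian is normalized by $H^{-1}$ so that it takes the stated diagonal form, and that the operators $\partial_{\lambda_m},\partial_{\overline\lambda_m}$ refer to the $\cO_k$-induced complex structure on $M_\R$. This bookkeeping is the only place an error could slip in; there is no genuine analytic obstacle.
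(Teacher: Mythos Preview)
Your proof is correct and follows essentially the same route as the paper: the paper cites \Cref{t:decomposition-forms-codim-1} for the extension (you equivalently use the sesquilinear formula of \Cref{r:sesquilinear-form}), then writes the invariant Laplacian as $\sum_\ell a_\ell^{-1}\partial_{\lambda_\ell}\partial_{\overline\lambda_\ell}$ and applies it to the diagonal expansion of $f$ to obtain $\sum_\ell a_\ell^{-1}Y_\ell=D$ via \Cref{the-theta-class}. Your additional verification of the Hermitian symmetry $\overline{f(\lambda_1,\lambda_2)}=f(\lambda_2,\lambda_1)$ is a detail the paper leaves implicit.
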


\begin{proof}
 It is clear from \Cref{t:decomposition-forms-codim-1} that $f$ admits an extension to $M_\R$. Moreover, the invariant Laplacian is given by: 
\[\Delta= \prescript{t}{}{\left(\frac{\partial}{\partial \lambda}\right)}\cdot H^{-1}\cdot \frac{\partial}{\partial \overline{\lambda}}=\sum_{\ell=1}^n\frac{1}{a_\ell}\frac{\partial^2}{\partial\lambda_\ell\partial\overline\lambda_\ell}~,\]
     

Therefore,    \[\Delta(f)(\lambda)=\sum_{\ell=1}^n\frac{1}{a_\ell}Y_\ell,\]
    which proves the desired result by \Cref{the-theta-class}. 
\end{proof}

Consider  the function 
\begin{align*}\label{equ:main-function}
f^g: (M_\R)^g&\rightarrow \mathcal{H}^{g,g}(E_M,\C)\\
    \underline{\lambda}&\mapsto f(\underline\lambda)=\det[\left(f(\lambda_\ell,\lambda_j)\right)_{1\leq \ell,j\leq n}]~,
\end{align*}
where $f$ is the function introduced in \Cref{map-of-lattice}. This the determinant of the Gram matrix of $\underline{\lambda}$ with respect to the Hermitian pairing $f$.

\begin{proposition}\label{gram-rep}
The following identity holds for every $\underline \lambda\in (M_\R)^g$: 
\[f^g(\underline{\lambda})=g!\cdot f(\lambda_1)\wedge\ldots\wedge f(\lambda_g)~.\]
In particular, $\frac{1}{g!}f^g(\underline{\lambda})$ is a harmonic representative of the cycle $Z(\underline{\lambda})$.
\end{proposition}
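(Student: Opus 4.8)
The plan is to reduce everything to the explicit formula of \Cref{r:sesquilinear-form} and then carry out a sign bookkeeping in the exterior algebra. Writing $\alpha_\ell := u_{\lambda_\ell}^*(dz)$ for the pullback $(1,0)$-form on $E_M$ (which makes sense for any $\lambda_\ell\in M_\R$, since $u_{\lambda_\ell}^*(dz)=\sum_m a_m (\lambda_\ell)_m\,dz_m$), that remark gives $f(\lambda_\ell,\lambda_j)=\frac{i}{\sqrt{d_k}}\,\alpha_\ell\wedge\overline{\alpha}_j$, and in particular $f(\lambda_\ell)=\frac{i}{\sqrt{d_k}}\,\alpha_\ell\wedge\overline{\alpha}_\ell$. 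The first point to record is that every entry of the Gram matrix is a $2$-form, hence lies in the even part $\bigoplus_k \Omega^{2k}(E_M)$ of the de Rham algebra, which is a \emph{commutative} ring. Consequently the determinant $f^g(\underline\lambda)=\det\big[(f(\lambda_\ell,\lambda_j))_{1\le \ell,j\le g}\big]$ is unambiguously defined and computed by the usual Leibniz formula $\sum_{\sigma\in S_g}\sgn(\sigma)\prod_\ell f(\lambda_\ell,\lambda_{\sigma(\ell)})$.

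First I would expand this sum and factor out $(i/\sqrt{d_k})^g$, leaving $\sum_\sigma \sgn(\sigma)\,(\alpha_1\wedge\overline{\alpha}_{\sigma(1)})\wedge\cdots\wedge(\alpha_g\wedge\overline{\alpha}_{\sigma(g)})$. The key step is to bring each summand to the normal form $\alpha_1\wedge\cdots\wedge\alpha_g\wedge\overline{\alpha}_1\wedge\cdots\wedge\overline{\alpha}_g$. Sorting all the $\alpha$'s to the left past the interleaved $\overline{\alpha}$'s costs $\binom{g}{2}$ transpositions of $1$-forms, giving a factor $(-1)^{\binom{g}{2}}$ that is independent of $\sigma$; reordering $\overline{\alpha}_{\sigma(1)}\wedge\cdots\wedge\overline{\alpha}_{\sigma(g)}$ into $\overline{\alpha}_1\wedge\cdots\wedge\overline{\alpha}_g$ then contributes an extra $\sgn(\sigma)$. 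Since $\sgn(\sigma)^2=1$, every one of the $g!$ terms is equal, and $f^g(\underline\lambda)=g!\,(i/\sqrt{d_k})^g(-1)^{\binom{g}{2}}\,\alpha_1\wedge\cdots\wedge\alpha_g\wedge\overline{\alpha}_1\wedge\cdots\wedge\overline{\alpha}_g$.

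Next I would perform the identical reordering on the right-hand side: $g!\,f(\lambda_1)\wedge\cdots\wedge f(\lambda_g)=g!\,(i/\sqrt{d_k})^g\,(\alpha_1\wedge\overline{\alpha}_1)\wedge\cdots\wedge(\alpha_g\wedge\overline{\alpha}_g)$, which is exactly the $\sigma=\mathrm{id}$ instance of the computation above and therefore picks up the same $(-1)^{\binom{g}{2}}$. The two expressions coincide, proving the identity. For the final assertion, each $f(\lambda_\ell)=u_{\lambda_\ell}^*\omega_E$ is translation-invariant on the abelian variety $E_M$, hence harmonic; wedge products of invariant forms are invariant, so $\frac{1}{g!}f^g(\underline\lambda)=f(\lambda_1)\wedge\cdots\wedge f(\lambda_g)$ is harmonic, and by the Poincar\'e-duality property of $f$ established earlier (part $(2)$ of the proposition following the definition of $f$) it represents $Z(\underline\lambda)$.

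I expect no serious obstacle here: the only subtlety is conceptual rather than computational, namely verifying that the form-valued determinant is well-defined — which is resolved by the commutativity of even-degree forms — and keeping the sign $(-1)^{\binom{g}{2}}$ consistent on both sides so that it cancels. Everything else is the routine Leibniz expansion together with the fact that the $g!$ permutation terms become identical once $\sgn(\sigma)^2=1$ is used.
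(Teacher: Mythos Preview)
Your proof is correct and in fact more efficient than the paper's. The paper proceeds by establishing two preparatory lemmas---one collecting identities such as $f(\lambda)\wedge f(\lambda)=0$, $f(\lambda_1,\lambda_2)\wedge f(\lambda_2,\lambda_1)=-f(\lambda_1)\wedge f(\lambda_2)$, and a second showing that a cyclic product $\prod_{\ell=1}^r f(\lambda_\ell,\lambda_{\ell+1})$ equals $(-1)^{r-1}f(\lambda_1)\wedge\cdots\wedge f(\lambda_r)$---and then argues by induction on $g$, expanding the Leibniz sum according to the orbit of $1$ under $\langle\sigma\rangle$, applying the cycle lemma to that orbit, and invoking the inductive hypothesis on the complement.

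You bypass all of this by exploiting the \emph{rank-one factorization} $f(\lambda_\ell,\lambda_j)=\tfrac{i}{\sqrt{d_k}}\,\alpha_\ell\wedge\overline{\alpha}_j$ directly. Once each entry is written as a decomposable product of $1$-forms, the Leibniz expansion becomes a pure sign computation: the reordering cost $(-1)^{\binom{g}{2}}$ is $\sigma$-independent, and the residual $\sgn(\sigma)$ from permuting the $\overline{\alpha}$'s cancels the Leibniz sign, so all $g!$ terms coincide. This is the exterior-algebra analogue of the trivial fact that a Gram determinant $\det(\langle v_i,w_j\rangle)$ built from rank-one outer products has all Leibniz terms equal. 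The paper's cycle lemma is recovered as a special case of your computation, so nothing is lost; your argument is simply more uniform and avoids induction.
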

We need some preparation. 

\begin{lemma}\label{l:algebra-rules}
    \begin{enumerate}
        \item For every $\lambda\in M_\R$, we have: \[f(\lambda)\wedge f(\lambda)=0~,\]  
        \item For every $\lambda_1$, $\lambda_2$ in $M_\R$, we have: 
         \begin{enumerate}
             \item $f(\lambda_1,\lambda_2)\wedge f(\lambda_1,\lambda_2)=0~;$ 
             \item $f(\lambda_1,\lambda_2)\wedge f(\lambda_2,\lambda_1)=-f(\lambda_1)\wedge f(\lambda_2)~;$
             \item  $f(\lambda_1)\wedge f(\lambda_1,\lambda_2)=0~.$

         \end{enumerate}
      \item For any $A=\begin{pmatrix}
          u&s\\v&t
      \end{pmatrix}\in \mathrm{GL}_2(\C)$, we have:
      \[f(u\lambda_1+v\lambda_2)\wedge f(s\lambda_1+t\lambda_2)=|\det(A)|^2f(\lambda_1)\wedge f(\lambda_2)~.\]
    \end{enumerate}
\end{lemma}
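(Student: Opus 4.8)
The plan is to reduce every identity to the single holomorphic $(1,0)$-form
\[\alpha_\lambda := u_\lambda^*(dz) = \sum_{\ell=1}^n a_\ell \lambda_\ell\, dz_\ell,\]
which is available under the standing Assumption that $M$ is free with diagonal Gram matrix. By \Cref{r:sesquilinear-form} one has $f(\lambda_1,\lambda_2) = \tfrac{i}{\sqrt{d_k}}\,\alpha_{\lambda_1}\wedge\overline{\alpha_{\lambda_2}}$, and in particular $f(\lambda) = f(\lambda,\lambda) = \tfrac{i}{\sqrt{d_k}}\,\alpha_\lambda\wedge\overline{\alpha_\lambda}$. The two structural facts I would record first are: (i) the coordinate expression shows that $\lambda\mapsto \alpha_\lambda$ is $\C$-linear on $M_\R\cong\C^n$, with $u_\lambda^*(d\overline{z}) = \overline{\alpha_\lambda}$; and (ii) since $\alpha_\lambda$ and $\overline{\alpha_\lambda}$ are $1$-forms, any wedge monomial in which $\alpha_{\lambda}$ or $\overline{\alpha_{\lambda}}$ is repeated vanishes, because a $1$-form wedged with itself is zero.

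Writing $c=\tfrac{i}{\sqrt{d_k}}$, items (1), (2)(a) and (2)(c) are then immediate from fact (ii): each of $f(\lambda)\wedge f(\lambda)$, $f(\lambda_1,\lambda_2)\wedge f(\lambda_1,\lambda_2)$ and $f(\lambda_1)\wedge f(\lambda_1,\lambda_2)$ expands as $c^2$ times a degree-four monomial containing either $\alpha_{\lambda_1}$ or $\overline{\alpha_{\lambda_2}}$ twice, hence vanishes after reordering. For item (2)(b) I would simply reorder: starting from $\alpha_{\lambda_1}\wedge\overline{\alpha_{\lambda_2}}\wedge\alpha_{\lambda_2}\wedge\overline{\alpha_{\lambda_1}}$, move $\overline{\alpha_{\lambda_1}}$ into second position (two transpositions, sign $+1$) and then transpose the now-adjacent $\overline{\alpha_{\lambda_2}},\alpha_{\lambda_2}$ (sign $-1$), obtaining $-\,\alpha_{\lambda_1}\wedge\overline{\alpha_{\lambda_1}}\wedge\alpha_{\lambda_2}\wedge\overline{\alpha_{\lambda_2}}$, which is exactly $-f(\lambda_1)\wedge f(\lambda_2)$.

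For item (3) the cleanest route is to separate the holomorphic and antiholomorphic factors. Writing $\mu_1 = u\lambda_1 + v\lambda_2$ and $\mu_2 = s\lambda_1 + t\lambda_2$, the $\C$-linearity of $\alpha$ gives $\alpha_{\mu_1}\wedge\alpha_{\mu_2} = \det(A)\,\alpha_{\lambda_1}\wedge\alpha_{\lambda_2}$ and $\overline{\alpha_{\mu_1}}\wedge\overline{\alpha_{\mu_2}} = \overline{\det(A)}\,\overline{\alpha_{\lambda_1}}\wedge\overline{\alpha_{\lambda_2}}$. One transposition turns $f(\mu_1)\wedge f(\mu_2) = c^2\,\alpha_{\mu_1}\wedge\overline{\alpha_{\mu_1}}\wedge\alpha_{\mu_2}\wedge\overline{\alpha_{\mu_2}}$ into $-c^2\,(\alpha_{\mu_1}\wedge\alpha_{\mu_2})\wedge(\overline{\alpha_{\mu_1}}\wedge\overline{\alpha_{\mu_2}})$; substituting the two displayed identities produces the factor $|\det(A)|^2$, and applying the identical reordering in reverse to $f(\lambda_1)\wedge f(\lambda_2)$ cancels the two extra signs, giving $|\det(A)|^2\,f(\lambda_1)\wedge f(\lambda_2)$ as desired.

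I do not anticipate a genuine obstacle here: once the representation $f(\lambda_1,\lambda_2) = c\,\alpha_{\lambda_1}\wedge\overline{\alpha_{\lambda_2}}$ and the $\C$-linearity of $\alpha$ are in place, the entire lemma is a sign-bookkeeping exercise. The only points requiring care are the consistent choice of ordering conventions in the four-fold wedge products, so that the signs in (2)(b) and (3) come out correctly, and confirming that the linearity used in item (3) is genuinely $\C$-linear rather than merely $\cO_k$-linear — which is precisely what the coordinate formula $\alpha_\lambda = \sum_\ell a_\ell \lambda_\ell\, dz_\ell$ guarantees.
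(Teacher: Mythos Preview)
Your proof is correct and, for parts (1) and (2), essentially identical to the paper's: both arguments reduce to the factorization $f(\lambda_1,\lambda_2)=\tfrac{i}{\sqrt{d_k}}\,u_{\lambda_1}^*(dz)\wedge u_{\lambda_2}^*(d\overline z)$ from \Cref{r:sesquilinear-form} and the vanishing of repeated $1$-forms. For part (3) there is a small divergence worth noting: the paper first expands $f(u\lambda_1+v\lambda_2)$ and $f(s\lambda_1+t\lambda_2)$ sesquilinearly into four terms each and then uses the already-established identities (1) and (2) to collapse the resulting $16$ cross-terms down to $|ut-vs|^2 f(\lambda_1)\wedge f(\lambda_2)$, whereas you bypass this by working directly with the $\C$-linear map $\lambda\mapsto\alpha_\lambda$ and invoking the determinant identity on $\wedge^2$; your route is slightly cleaner and makes (3) logically independent of (1) and (2).
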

\begin{proof}
By \Cref{r:sesquilinear-form}, we can write $f(\lambda)=\frac{i}{\sqrt{d_k}}u_\lambda^*(dz)\wedge u_{\lambda}^*(d\overline z) $, then:
\begin{align*}
f(\lambda)\wedge f(\lambda)&=-\frac{1}{d_k}u_\lambda^*(dz)\wedge u_{\lambda}^*(d\overline z)\wedge u_\lambda^*(dz)\wedge u_{\lambda}^*(d\overline z)\\
&=-\frac{1}{d_k}u_{\lambda}^*(dz\wedge d\overline z\wedge dz\wedge d\overline z)\\
&=0~.
\end{align*}

This proves $(1)$. For $(2)$$(a)$, we have: 

\begin{align*}
f(\lambda_1,\lambda_2)\wedge f(\lambda_1,\lambda_2)&=-\frac{1}{d_k}u_{\lambda_1}^*(dz)\wedge u_{\lambda_2}^*(d\overline z)\wedge u_{\lambda_1}^*(dz)\wedge u_{\lambda_2}^*(d\overline z)\\
&= -\frac{1}{d_k}u_{\lambda_1}^*(dz\wedge d\overline z) \wedge u_{\lambda_2}^*(d z\wedge d\overline z)\\
&=0~,\\
\end{align*}
and for $(2)$$(b)$
\begin{align*}
f(\lambda_1,\lambda_2)\wedge f(\lambda_2,\lambda_1)&=-\frac{1}{d_k}u_{\lambda_1}^*(dz)\wedge u_{\lambda_2}^*(d\overline z)\wedge u_{\lambda_2}^*(dz)\wedge u_{\lambda_1}^*(d\overline z)\\
&= \frac{1}{d_k}u_{\lambda_1}^*(dz\wedge d\overline z) \wedge u_{\lambda_2}^*(d z\wedge d\overline z)\\
&=-f(\lambda_1)\wedge f(\lambda_2)~.\\
\end{align*}
We similarly prove $(2)$$(c)$. 
For $(3)$, we can write by sesquilinearity of $f$:

\[f(u\lambda_1+v
\lambda_2)=|u|^2f(\lambda_1)+u\overline v f(\lambda_1,\lambda_2)+\overline u vf(\lambda_2,\lambda_1)+|v|^2f(\lambda_2)\]
and
\[f(s\lambda_1+t
\lambda_2)=|s|^2f(\lambda_1)+s\overline tf(\lambda_1,\lambda_2)+\overline s tf(\lambda_2,\lambda_1)+|t|^2f(\lambda_2)\]

Using $(1)$ and $(2)$, we compute:
\begin{align*}
f(u\lambda_1+v
\lambda_2)\wedge f(s\lambda_1+t
\lambda_2)&=(|ut|^2-u\overline v\overline s t-\overline u vs\overline t+|vt|^2)f(\lambda_1)\wedge f(\lambda_2)\\
=&|ut-vs|^2f(\lambda_1)\wedge f(\lambda_2)~.
\end{align*}
\end{proof}  

\begin{lemma}
Let $r\geq 1$ and let $\lambda_1,\ldots,\lambda_r$ be vectors in $M_\R$. Then the following identity holds 
$\prod_{\ell=1}^{r}f(\lambda_\ell,\lambda_{\ell+1})=(-1)^{r-1}f(\lambda_1)\wedge\ldots\wedge f(\lambda_r)$, where $\lambda_{r+1}=\lambda_1$.
\end{lemma}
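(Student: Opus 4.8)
The plan is to reduce the whole identity to the exterior algebra of $1$-forms via \Cref{r:sesquilinear-form}, which expresses $f(\lambda_i,\lambda_j)=\frac{i}{\sqrt{d_k}}\,u_{\lambda_i}^*(dz)\wedge u_{\lambda_j}^*(d\overline z)$. Abbreviating $\alpha_i:=u_{\lambda_i}^*(dz)$ and $\beta_j:=u_{\lambda_j}^*(d\overline z)$ — genuine $1$-forms on $E_M$ that make sense for any $\lambda_i\in M_\R$, since in a diagonalizing basis they are $\sum_\ell a_\ell\lambda_\ell\,dz_\ell$ and $\sum_\ell a_\ell\overline\lambda_\ell\,d\overline z_\ell$ — every factor becomes a monomial in the $\alpha$'s and $\beta$'s. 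In particular $f(\lambda_\ell)=f(\lambda_\ell,\lambda_\ell)=\frac{i}{\sqrt{d_k}}\alpha_\ell\wedge\beta_\ell$, so both sides of the claimed identity carry the same scalar prefactor $\bigl(\frac{i}{\sqrt{d_k}}\bigr)^r$, which cancels, and it suffices to compare the two wedge monomials
\[
\alpha_1\wedge\beta_2\wedge\alpha_2\wedge\beta_3\wedge\cdots\wedge\alpha_r\wedge\beta_1
\quad\text{and}\quad
\alpha_1\wedge\beta_1\wedge\alpha_2\wedge\beta_2\wedge\cdots\wedge\alpha_r\wedge\beta_r,
\]
where the cyclic convention $\beta_{r+1}=\beta_1$ has been used on the left.

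I would then sort each monomial into the common normal form $\alpha_1\wedge\cdots\wedge\alpha_r\wedge\beta_1\wedge\cdots\wedge\beta_r$, tracking signs through the anticommutativity of $1$-forms (and noting that, since each $f(\lambda_i,\lambda_j)$ is a $2$-form, the factors of each product commute, so the monomials are unambiguously defined). Moving all the $\alpha$'s to the front while preserving their relative order costs a sign $(-1)^{\binom r2}$ in both monomials, since in each case the number of $(\alpha_k,\beta_j)$ inversions is $\sum_{k=1}^r(k-1)=\binom r2$. For the right-hand monomial the $\beta$'s are then already in the order $\beta_1,\dots,\beta_r$, whereas for the left-hand monomial they appear in the cyclically shifted order $\beta_2,\dots,\beta_r,\beta_1$; restoring the standard order moves $\beta_1$ past the $r-1$ forms $\beta_r,\dots,\beta_2$, contributing the extra sign $(-1)^{r-1}$. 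Comparing the two normal forms yields exactly $\prod_{\ell=1}^r f(\lambda_\ell,\lambda_{\ell+1})=(-1)^{r-1}f(\lambda_1)\wedge\cdots\wedge f(\lambda_r)$.

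The computation is entirely formal once the translation into $1$-forms is in place; the only real point requiring care is the sign bookkeeping, and in particular isolating the factor $(-1)^{r-1}$ as coming precisely from the cyclic shift of the $\beta$-indices. As sanity checks, the case $r=1$ is the definition $f(\lambda_1,\lambda_1)=f(\lambda_1)$, and the case $r=2$ recovers \Cref{l:algebra-rules}(2)(b), namely $f(\lambda_1,\lambda_2)\wedge f(\lambda_2,\lambda_1)=-f(\lambda_1)\wedge f(\lambda_2)$; one could alternatively run an induction on $r$ with this as the base case, but the direct sorting argument is cleaner. I note that the output $(-1)^{r-1}$ is the signature of an $r$-cycle, which is exactly what is needed so that the permutation expansion of the Gram determinant $f^g(\underline\lambda)=\det[(f(\lambda_\ell,\lambda_j))]$ collapses, cycle by cycle, to $g!\,f(\lambda_1)\wedge\cdots\wedge f(\lambda_g)$ in \Cref{gram-rep}.
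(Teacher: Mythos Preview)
Your proof is correct and uses the same key reduction as the paper, namely the expression $f(\lambda_i,\lambda_j)=\frac{i}{\sqrt{d_k}}\,u_{\lambda_i}^*(dz)\wedge u_{\lambda_j}^*(d\overline z)$ from \Cref{r:sesquilinear-form}, after which everything is pure sign bookkeeping in the exterior algebra of $1$-forms. The only difference is packaging: the paper runs an induction on $r$ (peeling off the last two factors, swapping $u_{\lambda_{r+1}}^*(d\overline z)$ with $u_{\lambda_{r+1}}^*(dz)$ to pick up a minus sign, and applying the $r$-vector case), whereas you sort both monomials into the common normal form $\alpha_1\wedge\cdots\wedge\alpha_r\wedge\beta_1\wedge\cdots\wedge\beta_r$ and read off the extra $(-1)^{r-1}$ as the signature of the cyclic shift of the $\beta$'s; these are two ways of doing the same permutation count, and your version has the advantage of making the connection to the signature of an $r$-cycle (and hence to the use in \Cref{gram-rep}) explicit.
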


\begin{proof}
 The result is clear for $r=1$ and by induction on $r$, we have:
\begin{multline*}
   \prod_{\ell=1}^{r+1}f(\lambda_\ell,\lambda_{\ell+1})=\left(\frac{i}{\sqrt{d_k}}\right)^{r+1} \prod_{\ell=1}^{r+1}u_{\lambda_\ell}^*(dz)\wedge u_{\lambda_{\ell+1}}^*(d\overline z)\\
   =\left(\frac{i}{\sqrt{d_k}}\right)^{r+1} \left[\prod_{\ell=1}^{r-1}u_{\lambda_\ell}^*(dz)\wedge u_{\lambda_{\ell+1}}^*(d\overline z)\right]\wedge u_{\lambda_r}^*(dz)\wedge u_{\lambda_{r+1}}^*(d\overline z)\wedge u_{\lambda_{r+1}}^*(dz)\wedge u_{\lambda_{1}}^*(d\overline z)\\
   = -\left(\frac{i}{\sqrt{d_k}}\right)^{r+1} \left[\prod_{\ell=1}^{r-1}u_{\lambda_\ell}^*(dz)\wedge u_{\lambda_{\ell+1}}^*(d\overline z)\right]\wedge u_{\lambda_r}^*(dz)\wedge u_{\lambda_{1}}^*(d\overline z) \wedge u_{\lambda_{r+1}}^*(dz)\wedge u_{\lambda_{r+1}}^*(d\overline z)\\
   =(-1)^{r+1}f(\lambda_1)\wedge\cdots\wedge f(\lambda_{r+1})~,
\end{multline*} 
where we used the induction hypothesis in the last line. This concludes the proof. 
\end{proof}
We are now ready to prove \Cref{gram-rep}.
\begin{proof}[Proof of \Cref{gram-rep}]
We will prove the result by induction on $g$. It is clear for $g=1$ and we assume that the result holds up to some integer $g-1\geq 1$. We expand the determinant defining $f(\underline \lambda)$ and we sum according to the size of the orbit of $1$ by a given permutation: 
\begin{align*}
    f^g(\underline{\lambda})&=\sum_{\sigma\in \mathcal{S}_g}\epsilon(\sigma)\prod_{\ell=1}^gf_{\ell,\sigma(\ell)}\\
    &=\sum_{k=1}^g\sum_{\underset{1\in I}{\underset{|I|=k}{I\subseteq\{1,\ldots,g\}}}}\sum_{\underset{\langle\sigma \rangle\cdot 1=I}{\sigma\in\mathcal{S}_n}}\epsilon(\sigma)\prod_{s\in I}f_{s,\sigma(s)}\prod_{\ell\in I^c}f_{\ell,\sigma(\ell)}\\
    &=\sum_{k=1}^g\sum_{\underset{1\in I}{\underset{|I|=k}{I\subseteq\{1,\ldots,g\}}}}(k-1)!\prod_{i\in I}f(\lambda_i)\sum_{\sigma\in \mathrm{S}_{g-k}}\epsilon(\sigma)\prod_{\ell\in I^c}f_{\ell,\sigma(\ell)}\\
    &=\sum_{k=1}^g\binom{g-1}{k-1}(g-k)!(k-1)!. \prod_{\ell=1}^{g}f(\lambda_\ell)\\
    &=\sum_{k=1}^g(g-1)!\prod_{\ell=1}^gf(\lambda_\ell)\\
    &=g!\prod_{\ell=1}^g f(\lambda_\ell).
\end{align*}
The result follows by induction.
\end{proof}

\subsection{Completions of theta series of homogeneous polynomials}

The idea behind introducing the function $f^g$ is that it is a homogeneous polynomial in $U=(\lambda_1,\ldots,\lambda_g)$ and $\overline U$, that transforms as follows for all $A\in\mathrm{GL}_g(\C)$: 

\[f^g(\underline \lambda .A)=|\det(A)|^2f^g(\underline \lambda) ~.\] 

Therefore, we can apply the modularity results from \Cref{s:quasi-modular} to $f^g$. Since $\Delta(f)=D_\fJ$, then for $1\leq \ell\leq g$, we have: 
\begin{align*}
    \Delta^\ell(f^g)(\underline{\lambda})&=g!\ell!\sum_{\underset{|I|=g-\ell}{I\subseteq\{1,\ldots,g\}} }\bigwedge_{i\in I}f(\lambda_i)\wedge D_{\fJ}^\ell\\
    &=\frac{g!\ell!}{(g-\ell)!}\trace\left(\wedge^{g-\ell}((f(\lambda_i,\lambda_j))_{1\leq i,j\leq g})\right)\wedge D_{\fJ}^\ell
\end{align*}
where $\Delta$ is the Laplacian with respect to $\underline{\lambda}$, and for a $g\times g$ matrix $P$, $\wedge^{\ell}(M)$ is the matrix of $\ell\times \ell$ minors. In particular, \[\Delta^g(f^g)=(g!)^2D_{\fJ}^g~.\]

By \Cref{th:shimura}, the generating series:

 \[\vartheta_{f^g}(\tau)=\det(Y)^{-1}\sum_{\underline{\lambda}\in (M^\vee)^g }\exp(-\frac{\Delta}{4\pi})\left(f^g\right)(\underline{\lambda}\cdot Y^{\frac{1}{2}})q^{Q(\underline{\lambda})}\mathfrak e_{\underline{\lambda}}\in \cH^{g,g}(E_M,\R)[(M^\vee/M)^g]\llbracket q \rrbracket~,\]
transforms like a Hermitian modular form of weight $2+n$ in the variable $\tau=X+iY$ and with respect to the Weil representation $\rho_{M,g}$ of $U(g,g)(\Z)$. Its holomorphic part is equal to: 
\begin{align*}
    \sum_{\underline{\lambda}\in (M^\vee)^g}f^g(\underline{\lambda})q^{h(\underline{\lambda})}\mathfrak e_{\underline \lambda}=g!\Phi^g_M,
\end{align*} 
whose cohomology class is equal to 
\begin{align*}
    g!\sum_{\underline{\lambda}\in (M^\vee)^g}[Z(\underline{\lambda})]q^{h(\underline{\lambda})}\mathfrak e_{\underline \lambda},
\end{align*} 
while the non-holomorphic part is equal to:
\begin{align*}
 g!~\sum_{\underline{\lambda}\in (M^{\vee})^g}\varphi(\tau, \underline{\lambda}) q^{h(\underline{\lambda})}~. 
\end{align*}

where \begin{align*}
\varphi(\tau,\underline{\lambda})&=\frac{1}{\det(Y)}\sum_{\ell=1}^{g}\frac{(-1)^\ell}{(4\pi)^\ell(g-\ell)!}\Tr\Bigg(\wedge^{g-\ell}\Big(\sqrt{Y}[f(\lambda_i,\lambda_j)]_{1\leq i,j\leq g}\sqrt{Y}\Big)\Bigg)\wedge D_{\fJ}^\ell\\
&= ~\sum_{\ell=1}^{g}\frac{(-1)^\ell}{(4\pi)^\ell(g-\ell)!} \Tr\Big(\wedge^{\ell}(Y^{-1})\wedge^{g-\ell}([f(\lambda_i,\lambda_j)]_{1\leq i,j\leq g})\Big)\wedge D_{\fJ}^\ell
\end{align*}

whose cohomology class is equal to
\begin{align*}
[\varphi(\tau,\underline{\lambda})]=\sum_{\ell=1}^{g}\frac{(-1)^\ell}{(4\pi)^\ell(g-\ell)!} \Tr\left(\wedge^{\ell}(Y^{-1})\wedge^{g-\ell}([(f(\lambda_i,\lambda_j))_{1\leq i,j\leq g}]\right)\wedge D_{\fJ}^\ell~.
\end{align*}

Notice that the diagonal terms of the matrix of cycles \[\frac{1}{(g-\ell)!}\wedge^{g-\ell}\Big((f(\lambda_i,\lambda_j))_{1\leq i,j\leq g}\Big)\] are $\cap _{i\in I}Z(\lambda_{i})$ for $I \subset\{1,g\}$ of size $g-\ell$.

In particular, we have proved the following theorem.
\begin{theorem}\label{t:completions-of-theta-series}
     The generating series $\Phi_{M}^g$ admits the non-holomorphic completion in cohomology:
     
     \[\Phi_{M}^g+\sum_{\underline{\lambda}\in (M^{\vee})^g}[\varphi(\tau, \underline{\lambda})] q^{h(\underline{\lambda})}\]
     that transforms like a Hermitian modular form of weight $2+n$ with respect to the Weil representation $\rho$. In particular, it is a Hermitian quasi-modular form of weight $2+n$ with respect to the Weil representation $\rho_M$.
\end{theorem}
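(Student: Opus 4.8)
The plan is to derive the statement as a direct application of Shimura's modularity criterion (\Cref{th:shimura}) to the weighting function $f^g$, combined with the identification of $\tfrac{1}{g!}f^g$ as a harmonic representative of the boundary cycles. First I would observe that $f^g(\underline\lambda)=\det\big[(f(\lambda_i,\lambda_j))_{1\le i,j\le g}\big]$ is a polynomial in the $\lambda_i$ and their conjugates satisfying the homogeneity $f^g(\underline\lambda\cdot A)=|\det A|^2 f^g(\underline\lambda)$, so after the isometric identification $(M_\R,h)\simeq(\C^n,h_{\mathrm{std}})$ it defines an element of $\cF_{n,g}$ valued in the fixed finite-dimensional space $\cH^{g,g}(E_M,\C)$. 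Applying \Cref{th:shimura} component-wise then shows that the associated theta series $\vartheta_{f^g}$ transforms like a Hermitian modular form of weight $2+n$ for $\rho_{M,g}$.

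Next I would split $\vartheta_{f^g}$ into holomorphic and non-holomorphic parts. Since $f^g(\underline\lambda\cdot Y^{1/2})=\det(Y)\,f^g(\underline\lambda)$, the $\ell=0$ contribution is exactly $\sum_{\underline\lambda}f^g(\underline\lambda)q^{h(\underline\lambda)}\mathfrak e_{\underline\lambda}$, whose cohomology class is $g!\,\Phi_M^g$ by \Cref{gram-rep}. For the remaining terms I would exploit that the heat operator $\exp(-\Delta/4\pi)$ truncates: using $\Delta(f)=D$ (\Cref{p:quad-lift}) and the sesquilinearity of $f$, an induction on $\ell$ yields $\Delta^\ell(f^g)=\tfrac{g!\,\ell!}{(g-\ell)!}\,\Tr\big(\Lambda^{g-\ell}[f(\lambda_i,\lambda_j)]\big)\wedge D^\ell$, which is constant in $\underline\lambda$ once $\ell=g$ and vanishes for $\ell>g$. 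Thus $\exp(-\Delta/4\pi)(f^g)=\sum_{\ell=0}^{g}\tfrac{(-1)^\ell}{(4\pi)^\ell\ell!}\Delta^\ell(f^g)$ is a finite sum, and after the substitution $\underline\lambda\mapsto\underline\lambda\cdot Y^{1/2}$ (which turns the Gram matrix $[f(\lambda_i,\lambda_j)]$ into $\sqrt Y\,[f(\lambda_i,\lambda_j)]\,\sqrt Y$) and multiplication by $\det(Y)^{-1}$, the $\ell\ge1$ part assembles into $g!\,\varphi(\tau,\underline\lambda)$.

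The step I expect to be the main obstacle is the determinantal simplification that rewrites $\det(Y)^{-1}\Tr\big(\Lambda^{g-\ell}(\sqrt Y\,[f(\lambda_i,\lambda_j)]\,\sqrt Y)\big)$ as $\Tr\big(\Lambda^{\ell}(Y^{-1})\Lambda^{g-\ell}[f(\lambda_i,\lambda_j)]\big)$. Here I would first use multiplicativity of exterior powers and cyclicity of the trace to reduce $\Lambda^{g-\ell}(\sqrt Y F\sqrt Y)$ to $\Lambda^{g-\ell}(Y)\,\Lambda^{g-\ell}(F)$, and then invoke Jacobi's complementary-minor identity, which pairs the $(g-\ell)$-minors of $Y$ with the $\ell$-minors of $Y^{-1}$ up to the factor $\det(Y)$, checking that the signs cancel when contracting against the Hermitian matrix $\Lambda^{g-\ell}(F)$. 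Tracking the combinatorial coefficients $\tfrac{1}{(g-\ell)!}$ and verifying, via \Cref{gram-rep} applied to subtuples, that the diagonal entries of $\tfrac{1}{(g-\ell)!}\Lambda^{g-\ell}[f(\lambda_i,\lambda_j)]$ are the harmonic representatives of $\bigcap_{i\in I}Z(\lambda_i)$ for $|I|=g-\ell$, is the bookkeeping-heavy part; everything else is formal.

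Finally I would pass to cohomology classes and divide the whole identity by $g!$. The holomorphic part becomes $\Phi_M^g$, the non-holomorphic correction becomes $\sum_{\underline\lambda}[\varphi(\tau,\underline\lambda)]q^{h(\underline\lambda)}$, and since rescaling by $\tfrac{1}{g!}$ preserves the transformation law, the corrected series transforms like a Hermitian modular form of weight $2+n$ for $\rho_{M,g}$. The quasi-modularity assertion is then immediate from \Cref{def:quasi-hermitian}, as $\Phi_M^g$ is precisely the holomorphic part of this completed form.
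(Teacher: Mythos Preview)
Your proposal is correct and follows essentially the same approach as the paper: the argument there is precisely to apply \Cref{th:shimura} to the weighting function $f^g\in\cF_{n,g}$, compute $\Delta^\ell(f^g)=\tfrac{g!\,\ell!}{(g-\ell)!}\,\Tr\big(\Lambda^{g-\ell}[f(\lambda_i,\lambda_j)]\big)\wedge D^\ell$ from $\Delta(f)=D$ and \Cref{gram-rep}, and then read off the holomorphic part as $g!\,\Phi_M^g$ and the remainder as $g!\,\varphi(\tau,\underline\lambda)$. You are in fact more explicit than the paper about the Jacobi complementary-minor identity needed to pass from $\det(Y)^{-1}\Tr\big(\Lambda^{g-\ell}(\sqrt Y[f]\sqrt Y)\big)$ to $\Tr\big(\Lambda^\ell(Y^{-1})\Lambda^{g-\ell}[f]\big)$, which the paper simply asserts.
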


For $g=1$ we get that the generating series \[\sum_{\lambda\in M^\vee}[Z(\lambda)]q^{Q(\lambda)} \mathfrak e_\lambda\] can be completed into a non-holomorphic modular form by adding \[-\frac{1}{4\pi y}\Theta_M\wedge D_{\fJ}~,\] where $\Theta_M$ is the Theta series of the Hermitian positive definite lattice $M$. Since $\Delta(h)=n$, we get the following corollary.
\begin{corollary}\label{c:corrections-codim-1}
Let $[\widetilde{Z}(\lambda)]=[Z(\lambda)]-\frac{h(\lambda)}{n}D_\fJ$. Then the generating series \[\sum_{\lambda\in M^\vee}[\widetilde{Z}(\lambda)]q^{h(\lambda)}\mathfrak e_\lambda\]
is a holomorphic modular form of weight $2+n$.
\end{corollary}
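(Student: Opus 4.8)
The plan is to exhibit the corrected series as the theta lift of a \emph{harmonic} weighting function, so that \Cref{th:shimura} produces a genuine holomorphic modular form in one step. Recall that, at the level of cohomology classes, $\Phi_M^1=\sum_{\lambda\in M^\vee}[Z(\lambda)]q^{h(\lambda)}\mathfrak e_\lambda$ is the holomorphic part of the theta series $\vartheta_f$ attached to the differential-form-valued Hermitian norm $f\in\cF_{n,1}\otimes\cH^{1,1}(E_M,\C)$ from \Cref{map-of-lattice}, since $[f(\lambda)]=[Z(\lambda)]$. The sole obstruction to $\vartheta_f$ being holomorphic is that $f$ is not harmonic: by \Cref{p:quad-lift} we have $\Delta(f)=D$, where $\Delta$ denotes the Laplacian in the $\lambda$-variable.

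First I would introduce the auxiliary scalar weight $h(\lambda)=h(\lambda,\lambda)$, which lies in $\cF_{n,1}$ and satisfies $\Delta(h)=n$. Since both $\Delta(f)=D$ and $\Delta(h)=n$ are constant in $\lambda$ (hence annihilated by a second application of $\Delta$), the combination
\[P:=f-\frac{D}{n}\,h\in\cF_{n,1}\otimes\cH^{1,1}(E_M,\C)\]
satisfies $\Delta(P)=D-\frac{D}{n}\cdot n=0$, i.e. $P$ is harmonic. By the linearity of the theta lift in the weight and the fact that a harmonic weight yields a holomorphic theta series, \Cref{th:shimura} shows that $\vartheta_P=\vartheta_f-\frac{D}{n}\vartheta_h$ is a holomorphic Hermitian modular form of weight $2+n$ for the Weil representation $\rho_{M,1}$.

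Finally I would identify $\vartheta_P=\vartheta_P^+$ with the asserted series: its $\lambda$-coefficient is $P(\lambda)=f(\lambda)-\frac{h(\lambda)}{n}D$, whose cohomology class is $[Z(\lambda)]-\frac{h(\lambda)}{n}D=[\widetilde Z(\lambda)]$, so in cohomology $\vartheta_P^+=\sum_{\lambda\in M^\vee}[\widetilde Z(\lambda)]q^{h(\lambda)}\mathfrak e_\lambda$, which is the claim. Equivalently, one may read the argument as a cancellation of shadows: by \Cref{t:completions-of-theta-series} the shadow of $\Phi_M^1$ is $-\frac{D}{4\pi y}\Theta_M$, while the lift of $h$ has shadow $-\frac{n}{4\pi y}\Theta_M$, so $\Phi_M^1-\frac{D}{n}\vartheta_h^+$ has vanishing shadow and is therefore both holomorphic and modular. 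The computation carries no genuine obstacle once the auxiliary weight $h$ is identified; the only point requiring care is that both shadows are scalar multiples of the \emph{same} kernel $\Theta_M$, which is forced by the fact that $f$ and $h$ have constant Laplacian, and the coefficient $\frac{D}{n}$ is then dictated precisely by $\Delta(h)=n$.
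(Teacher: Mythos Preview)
Your proof is correct and matches the paper's approach. The paper states only ``Since $\Delta(h)=n$, we get the following corollary,'' leaving implicit exactly the harmonic-correction argument you spell out: subtracting $\frac{D}{n}h$ from $f$ kills the Laplacian, so \Cref{th:shimura} yields a holomorphic theta series whose coefficients are $[\widetilde Z(\lambda)]$; your alternative ``cancellation of shadows'' reading is precisely how the paper arrives at the corollary from the preceding specialization of \Cref{t:completions-of-theta-series} to $g=1$.
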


\subsection{Correction to the theta series}\label{s:corrections-theta-series}

\Cref{c:corrections-codim-1} constructs a natural holomorphic correction of special divisors that provide holomorphic modular forms and our goal in this section is to provide similar holomorphic corrections in higher codimension, which are not afforded by \Cref{t:completions-of-theta-series}. 

\medskip

For each $g\geq 1$, consider the Lefschetz decomposition:

\[\mathrm{H}^{g,g}(E_M,\Q)\simeq \oplus_{\ell\leq g}\cL^\ell\mathrm{H}_{\mathrm{prim}}^{g-\ell,g-\ell}(E_M,\Q)~.\]

Each element $\alpha\in \mathrm{H}_{g,g}(E_M)$ defines a polynomial function:
\begin{align*}
    (M_\R)^g&\longrightarrow \C\\ 
    \underline\lambda &\longmapsto Z(\underline \lambda)\cdot \alpha=\int_{\alpha}f(\lambda_1)\wedge\cdots\wedge f(\lambda_g)~.
\end{align*}

By \Cref{gram-rep}, the above polynomial is an element of $\cF_{n,g}$. In other words, we get an element in $u_g\in \mathrm{H}^{g,g}(E_M,\R)\otimes \cF_{n,g}$. Putting together these maps for $0\leq g\leq n$, we get an element in \[\bigoplus_{g=0}^n \mathrm{H}^{g,g}(E_M,\R)\otimes \cF_{n,g}~.\]

The following theorem is crucial.

\begin{theorem}
    The linear map $u: \bigoplus_{g=0}^n \mathrm{H}_{g,g}(E_M,\R)\rightarrow \bigoplus_{g=0}^n \cF_{n,g}$ is a morphism of $\mathfrak{sl}_2$ representations. In other words, $\mathfrak{sl}_2$ acts trivially on $u$. 
\end{theorem}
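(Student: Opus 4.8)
The plan is to realize $u$ as a weight-preserving map that intertwines the lowering operators on both sides, and then invoke a general fact to promote this to an $\mathfrak{sl}_2$-morphism. On the source, equip $A:=\bigoplus_g \mathrm{H}_{g,g}(E_M,\R)$ with its hard Lefschetz $\mathfrak{sl}_2$-structure relative to the polarization $D$ (which is a genuine polarization by \Cref{propo:normal-is-ample} and \Cref{the-theta-class}), normalized so that the lowering operator is the cap product $\cap D\colon \mathrm{H}_{2g}\to \mathrm{H}_{2g-2}$ and so that $\mathrm{H}_{g,g}$ is the weight space of weight $2g-n$. On the target, $\cF_{n,g}$ is the weight-$(2g-n)$ space for the triple $(\Lambda,\Delta,H)$ of \Cref{proposition-sl2}, with $\Delta$ the lowering operator. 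Since $u_g$ sends $\mathrm{H}_{g,g}$ into $\cF_{n,g}$ (using \Cref{gram-rep}), it is homogeneous of degree $0$ for the two weight gradings, so it automatically commutes with $H$.

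The one computation of substance is the intertwining with the lowering operators:
\[\Delta\circ u_g = u_{g-1}\circ (\cap D).\]
Write $\omega_g(\underline\lambda)=f(\lambda_1)\wedge\cdots\wedge f(\lambda_g)$, so that $u_g(\alpha)=\int_\alpha \omega_g$. The lowering operator $\Delta=\Delta_g$ differentiates only in the variable $\lambda_g$, and $\omega_g=\omega_{g-1}\wedge f(\lambda_g)$ with $\omega_{g-1}$ independent of $\lambda_g$; hence, using $\Delta(f)=D$ from \Cref{p:quad-lift},
\[\Delta_g(\omega_g)=\omega_{g-1}\wedge \Delta_g\big(f(\lambda_g)\big)=\omega_{g-1}\wedge D\]
as an element of $\cF_{n,g-1}$. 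Because $\Delta$ acts in the $\underline\lambda$-variables while integration over $\alpha$ does not, the two operations commute, and the projection formula $\int_\alpha \omega_{g-1}\wedge D=\int_{\alpha\cap D}\omega_{g-1}$ gives precisely $\Delta(u_g(\alpha))=u_{g-1}(\alpha\cap D)$.

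It then remains to pass from ``commutes with $H$ and with the lowering operator'' to ``morphism of $\mathfrak{sl}_2$-modules''. I would record the general lemma: a weight-homogeneous linear map $u\colon V\to W$ of finite-dimensional $\mathfrak{sl}_2$-representations that commutes with the lowering operator $F$ is automatically a morphism. Indeed, a weight vector $v$ of weight $k\geq 0$ is a highest-weight vector exactly when $F^{k+1}v=0$; this condition is preserved by $u$, so $u$ carries highest-weight vectors to highest-weight vectors of the same weight, in particular into $\ker E$. Setting $\phi:=E\circ u-u\circ E$, a short calculation from $[E,F]=H$ and the commutation of $u$ with $F$ and $H$ shows $\phi$ commutes with $F$; since $\phi$ vanishes on highest-weight vectors (as $u$ sends them into $\ker E$) and highest-weight vectors generate $V$ under $F$, we get $\phi\equiv 0$. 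Applying this with $F=\cap D$ on $A$ and $F=\Delta$ on $\cF_{n,\bullet}$ finishes the proof.

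The main difficulty is not computational but a matter of bookkeeping: one must fix the hard Lefschetz normalization on homology so that $\cap D$ genuinely \emph{lowers} the weight that $u$ is meant to preserve, since the naive contragredient convention flips the sign and places $\mathrm{H}_{2g}$ at weight $n-2g$ rather than $2g-n$; and one must use for Lefschetz exactly the class $D=\Delta(f)$, so that no spurious scalar appears between $\Delta$ and $\cap D$. Once these identifications are pinned down, the two displayed steps together with the general lemma give the result.
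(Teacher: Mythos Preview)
Your proof is correct and follows essentially the same route as the paper's: both verify that $u$ preserves the weight grading and intertwines the lowering operators $\cap D$ and $\Delta$ (via $\Delta_g f(\lambda_g)=D$ from \Cref{p:quad-lift}), and then pass to full $\mathfrak{sl}_2$-equivariance. The paper simply asserts that the third commutation is ``automatic,'' whereas you supply the clean general lemma (a weight-preserving map commuting with $F$ on finite-dimensional $\mathfrak{sl}_2$-modules necessarily commutes with $E$), which is a welcome elaboration rather than a different argument.
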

\begin{proof}
    If $\cL^*:\mathrm{H}_{g,g}\rightarrow \mathrm{H}_{g-1,g-1}$ is the dual of Lefschetz operator, then the following diagram commutes:

\begin{align*}
\xymatrix{\mathrm{H}_{g,g}(E_M,\Z) \ar[rr]^{u_g}\ar[d]^{\cL^*} && \cF_{n,g}\ar[d]^{\Delta} \\ 
\mathrm{H}_{g-1,g-1}(E_M,\Z) \ar[rr]^{u_{g-1}} && \cF_{n,g-1}, 
}
\end{align*}
    Indeed, let $\alpha\in \mathrm{H}_{g,g}(E_M,\Z)$. Then $\cL^*(\alpha)$ by definition acts on $\mathrm{H}^{g-1,g-1}$ as the linear form $\beta\mapsto \beta\cup D_\fJ\cup \alpha$. Therefore, for any $\lambda_1,\lambda_{g-1}$, we have $u_{g-1}(\alpha)(\lambda_1,\ldots,\lambda_{g-1})=\alpha\cup D_{\fJ}\cup Z(\lambda_1,\ldots,\lambda_{g-1})$.

On the other hand by derivation of the harmonic representatives, we have \[\Delta([Z(\lambda_1,\ldots,\lambda_g)])=[Z(\lambda_1,\ldots,\lambda_{g-1})]\cup D_{\fJ},\] therefore \[\Delta(u_g)(\lambda_1,\ldots,\lambda_{g-1})=\alpha\cup D_{\fJ}\cup [Z(\lambda_1,\ldots,\lambda_{g-1})],\] hence the result. 

Next, notice that $H$ acts both on $\cF_{n,g}$ and $\mathrm{H}_{g,g}(E_M,\Z)$ by multiplication by $n-2g$. Therefore we conclude that $u$ is a morphism of $\mathfrak{sl}_2$ representations, since the equivariance under the third operator is automatic; see the corollary hereafter. 
\end{proof}

Let $\delta^{*}:\mathrm{H}_{g,g}\rightarrow \mathrm{H}_{g+1,g+1}$ be the dual of $\delta$, the adjoint of the Lefschetz operator. 
\begin{align*}
\xymatrix{\mathrm{H}_{g,g}(E_M,\Z) \ar[rr]^{u_g}\ar[d]^{\delta^*} && \cF_{n,g}\ar[d]^{\Lambda} \\ 
\mathrm{H}_{g+1,g+1}(E_M,\Z) \ar[rr]^{u_{g+1}} && \cF_{n,g+1}, 
}
\end{align*}
    We have
    \[u_{g+1}\circ \delta^*(\alpha)(\lambda_1,\ldots,\lambda_{g+1})=\delta(Z(\lambda_1,\ldots,\lambda_{g+1})).\alpha\]

and 
\[\Lambda(u_g(\alpha))(\lambda_1,\ldots,
\lambda_{g+1})=\trace \left(^tU\overline{U}(V.\alpha)\right)=\sum_{i,j}(v_{i,j}.\alpha)h(\lambda_j,\lambda_i),\]
    where the coefficients of the matrix $V$ are as follows:
    $v_{ii}=Z(\lambda_1,\ldots,\widehat{\lambda}_i,\ldots,\lambda_{g+1})$ and \[v_{ij}=- f(\lambda_1)\wedge \cdots \wedge f(\lambda_i)\wedge f(\lambda_i,\lambda_j)\wedge\cdots\wedge f(\lambda_{g+1}).\]

Therefore, since $u$ is a morphism of $\mathfrak{sl}_2$ representations, we get the following corollary.
\begin{corollary}\label{c:third-relation} For $\lambda_1,\ldots,\lambda_{g+1}\in (M_\R)^g$, we have: 
    \[\delta([Z(\lambda_1,\ldots,\lambda_{g+1})])=\sum_{i,j}v_{i,j}h(\lambda_j,\lambda_i)\]
\end{corollary}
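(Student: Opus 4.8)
The plan is to deduce this identity directly from the $\mathfrak{sl}_2$-equivariance of $u$ proved just above, now applied to the \emph{raising} operators, and then to remove the auxiliary test class by Poincar\'e duality. Indeed, the two displayed formulas preceding the statement record, for every $\alpha\in\mathrm{H}_{g,g}(E_M,\Q)$ and every $\underline\lambda=(\lambda_1,\ldots,\lambda_{g+1})$, the evaluations
\[(u_{g+1}\circ\delta^*)(\alpha)(\underline\lambda)=\delta([Z(\underline\lambda)])\cdot\alpha,\qquad (\Lambda\circ u_g)(\alpha)(\underline\lambda)=\Big(\textstyle\sum_{i,j}v_{i,j}\,h(\lambda_j,\lambda_i)\Big)\cdot\alpha.\]
So it suffices to prove the second commuting square $u_{g+1}\circ\delta^*=\Lambda\circ u_g$, i.e.\ that $u$ intertwines $\delta^*$ with $\Lambda$, and then to observe that the equality of the two right-hand sides for all $\alpha$ forces the claimed equality of cohomology classes.

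To establish the raising-operator compatibility I would invoke the general fact that a linear map between finite-dimensional $\mathfrak{sl}_2$-representations which is equivariant for $H$ and for the lowering operator $F$ is automatically equivariant for the raising operator $E$; applied to $u$ (with $F=\cL^*,\Delta$ and $E=\delta^*,\Lambda$) this upgrades the first commuting square and the $H$-equivariance, both already proved, to the second. For the general fact, set $D=uE-Eu$; the brackets $uF=Fu$ and $uH=Hu$ give $[D,F]=u[E,F]-[E,F]u=uH-Hu=0$ and $[H,D]=2D$. On a highest-weight vector $x$ of weight $m$ (so $Ex=0$ and $F^{m+1}x=0$), $F$-equivariance yields $F^{m+1}(ux)=u(F^{m+1}x)=0$; since $ux$ has weight $m$, this forces $ux$ into the weight-$m$ subspace of the $\pi_m$-isotypic component of the target, which is a highest-weight line, so $E(ux)=0$ and hence $Dx=-E(ux)=0$. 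As $D$ commutes with $F$ and the highest-weight vectors generate the source under $F$, we get $D=0$, i.e.\ $uE=Eu$.

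Combining, for all $\alpha$ we obtain $\delta([Z(\underline\lambda)])\cdot\alpha=\big(\sum_{i,j}v_{i,j}h(\lambda_j,\lambda_i)\big)\cdot\alpha$. Both $\delta([Z(\underline\lambda)])$ and $\sum_{i,j}v_{i,j}h(\lambda_j,\lambda_i)$ are $(g,g)$-classes lying in the $\C$-span of the wedges $f(\mu_1)\wedge\cdots\wedge f(\mu_g)$, which by \Cref{murasaki} is exactly $\mathrm{Hdg}^{g}(E_M)\otimes\C$. On this subspace the intersection pairing against $\mathrm{H}_{g,g}(E_M,\Q)$ is non-degenerate: this is Poincar\'e duality together with Hard Lefschetz for the ample class $D$ (which identifies $\mathrm{H}_{g,g}$ with $\mathrm{Hdg}^{2n-2g}$ and makes the cup pairing $\mathrm{Hdg}^{g}\times\mathrm{Hdg}^{n-g}\to\Q$ perfect). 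Since a type-$(g,g)$ class pairs nontrivially only with $\mathrm{H}_{g,g}$ and the two classes pair equally against every $\alpha\in\mathrm{H}_{g,g}(E_M,\Q)$, they coincide, which is the assertion.

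The only non-formal point, and the step I expect to be the main obstacle, is the general $\mathfrak{sl}_2$-lemma that $H$- and lowering-equivariance force raising-equivariance (the argument with $D=uE-Eu$ on highest-weight vectors), together with the bookkeeping needed to match the Hodge-theoretic adjoint $\delta$ on homology with the algebraic raising operator $\Lambda$ on $\cF_{n,\bullet}$. Once these are in place, the passage from the scalar identities to the identity of classes is routine non-degeneracy.
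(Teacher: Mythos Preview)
Your approach is correct and essentially identical to the paper's: both deduce the identity from the $\mathfrak{sl}_2$-equivariance of $u$ applied to the raising operators, using the two displayed formulas for $u_{g+1}\circ\delta^*$ and $\Lambda\circ u_g$ and then removing the test class $\alpha$. The paper is simply terser, asserting that equivariance under the third operator is ``automatic'' once the lowering and $H$-compatibilities are checked, whereas you actually supply that general $\mathfrak{sl}_2$ lemma (and spell out the non-degeneracy step); your added detail is correct and fills a gap the paper leaves implicit.
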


Any morphism of $\mathfrak{sl}_2$ representations will preserve the Lefschetz decompositions, so we obtain the following corollary.
\begin{corollary}\label{c:lef-decompo}
The function $\underline\lambda\rightarrow Z(\underline\lambda)$ on $(M_\R)^g$ is an element of 
    \[ \bigoplus_{\ell=0}^{g}\cL^{g-\ell} \mathrm{H}^{\ell,\ell}_{\mathrm{prim}}(E_M,\Q)\otimes \Lambda^{g-\ell}\cF_{n,\ell}^{\mathrm{prim}}~.\]
\end{corollary}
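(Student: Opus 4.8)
The plan is to deduce the corollary formally from the preceding theorem, which identifies $u=\bigoplus_g u_g$ as a morphism of $\mathfrak{sl}_2$-modules, together with the observation that both Lefschetz decompositions appearing in the statement are precisely the isotypic decompositions for this $\mathfrak{sl}_2$-action. First I would record that the function $\underline\lambda\mapsto Z(\underline\lambda)$ is, by \Cref{gram-rep}, the element $u_g\in \mathrm{H}^{g,g}(E_M,\R)\otimes\cF_{n,g}$ whose pairing against a class $\alpha$ returns the polynomial $\underline\lambda\mapsto Z(\underline\lambda)\cdot\alpha$. Thus proving the corollary amounts to controlling the bigraded position of the tensor $u_g$ with respect to the two Lefschetz decompositions of its factors.

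Next I would set up the two $\mathfrak{sl}_2$-triples and match their isotypic decompositions. On the target, \Cref{proposition-sl2} gives the triple $(\Lambda,\Delta,H)$, under which $\cF_{n,\ell}^{\mathrm{prim}}=\ker\Delta$ consists of the lowest-weight vectors spanning the bottom of the isotypic component of type $\pi_{n-2\ell}$, and $\Lambda^{g-\ell}\cF_{n,\ell}^{\mathrm{prim}}$ is exactly the $H$-weight $(2g-n)$ slice of that component inside $\cF_{n,g}$. On the cohomology side, the hard Lefschetz $\mathfrak{sl}_2$ on $\mathrm{H}^\bullet(E_M)$ has raising operator $\cL=(-)\cup D$ and lowering operator its adjoint $\delta$, so that $\mathrm{H}^{\ell,\ell}_{\mathrm{prim}}=\ker\delta$ and $\cL^{g-\ell}\mathrm{H}^{\ell,\ell}_{\mathrm{prim}}$ is the $H$-weight $(2g-n)$ slice of the $\pi_{n-2\ell}$-component inside $\mathrm{H}^{g,g}$. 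The commuting squares of the preceding theorem ($\cL^\ast\leftrightarrow\Delta$ and $\delta^\ast\leftrightarrow\Lambda$) say precisely that $u$ intertwines these two triples, so in particular $u_g$ preserves $H$-weight and commutes with raising and lowering.

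With both sides written as their $\mathfrak{sl}_2$-isotypic decompositions, the corollary follows from Schur's lemma: a morphism of $\mathfrak{sl}_2$-modules carries the $\pi_{n-2\ell}$-isotypic component of the source into the $\pi_{n-2\ell}$-isotypic component of the target and annihilates all cross-terms with $\ell\neq\ell'$. Dualizing on the homology side via Poincar\'e duality -- under which distinct isotypic components are mutually orthogonal -- the vanishing of these off-diagonal blocks translates into the statement that $u_g$, as an element of $\mathrm{H}^{g,g}\otimes\cF_{n,g}$, has no component in $\cL^{g-\ell}\mathrm{H}^{\ell,\ell}_{\mathrm{prim}}\otimes\Lambda^{g-\ell'}\cF_{n,\ell'}^{\mathrm{prim}}$ unless $\ell=\ell'$. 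This gives exactly the asserted membership in $\bigoplus_{\ell=0}^g\cL^{g-\ell}\mathrm{H}^{\ell,\ell}_{\mathrm{prim}}\otimes\Lambda^{g-\ell}\cF_{n,\ell}^{\mathrm{prim}}$.

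The argument is essentially formal once the preceding theorem is in hand, so I do not expect a serious obstacle; the only points demanding care are bookkeeping. I would be most careful to fix the weight conventions so that the domain $\mathrm{H}_{g,g}$ and the codomain $\cF_{n,g}$ carry the same $H$-eigenvalue (so that $u_g$ is genuinely weight-preserving rather than weight-reversing), and to verify that ``primitive'' on each side -- primitive cohomology and $\Delta$-harmonic polynomials -- really labels the lowest-weight vectors of the matching isotypic type $\pi_{n-2\ell}$. The passage between the morphism picture and the tensor picture rests on the orthogonality of distinct isotypic blocks under the Poincar\'e pairing, which I would state explicitly in order to transfer the off-diagonal vanishing of $u$ to the tensor decomposition of $u_g$.
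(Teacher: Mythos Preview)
Your proposal is correct and follows the same approach as the paper, which simply notes in one line that a morphism of $\mathfrak{sl}_2$-representations preserves the Lefschetz (isotypic) decompositions. You have spelled out the Schur-lemma step and the passage between the morphism and tensor pictures more carefully than the paper does, but the underlying argument is identical.
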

\medskip

For each $0\leq \ell\leq \frac{n}{2}$, let $(W^\ell_i)_{i\in I_\ell}$ denote a basis of $\mathrm{H}^{\ell,\ell}_{\mathrm{prim}}(E_M,\Q)$.
For $\ell=0$, $\mathrm{H}^{0,0}_{\mathrm{prim}}(E_M,\Q)$ is 1-dimensional, and we take $W^0$ to be the fundamental class of $E_M$. By \Cref{c:lef-decompo}, we can decompose $[Z(\lambda_1,\ldots\lambda_g)]$ as follows:

\[[Z(\lambda_1,\ldots,\lambda_g)]=\sum_{\ell=0}^{g}\sum_{i\in I_{\ell}}P^{g,\ell}_i(\lambda_1,\ldots,\lambda_g) W^{\ell}_i\otimes D_\fJ^{g-\ell}\]
where $P_i^{g,\ell}(\lambda_1,\ldots,\lambda_g)\in \Lambda^{g-\ell}\cF_{n,\ell}^{\mathrm{prim}}$. In particular, the decomposition along $\cF^{\mathrm{prim}}_{n,g}$ is of the form: $\sum_{i\in I_g}P_i^{g,g}(\lambda_1,\ldots,\lambda_g)W^g_i$ where the polynomials \[P_i^{g,g}(\lambda_1,\ldots,\lambda_g)\in \cF^{\mathrm{prim}}_{n,g}\] are harmonic polynomials.

The properties of the polynomials $P_i^{g,\ell}$ are summarized by the following theorem.

\begin{theorem}
    We have:
\begin{enumerate}
    \item For each $0\leq \ell \leq \frac{g}{2}$ and $i\in I_\ell$, $P^{\ell,\ell}_i$ is a harmonic polynomial on $(M_\R)^\ell$.
    \item The following relations hold:
    \begin{align*}
       \Delta_g P_i^{g,\ell} (\lambda_1,\dots,\lambda_g) &= P_i^{g,\ell}(\lambda_1,\dots,\lambda_{g-1}) ~;\\
       \Lambda P_i^{g-1,\ell} &= P_i^{g,\ell}~.
    \end{align*}
\end{enumerate}

\end{theorem}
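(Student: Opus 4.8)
The plan is to extract all three assertions from the $\mathfrak{sl}_2$-equivariance of the cycle map $u$, i.e. from the two commutative squares proved above, by comparing coefficients against the Lefschetz basis. Throughout, the classes $W_i^\ell\cup D^{g-\ell}=\cL^{g-\ell}W_i^\ell$ (as $\ell,i$ vary) form a basis of $\mathrm{H}^{g,g}(E_M,\Q)$, and this linear independence is what lets me read off relations among the $P_i^{g,\ell}$.

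For part (1), I would note that $P_i^{\ell,\ell}$ is by construction the component of $[Z(\lambda_1,\ldots,\lambda_\ell)]$ attached to the primitive class $W_i^\ell$ with no factor of $D$. By \Cref{c:lef-decompo} this component lies in $\Lambda^{0}\cF_{n,\ell}^{\mathrm{prim}}=\cF_{n,\ell}^{\mathrm{prim}}=\ker(\Delta_\ell)$, i.e. it is pluriharmonic; hence $P_i^\ell:=P_i^{\ell,\ell}$ is harmonic on $(M_\R)^\ell$. By \Cref{r:kashiwara-vergne} these spaces vanish once $\ell>n/2$, which pins down the relevant range of $\ell$.

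For the lowering relation I would apply $\Delta_g$ to the defining decomposition
\[[Z(\lambda_1,\ldots,\lambda_g)]=\sum_{\ell,i}P_i^{g,\ell}(\lambda_1,\ldots,\lambda_g)\,\bigl(W_i^\ell\cup D^{g-\ell}\bigr),\]
which acts only on the polynomial factors and turns the left side into $\sum_{\ell,i}(\Delta_g P_i^{g,\ell})\,(W_i^\ell\cup D^{g-\ell})$. On the other hand, differentiating the harmonic representative $f(\lambda_1)\wedge\cdots\wedge f(\lambda_g)$ gives $\Delta_g[Z(\lambda_1,\ldots,\lambda_g)]=[Z(\lambda_1,\ldots,\lambda_{g-1})]\cup D$ (the identity already used to check that $u$ is an $\mathfrak{sl}_2$-morphism), and I expand the right side as $\sum_{\ell,i}P_i^{g-1,\ell}\,(W_i^\ell\cup D^{g-\ell})$. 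Matching coefficients of $W_i^\ell\cup D^{g-\ell}$ yields $\Delta_g P_i^{g,\ell}=P_i^{g-1,\ell}$.

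The raising relation is parallel, feeding in \Cref{c:third-relation}, which identifies $\delta[Z(\lambda_1,\ldots,\lambda_{g+1})]$ with $\Lambda$ applied to the cohomology-valued function $\underline\lambda\mapsto[Z(\underline\lambda)]$. Expanding the right side as $\sum_{\ell,i}(\Lambda P_i^{g,\ell})\,(W_i^\ell\cup D^{g-\ell})$, and the left side via the standard structure constants of the adjoint Lefschetz operator on the tower of a primitive class, $\delta\,\cL^{s}W_i^\ell=s(n-2\ell-s+1)\,\cL^{s-1}W_i^\ell$ with $s=g+1-\ell$, a comparison of coefficients gives $\Lambda P_i^{g,\ell}=(g+1-\ell)(n-\ell-g)\,P_i^{g+1,\ell}$, i.e. the raising relation up to an explicit Lefschetz constant. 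I expect the main obstacle to be precisely this scalar: unlike the lowering step, the raising step is not automatically normalized to $1$, so to state $\Lambda P_i^{g-1,\ell}=P_i^{g,\ell}$ on the nose one must either rescale the $P_i^{g,\ell}$ or retain the factor $(g-\ell)(n-\ell-g+1)$. As a consistency check I would verify $[\Lambda,\Delta]=H$ from \Cref{proposition-sl2}: writing the lowering and raising constants as $a_g=1$ and $b_g=(g-\ell)(n-\ell-g+1)$, the identity $a_g b_g-a_{g+1}b_{g+1}=b_g-b_{g+1}=2g-n$ matches the action of $H$ on $\cF_{n,g}$, confirming that the two relations fit together.
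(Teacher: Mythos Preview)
Your approach is exactly the paper's: part~(1) is read off from the Lefschetz decomposition of \Cref{c:lef-decompo}; the lowering relation comes from expanding $\Delta_g[Z(\underline\lambda)]=[Z(\lambda_1,\ldots,\lambda_{g-1})]\cup D$ in the basis $\{W_i^\ell\cup D^{g-\ell}\}$ and matching coefficients; and the raising relation is extracted from \Cref{c:third-relation} in the same way. The paper's proof is a two-line sketch of precisely these steps.

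Your caveat about the scalar in the raising relation is correct and worth recording. Expanding $\delta[Z(\lambda_1,\ldots,\lambda_{g+1})]$ via the standard identity $\delta(\cL^{s}W_i^\ell)=s(n-2\ell-s+1)\cL^{s-1}W_i^\ell$ with $s=g+1-\ell$ gives
\[
\Lambda P_i^{g-1,\ell}=(g-\ell)(n-\ell-g+1)\,P_i^{g,\ell},
\]
not equality on the nose, and your consistency check against $[\Lambda,\Delta]=H$ confirms this. The paper suppresses this Lefschetz constant here and in the subsequent Corollary ($P_i^{g,\ell}=\Lambda^{g-\ell}P_i^{\ell,\ell}$); what the downstream modularity argument actually needs is only the containment $P_i^{g,\ell}\in\Lambda^{g-\ell}\cF_{n,\ell}^{\mathrm{prim}}$, so the logic survives, but the explicit correction formulas should be read with this constant in place.
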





\begin{proof}
    (1) follows from the Lefschetz decomposition. For (2), the first follows from the relation: 
    \[\Delta_g[Z(\lambda_1,\ldots,\lambda_g)]=[Z(\lambda_1,\ldots,\lambda_{g-1})]\cup D_\fJ~,\]
    and the second follows from \Cref{c:third-relation}.
\end{proof}
\begin{corollary}
    We have $P^{g,\ell}_i=\Lambda^{g-\ell}P^{\ell,\ell}_i$,
    and $P^{\ell,\ell}_i$ is a pluriharmonic polynomial.
\end{corollary}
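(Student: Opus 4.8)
The plan is to read off both assertions directly from the two relations established in the theorem immediately preceding this corollary, by a short induction; essentially all the substance has already been done there. For the first claim I would argue by induction on $g \geq \ell$ using the raising relation $\Lambda P_i^{g-1,\ell} = P_i^{g,\ell}$. The base case $g = \ell$ is the tautology $P_i^{\ell,\ell} = \Lambda^0 P_i^{\ell,\ell}$, and the inductive step is immediate:
\[
P_i^{g,\ell} = \Lambda P_i^{g-1,\ell} = \Lambda\bigl(\Lambda^{g-1-\ell} P_i^{\ell,\ell}\bigr) = \Lambda^{g-\ell} P_i^{\ell,\ell}.
\]
This exhibits every $P_i^{g,\ell}$ as the image under a power of the raising operator $\Lambda$ of the bottom piece $P_i^{\ell,\ell}$, which is exactly the Lefschetz-type statement encoded in the fact that $u$ intertwines the two $\mathfrak{sl}_2$-actions.

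For the pluriharmonicity of $P_i^{\ell,\ell}$, I would observe that, by construction, $P_i^{\ell,\ell}$ is the coefficient of $[Z(\lambda_1,\dots,\lambda_\ell)]$ lying in the primitive summand $\cF_{n,\ell}^{\mathrm{prim}} = \mathrm{Ker}(\Delta_\ell)$ of the Lefschetz decomposition of $\cF_{n,\ell}$ from \Cref{proposition-sl2}. Hence $\Delta_\ell P_i^{\ell,\ell} = 0$. By \Cref{l:laplacian}(2) the single-variable Laplacians $\Delta_1,\dots,\Delta_\ell$ all coincide as maps $\cF_{n,\ell} \to \cF_{n,\ell-1}$, so $\Delta_s P_i^{\ell,\ell} = 0$ for every $1 \leq s \leq \ell$. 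Equivalently, $P_i^{\ell,\ell}$ is harmonic in each vector variable separately, i.e.\ pluriharmonic.

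There is no genuine obstacle here: the difficulty resides entirely in the preceding theorem, whose two commuting squares identify $\Delta$ with the dual Lefschetz operator $\cL^*$ and $\Lambda$ with $\delta^*$. The only point meriting care is that pluriharmonicity does \emph{not} follow from membership in $\cF_{n,\ell}$ by itself, but genuinely uses the primitivity of $P_i^{\ell,\ell}$ together with the coincidence of the various $\Delta_s$ supplied by \Cref{l:laplacian}(2).
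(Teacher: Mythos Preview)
Your argument is correct and is exactly the intended one: the paper states this corollary without proof because both claims follow immediately from the preceding theorem, with the first by iterating the raising relation $\Lambda P_i^{g-1,\ell}=P_i^{g,\ell}$ and the second from $P_i^{\ell,\ell}\in\cF_{n,\ell}^{\mathrm{prim}}$ together with \Cref{l:laplacian}(2). Your final remark, that pluriharmonicity requires primitivity and not merely membership in $\cF_{n,\ell}$, is the only point needing care and you have identified it correctly.
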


Now, let \[[\widetilde{Z}(\underline \lambda)]=[Z(\underline\lambda)]-\sum_{\ell=0}^{g-1}\sum_{i\in I_{\ell}}[\Lambda^{g-\ell}(P_i^{\ell,\ell})](\lambda_1,\ldots,\lambda_g)[W_{i}^{\ell}\cup D_{\fJ}^{g-\ell}]~.\]

We have thus proved the following theorem.
\begin{theorem}\label{explicit-main-theorem}
    The generating series of cohomology classes
    \[\sum_{\underline \lambda\in (M^\vee)^g}[\widetilde{Z}(\underline\lambda)] q^{h(\underline{\lambda})}\mathfrak e_{\underline{\lambda}}=\Phi_M^g-\sum_{\ell=0}^{g-1}\sum_{i\in I_\ell}\vartheta_{\Lambda^{g-\ell}P^{\ell,\ell}_i}~[W^\ell_i\cup D_{\fJ}^{g-\ell}]\]
    is a Hermitian modular form of weight $2+n$ with respect to the Weil representation $\rho_L$ of $U(g,g)(\Z)$~.
\end{theorem}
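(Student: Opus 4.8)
The plan is to show that the correction defining $[\widetilde Z(\underline\lambda)]$ removes every piece of the Lefschetz decomposition of the cycle class except the primitive top piece, and that the resulting series is the holomorphic theta series of a harmonic weighting polynomial, to which Shimura's criterion (\Cref{th:shimura}) applies directly. The point is that the subtraction has been engineered to coincide exactly with the projection onto the non-primitive Lefschetz components.

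First I would invoke the explicit decomposition obtained just before the statement. By \Cref{c:lef-decompo} together with the identification $P^{g,\ell}_i=\Lambda^{g-\ell}P^{\ell,\ell}_i$, where each $P^{\ell,\ell}_i\in\cF_{n,\ell}^{\mathrm{prim}}$ is pluriharmonic, the cycle class expands as
\[[Z(\lambda_1,\ldots,\lambda_g)]=\sum_{\ell=0}^{g}\sum_{i\in I_{\ell}}[\Lambda^{g-\ell}P^{\ell,\ell}_i](\underline\lambda)\,[W^{\ell}_i\cup D^{g-\ell}].\]
The correction subtracted in the definition of $[\widetilde Z(\underline\lambda)]$ is, term by term, precisely the sum over $0\le\ell\le g-1$, so only the $\ell=g$ summand survives:
\[[\widetilde Z(\underline\lambda)]=\sum_{i\in I_g}P^{g,g}_i(\underline\lambda)\,W^g_i,\qquad P^{g,g}_i\in\cF_{n,g}^{\mathrm{prim}}.\]

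Next I would sum over $\underline\lambda\in(M^\vee)^g$ and pull the constant cohomology classes $W^g_i$ outside, so that the generating series factors as
\[\sum_{\underline\lambda}[\widetilde Z(\underline\lambda)]\,q^{h(\underline\lambda)}\mathfrak e_{\underline\lambda}=\sum_{i\in I_g}\Bigl(\sum_{\underline\lambda}P^{g,g}_i(\underline\lambda)\,q^{h(\underline\lambda)}\mathfrak e_{\underline\lambda}\Bigr)\,W^g_i=\sum_{i\in I_g}\vartheta^+_{P^{g,g}_i}\,W^g_i,\]
the inner series being the holomorphic part of the Shimura theta series attached to $P^{g,g}_i$. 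The final step is to exploit harmonicity: since $\Delta P^{g,g}_i=0$ one has $\exp(-\Delta/4\pi)P^{g,g}_i=P^{g,g}_i$, while the homogeneity $P^{g,g}_i(\underline\lambda\cdot Y^{1/2})=\det(Y)\,P^{g,g}_i(\underline\lambda)$ coming from $P^{g,g}_i\in\cF_{n,g}$ cancels the factor $\det(Y)^{-1}$ in \Cref{th:shimura}. Hence $\vartheta_{P^{g,g}_i}=\vartheta^+_{P^{g,g}_i}$ is a genuine holomorphic Hermitian modular form of weight $2+n$ and representation $\rho_{M,g}$, and the $W^g_i$-linear combination is a cohomology-valued holomorphic form of the same type.

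Conceptually this is the specialization of \Cref{theorem:correction-general} to the $\mathfrak{sl}_2$-morphism $u$: the subtracted correction is the complement of the primitive isotypic projector $\Pi^*_{m,m}$ with $m=n-2g$, and the primitive vectors of $\cF_{n,\bullet}$ are exactly the harmonic polynomials. For this reason no essential difficulty remains at this stage: the real work has already been front-loaded into constructing the $\mathfrak{sl}_2$-action on $\cF_{n,\bullet}$, proving that $u$ is equivariant, and establishing \Cref{th:shimura}. The only points needing care are bookkeeping—verifying that the correction matches the lower Lefschetz summands one by one, and that the $\det(Y)$ factors cancel so that harmonicity yields exact holomorphy rather than merely a modular transformation law with a vanishing non-holomorphic completion.
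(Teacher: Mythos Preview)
Your proposal is correct and follows exactly the approach of the paper: the paper's own ``proof'' is simply the sentence ``We have thus proved the following theorem,'' meaning the result is read off directly from the preceding Lefschetz decomposition $[Z(\underline\lambda)]=\sum_{\ell,i}[\Lambda^{g-\ell}P^{\ell,\ell}_i](\underline\lambda)\,[W^\ell_i\cup D^{g-\ell}]$ together with the harmonicity of the $P^{g,g}_i$ and Shimura's criterion. Your write-up spells out precisely these steps, including the cancellation of the $\det(Y)$ factor that makes the completion trivially holomorphic, and your remark that this is an instance of \Cref{theorem:correction-general} applied to the $\mathfrak{sl}_2$-morphism $u$ is the same conceptual framing the paper sets up in \S2.5.
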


\section{Proofs of the Main Theorems}\label{s:main-results}

In this section, we prove \Cref{t:main-explicit-form} and \Cref{main-completion}. 
\medskip

Let $(L,h)$ be a Hermitian lattice of signature $(n+1,1)$ over $\cO_k$, the ring of integers of a quadratic imaginary field $k$ and let $X^{\tor}_\Gamma$ be the toroidal compactification of the unitary Shimura variety constructed in \Cref{s:shimura-varieties}. 
\medskip

Let $g\geq 1$, $N\in \mathrm{Herm}_{g}(k)_{\geq 0}$ and $\underline{\nu}\in (L^\vee/L)^g$. Let $\cZ(\underline{\nu},N)$ be the associated special cycle. We again denote its closure in $X^{\tor}_\Gamma$ by $\overline{\cZ}(\underline{\nu},N)$.



\medskip

For each $\fJ\subset L$ isotropic rank $1$ $\cO_k$-submodule, let $M=\fJ^{\bot}/\fJ$ and let $\cB_\fJ$ be the corresponding boundary divisor of $X^{\tor}_\Gamma$. By \Cref{l:limit-of-cycles}, the restriction of Kudla--Millson generating series to $\cB_\cJ$ is the generating series of the $Z(\underline{\lambda})$ cycles introduced in \Cref{subs:cycles-boundary}:
\[\Phi_M^g(\tau)=\sum_{\underline{\lambda}\in (M^\vee)^g}[Z(\underline{\lambda})]q^{h(\underline \lambda)}\mathfrak e_{\underline\lambda}~.\]

We now define the completions of the special cycles: For each $\fJ\subset L$ isotropic $\cO_k$-line, let $M=\fJ^\bot/\fJ$, which is a Hermitian lattice of signature $(n,0)$. For each $\underline{\nu}\in (M^\vee/M)^g$ and $N\in \mathrm{Herm}_{g}(k)_{>0}$, we define: 

\[P_i^{\ell}(\underline{\nu},N)=\sum_{\substack{ {\underline\lambda\in(M^\vee)^g}\\
[\underline \lambda]=\underline{\nu}\\
{(\lambda,\lambda) = N}}}
(\Lambda^{g-\ell}P_i^{\ell,\ell})(\lambda_1,\ldots,\lambda_g)~.\]

and 
\begin{align*}
[\varphi(\tau,\underline{\nu},N)]=\sum_{\underset{[\underline \lambda]=\underline{\nu}}{\underline\lambda\in(M^\vee)^g}}\sum_{\ell=1}^{g}\frac{(-1)^\ell}{(4\pi)^\ell(g-\ell)!} \Tr\left(\wedge^{\ell}(Y)\wedge^{g-\ell}([f(\underline{\lambda})]\right)\wedge D_{\fJ}^{\ell-1}~.
\end{align*}

Let $\iota_\fJ:\cB_\fJ\hookrightarrow  X_\Gamma^{\tor}$ be the inclusion of the boundary component and let $\iota_{\fJ,*}$ be the corresponding Gysin map on cohomology groups.

\begin{definition}\label{def:completed-cycle}
For each $\underline{\nu}\in (L^\vee/L)^g$, $N\in \mathrm{Herm}_{g}(k)_{\geq 0}$, and $Y\in \mathrm{Herm}_{g}(\C)_{> 0}$, we define:

\begin{enumerate}
\item the completed special cycle in $X_\Gamma^{\tor}$: choose a set $\cJ$ of representative of isotropic lines of $L$ under the action of $\Gamma$ and let 
  \[[\widehat{\cZ}(\underline{\nu},N,Y)]=[\overline{\cZ}(\underline{\nu},N)]-\sum_{\fJ\in\cJ}\frac{r_\fJ}{d_k}\iota_{\fJ,*}[\varphi(\tau,\underline{\nu},N)]~.\]

\item the corrected special cycle in $X_\Gamma^{\tor}$ as: 
  \[[\widetilde{\cZ}(\underline{\nu},N)]=[\overline{\cZ}(\underline{\nu},N)]+\sum_{\fJ\in \cJ}\frac{r_\fJ}{d_k}\sum_{ \underset{i\in I_{\ell}}{\ell=0}}^{g-1}P^\ell_i(\underline{\nu},N)\, \iota_{\fJ,*}[W^\ell_{i}\cup D_{\fJ}^{g-\ell-1}]~,\]
  \end{enumerate}

  where $-d_k$ is the discriminant of $k$ and $r_\fJ$ is the integer appearing in \Cref{boundary}.
\end{definition}

Let $\alpha \in \mathrm{H}_{2g}(X_\Gamma^{\tor})$, so we must prove that both pairings $(\widehat{\Phi}_L.\alpha)$ and $(\widetilde{\Phi}_L.\alpha)$ are Hermitian modular forms of weight $1+n$ and genus $g$.

By \Cref{greer-lemma}, every $\alpha\in \mathrm{H}_{2g}(X_\Gamma^{\tor})$ can be written as a sum $\alpha=\beta+\gamma$ where $\beta\in H_{2g}(X_\Gamma,\Q)$ and   $\gamma\in \mathrm{H}_{2g}(\partial(X_\Gamma^{\tor}),\Q)$. In particular, we have: 

\[\left(\widetilde \Phi.\alpha\right)=\left(\widetilde \Phi.\beta\right)+\left(\widetilde \Phi.\gamma\right)~.\]
For the intersection product with $\beta$,
\[\left(\widetilde \Phi.\beta\right)=\left(\Phi.\beta\right),\]
so we conclude that this term is a Hermitian modular form by the main theorem of Kudla and Millson; see \cite{kudla-millson}. As for $\gamma$, notice that the homology of the boundary decomposes as follows
\[H_{2g}(\partial(X_\Gamma^{\tor}),\Q)=\bigoplus _{\fJ\in \cJ}H_{2g}(\cB_\fJ,\Q)~,\]
and we may therefore assume that $\gamma\in H_{2g}(\cB_\fJ,\Q)$.

We have:
\[([\widehat{\cZ}(\underline{\nu},N,Y)].\gamma)_{X^{\tor}_{\Gamma}}=([\cZ(\underline{\nu},N)_{\fJ}].\gamma)_{\cB_\fJ}-\frac{r_\fJ}{d_k}\sum_{\ell=1}^g(\iota_{\fJ,*}[\varphi(\tau,\underline{\nu},N)\cup D_{\fJ}^{\ell-1}].\gamma)_{X_\Gamma^{\tor}}~,\]
where in the first expression, the intersection is taking place in the boundary divisor $\cB_\fJ$. For the second expression, we use Fulton's excess intersection formula and \Cref{c:relation-normal}:
\begin{align*}(\iota_{\fJ,*}[\varphi(\tau,\underline{\nu},N)\cup D_{\fJ}^{\ell-1}].\gamma)_{X_\Gamma^{\tor}}=&([\varphi(\tau,\underline{\nu},N)\cup D_{\fJ}^{\ell-1}\cup c_1(\cN_{\cB_\fJ})].\gamma)_{\cB_\fJ}\\
=&-\frac{d_k}{r_\fJ}([\varphi(\tau,\underline{\nu},N)\cup D_{\fJ}^{\ell}].\gamma)_{\cB_\fJ}
\end{align*}

By \Cref{t:completions-of-theta-series}, $([\widehat{\cZ}(\underline{\nu},N,Y)].\gamma)_{X^{\tor}_{\Gamma}}$ is the coefficient of a non-holomorphic Hermitian modular form, which concludes the proof of \Cref{main-completion}. The proof of \Cref{t:main-explicit-form} on corrected cycles is done similarly using \Cref{explicit-main-theorem}. 
\bibliographystyle{alpha}
\bibliography{UnitaryKudla}

\end{document}